\tikzset{neg/.style={
		decoration={markings,
			mark= at position 0.5 with {
				\node[transform shape] (tempnode) {$\setminus$};
			}
		},
		postaction={decorate}
}}
\numberwithin{equation}{section}
\newcommand{\MAT}{\left[ \begin{array}}  
	\newcommand{\mat}{\end{array} \right]}
\newtheorem{theorem}{Theorem}[section]
\newtheorem{corollary}[theorem]{Corollary}
\newtheorem{lemma}[theorem]{Lemma}
\theoremstyle{definition}
\newtheorem*{definition*}{Definition}
\newtheorem*{proposition*}{Proposition}
\newtheorem*{theorem*}{Theorem}
\newtheorem*{corollary*}{Corollary}
\newtheorem*{example*}{Example}
\newtheorem*{problem*}{Problem}
\theoremstyle{remark}
\newtheorem{remark}[theorem]{Remark}
\def \st {\operatorname*{s.t. }}
\def \R {\mathbb{R}}
\def \x {{\bf x}}
\def \y {{\bf y}}
\newcommand*{\stcomp}[1]{{#1}^{\mathsf{c}}}
\newcommand{\interior}[1]{%
	{\kern0pt#1}^{\mathrm{o}}%
}
\title{On Continuous Terminal Embeddings of Sets of Positive Reach}
\author{%
  Simone Brugiapaglia$^1$, Rafael Chiclana$^2$, Tim Hoheisel$^3$, Mark Iwen$^{4}$\\
   \\
   {\small $^1$Department of Mathematics and Statistics, Concordia University, \href{mailto:simone.brugiapaglia@concordia.ca}{simone.brugiapaglia@concordia.ca}.}\\
     { \small $^2$Department of Mathematics, Michigan State University, \href{mailto:chiclan1@msu.edu}{chiclan1@msu.edu}.}\\
 {\small $^3$Department of Mathematics and Statistics, McGill University, \href{mailto:tim.hoheisel@mcgill.ca}{tim.hoheisel@mcgill.ca}.}\\
 {\small $^4$Department of Mathematics, and Department of Computational Mathematics, Science}\\ {\small and Engineering (CMSE), Michigan State University, \href{mailto:iwenmark@msu.edu}{iwenmark@msu.edu}.}
}
\begin{document}

\maketitle

\begin{abstract}
    In this paper we prove the existence of H\"{o}lder continuous terminal embeddings of any desired $X \subseteq \mathbb{R}^d$ into $\mathbb{R}^{m}$ with $m=\mathcal{O}(\varepsilon^{-2}\omega(S_X)^2)$, for arbitrarily small distortion $\varepsilon$, where $\omega(S_X)$ denotes the Gaussian width of the unit secants of $X$. More specifically, when $X$ is a finite set we provide terminal embeddings that are locally $\frac{1}{2}$-H\"{o}lder almost everywhere, and when $X$ is infinite with positive reach we give terminal embeddings that are locally $\frac{1}{4}$-H\"{o}lder everywhere sufficiently close to $X$ (i.e., within all tubes around $X$ of radius less than $X$'s reach). When $X$ is a compact $d$-dimensional submanifold of $\mathbb{R}^N$, an application of our main results provides terminal embeddings into $\tilde{\mathcal{O}}(d)$-dimensional space that are locally H\"{o}lder everywhere sufficiently close to the manifold. 
\end{abstract}

	\section{Introduction}\label{sec 1}
	

Let $X \subseteq \mathbb{R}^d$ and $\varepsilon \in (0,1)$. A bi-Lipschitz function $f: X \rightarrow \mathbb{R}^m$ satisfying 
\begin{equation}
(1 - \varepsilon) \| { x}- { y} \| \leq \| f( { x}) - f({ y}) \| \leq (1 + \varepsilon) \| { x}- { y} \| \quad \forall \, x, y \in X
\label{equ:JLembedding}
\end{equation}
is called an {\it (Euclidean) embedding of $X$ into $\mathbb{R}^m$ with distortion $\varepsilon$}, where $m$ is referred to as the {\it embedding dimension}. Over the last two decades such embeddings have been considered/utilized in tens of thousand of publications in areas ranging from computer science (where $X$ is often a finite point set) to randomized numerical linear algebra (where $X$ is often a low-dimensional subspace) to engineering applications (where $X$ is often the set of all vectors in $\mathbb{R}^d$ having at most $s \ll d$ nonzero entries).  In all of these areas the popularity of embeddings with distortion $\varepsilon$ is largely due to the Johnson-Lindenstrauss (JL) lemma \cite{johnson1984extensions} and its subsequent simplifications (see, e.g., \cite{DG2003Elementary,A2001Database}) which, together with the later development of compressive sensing 
\cite{FH2013Mathematical}, showed that random matrices can easily provide such embeddings for $m$ very small.  For example, if one simply sets $f({ x}) = \Pi { x}$ where $\Pi \in \mathbb{R}^{m \times d}$ has suitably normalized and independent subgaussian (e.g., Gaussian) entries, then $f$ will satisfy \eqref{equ:JLembedding} for an arbitrary finite set $X$ with high probability provided that $m \geq c \log(|X|)/ \varepsilon^2$, where $c>0$ is a universal constant \cite{vershynin2018high-dimensional}. 

Though it is certainly fantastic that a random linear embedding $f = \Pi$ can be used to satisfy  \eqref{equ:JLembedding}, human nature led researchers to ask for even more almost immediately.  One direction of inquiry involved exploring whether the embedding dimensions $m$ achievable by random matrices might be further reduced by using other (e.g., nonlinear) embedding functions $f$ instead.  This leads to several interesting (and still not entirely resolved) questions, including:  What is the smallest achievable embedding dimension $m$ such that $\exists\, f: X \rightarrow \mathbb{R}^m$ satisfying \eqref{equ:JLembedding} for a given $X \subseteq \mathbb{R}^d$ of interest? Given $X \subseteq \mathbb{R}^d$ and $\varepsilon \in (0,1)$, how does the smallest achievable embedding dimension $m$ attainable by a {\it linear} embedding $f = \Pi$, where $\Pi \in \mathbb{R}^{m \times d}$, compare to the smallest embedding dimension achievable by {\it any} (e.g., potentially nonlinear) function $f$?  And, more specifically, for what $X \subseteq \mathbb{R}^m$ and $\varepsilon \in (0,1)$ pairs does the Johnson–Lindenstrauss lemma nearly achieve the smallest possible embedding dimension obtainable by any $f: X \rightarrow \mathbb{R}^m$ with high probability (w.h.p.) using a {\it random linear} embedding?

Work on answering these questions includes, e.g., 
\cite{larsen2017optimality} in which Larson and Nelson construct a general class of finite sets $X \subseteq \mathbb{R}^d$ such that {\it any} function $f: X \rightarrow \mathbb{R}^m$ satisfying \eqref{equ:JLembedding} must have $m > c \log(|X|) / \varepsilon^2$ for all $\varepsilon$ not too small.  When combined with known upper bounds on $m$ achievable via random matrices this establishes the JL lemma as ``near-optimal'' even when compared to best-possible nonlinear embeddings, at least for some worst-case finite sets.  Other work in this area showed that random matrices are also near-optimal w.h.p.\ compared to the best possible {\it linear} embeddings $f = \Pi$ satisfying \eqref{equ:JLembedding} when $X$ is chosen to be the set of all $s$-sparse vectors in $\mathbb{R}^d$ \cite{FH2013Mathematical}.  In \cite{iwen2023lower} these prior results where then generalized by noting that any (even nonlinear) $f: X \rightarrow \mathbb{R}^m$ satisfying \eqref{equ:JLembedding} must have $m$ scale like the squared Gaussian
width of $X$ for all $X \subseteq \mathbb{R}^d$, after which specific choices of $X$ both $(i)$ reproduce \cite{larsen2017optimality} for some ranges of $\varepsilon$ and $(ii)$ show that w.h.p.\ random matrices actually embed with distortion $\varepsilon$ all $s$-sparse vectors in $\mathbb{R}^d$ into $\mathbb{R}^m$ with an embedding dimension $m$ that is nearly as small as any linear {\it or nonlinear} function $f$ can achieve.  In addition, \cite{iwen2023lower} also demonstrates the existence of a general class of low-dimensional submanifolds of $\mathbb{R}^d$ which random matrices embed with distortion $\varepsilon$ w.h.p.\ into $\mathbb{R}^m$ with $m$ nearly as small as any $f$ can achieve (even a nonlinear one).  All together, \cite{iwen2023lower} thereby shows that {\it linear} embeddings perform as well as nonlinear ones in a large range of interesting situations, thereby demonstrating that restricting $f$ to be linear is often much less limiting with respect to minimizing embedding dimensions $m$ than one might initially expect.  In short, it appears as if there is often surprisingly little to gain with respect to embeddings with distortion $\varepsilon$ by allowing $f$ to be nonlinear.
 

In light of the previous discussion one might reasonably ask what undeniable benefits a nonlinear embedding can provide that a linear one simply cannot.  So-called terminal embeddings, first proposed in the computer science literature by Elkin, Filtser, and Neiman \cite{elkin2017terminal}, provide an answer.  Let $X \subseteq \mathbb{R}^d$ and $\varepsilon \in (0,1)$. A  function $f: \mathbb{R}^d \rightarrow \mathbb{R}^m$ satisfying 
\begin{equation}\label{eq: terminal condition} (1-\varepsilon)\|x-y\| \leq \|f(x)-f(y)\| \leq (1+\varepsilon)\|x-y\| \quad \forall \, x \in X \quad \forall \, y \in \mathbb{R}^d
\end{equation}
is called an {\it terminal embedding of $X$ into $\mathbb{R}^m$ with distortion $\varepsilon$}, where $m$ is again referred to as the {\it embedding dimension}.  {\bf Note here -- crucially -- that \eqref{eq: terminal condition} must hold for all} $\boldsymbol y \in\mathbb{R}^{\boldsymbol d}$.  It is a simple exercise to see that any $f$ satisfying this condition for any nonempty $X$ cannot possibly be linear unless $m = d$ (see, e.g., the remark following \cite[Theorem 1.2]{narayanan2019optimal}).  Nonetheless, a series of works \cite{elkin2017terminal,mahabadi2018nonlinear,narayanan2019optimal} somewhat recently established the existence of such nonlinear functions $f$ for arbitrary {\it finite} sets $X$.  Better yet, they can still achieve near-optimal embedding dimensions $m = c \log(|X|)/\varepsilon^2$ \cite{narayanan2019optimal}.  Even more recent work in this direction \cite{chiclana2024on} further generalized these terminal embedding results by showing that, in fact, terminal embeddings with near-optimal embedding dimensions also exist for arbitrary (e.g., infinite) subsets $X \subseteq \mathbb{R}^d$, with special attention given to the case where $X$ is a low-dimensionsal submanifold of $\mathbb{R}^d$.  In addition, \cite{chiclana2024on} also explores potential data science applications of these embeddings, empirically demonstrating, e.g., that they can significantly outperform standard JL-embeddings in compressive classification tasks.  

As mentioned above, one cannot expect terminal embeddings to be linear. A natural question is whether they can nevertheless be constructed with some form of global regularity (e.g., continuity). This is not immediate, as existing constructions achieving near-optimal embedding dimensions \cite{narayanan2019optimal,chiclana2024on} are defined point-by-point via optimization, a process that does not, in general, preserve regularity. In this work, we show that a suitable modification of this approach yields terminal embeddings with near-optimal embedding dimensions that satisfy H\"older continuity.

\subsection{Motivation for regularity}

While terminal embeddings are primarily designed to preserve distances to a fixed dataset $X$, additional regularity properties can play an important role both theoretically and algorithmically. We highlight several motivations for studying regular terminal embeddings:

\begin{itemize}
	\item \textbf{Algorithmic efficiency.}
	The construction of terminal embeddings is inherently pointwise and typically involves solving an optimization problem for each query point. Continuity suggests that nearby points have similar feasible sets and therefore similar minimizers. As a result, the solution corresponding to a point $u$ can be used to initialize the optimization problem at a nearby point $v$, leading to effective warm-start strategies. This is particularly relevant in iterative or streaming settings, where queries are processed sequentially and tend to be correlated. In such cases, continuity can significantly reduce the number of iterations required for convergence and improve overall computational efficiency.
	
	\item \textbf{Global construction via interpolation.}
	Continuity allows one to compute the embedding on a discrete net and extend it to the entire space via interpolation. More precisely, one can evaluate the embedding on a sufficiently fine $\delta$-net and use interpolation schemes (e.g., piecewise linear or Lipschitz extensions) to define $f$ elsewhere. The regularity of the map ensures that the interpolation error can be controlled in terms of $\delta$, yielding a global construction from finitely many evaluations. This provides a practical way to approximate the embedding over large domains without solving the underlying optimization problem at every point.
	
	\item \textbf{Stability and robustness.}
	Continuity guarantees that small perturbations in the input $u$ lead to small changes in $f(u)$. This is particularly relevant in applications where data is noisy or subject to numerical errors, as discontinuities can lead to unstable behavior. Moreover, quantitative regularity such as H\"older continuity provides explicit control on how rapidly the embedding can change, ensuring that small perturbations in the input do not lead to abrupt variations in the output.
	
	\item \textbf{Compatibility with geometric structure.}
	In many settings of interest, the dataset $X$ has additional geometric properties (e.g., manifold structure). Continuity ensures that the embedding extends in a coherent way to neighborhoods of $X$, preserving local geometric features rather than behaving in a purely pointwise manner.
\end{itemize}
 
\subsection{Contributions}
	
In this work, we exploit previous construction techniques to produce terminal embeddings with desirable regularity properties, namely H\"{o}lder continuity. Our first main result considers \emph{finite} sets and provides optimal dimensionality reduction through terminal embeddings that are locally $\frac{1}{2}$-H\"{o}lder continuous almost everywhere.
	
	\begin{theorem}[Finite case]\label{theo: main finite}
		Let  $X=\{x_1,\ldots,x_n\}\subseteq\mathbb{R}^d$ and $\varepsilon \in (0,1)$. Then, there exists   $m=\mathcal{O}(\varepsilon^{-2}\log n)$ and a terminal embedding $f\colon \mathbb{R}^d \longrightarrow \mathbb{R}^{m+1}$ with distortion  $\varepsilon$ such that $f$ is locally $\frac{1}{2}$-H\"{o}lder almost everywhere, i.e.,  for almost every $u \in \mathbb{R}^d$, there exists a neighborhood $U$ of $u$ and $D_u>0$ such that
		\[ \|f(v)-f(w)\| \leq D_u \|v-w\|^{1/2} \quad \forall \, v, w \in U.\]
	\end{theorem}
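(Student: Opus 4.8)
My plan is to start from the explicit terminal‑embedding construction of \cite{narayanan2019optimal} (revisited in \cite{chiclana2024on}) and to observe that it is \emph{semialgebraic} except for a single square root on its last coordinate; the H\"older estimate then follows from generic smoothness of semialgebraic maps together with the elementary fact that $\sqrt{\,\cdot\,}$ composed with a locally Lipschitz nonnegative function is locally $\tfrac12$‑H\"older.

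First I would recall the \emph{shape} of the construction. Applying the Johnson--Lindenstrauss lemma to the $\binom{n}{2}$ secants $\{x_i-x_j\}_{i\ne j}$ yields a linear map $\Pi\in\mathbb{R}^{m\times d}$ with $m=\mathcal{O}(\varepsilon^{-2}\log n)$ and $(1-\varepsilon)\|x_i-x_j\|\le\|\Pi(x_i-x_j)\|\le(1+\varepsilon)\|x_i-x_j\|$ for all $i,j$; after a harmless rescaling we may assume $\Pi$ contracts $X-X$. The terminal embedding is then defined pointwise by $f(q)=\bigl(g(q),\sqrt{\Phi(q)}\bigr)\in\mathbb{R}^{m}\times\mathbb{R}=\mathbb{R}^{m+1}$, where $g(q)\in\mathbb{R}^m$ is the (single‑valued) solution of a convex program whose data are polynomial in $(q,\Pi x_1,\dots,\Pi x_n)$ --- e.g.\ the metric projection of $\Pi q$ onto the convex body $K(q)=\bigcap_{i=1}^n \overline{B}\bigl(\Pi x_i,\|q-x_i\|\bigr)$, which is nonempty by \cite{narayanan2019optimal} --- and $\Phi(q)=\max_{i\in[n]}\bigl(\|q-x_i\|^2-\|g(q)-\Pi x_i\|^2\bigr)\ge 0$, the nonnegativity coming from $g(q)\in K(q)$. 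That this $f$ is a terminal embedding of $X$ into $\mathbb{R}^{m+1}$ with distortion $\mathcal{O}(\varepsilon)$ (and that $K(q)\ne\varnothing$ for every $q$) is exactly the content of \cite{narayanan2019optimal}; running the construction at a fixed constant multiple of $\varepsilon$ gives distortion $\varepsilon$ with the stated $m$, so I take these as given. For the regularity claim the precise optimization rule is irrelevant: all I use is that $q\mapsto g(q)$ and $q\mapsto\Phi(q)$ are \emph{semialgebraic} functions of $q$ --- they are built from linear maps, Euclidean norms, finitely many maxima, and convex programs with semialgebraic data, whose optimal‑solution maps are semialgebraic by the Tarski--Seidenberg theorem (replacing $g(q)$ by the minimal‑norm optimal solution if uniqueness is not automatic, which is still semialgebraic and still yields a valid terminal embedding) --- and that the last coordinate is $\sqrt{\,\cdot\,}\circ\Phi$ with $\Phi\ge 0$.

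The argument then has three short steps. (i) By the $C^\infty$ cell‑decomposition theorem for semialgebraic sets, $g$ is smooth, hence locally Lipschitz, on $\mathbb{R}^d\setminus\mathcal{Z}$ for a closed semialgebraic set $\mathcal{Z}$ with $\dim\mathcal{Z}<d$; such a set is Lebesgue‑null. (ii) Fix $u\notin\mathcal{Z}$ and a bounded open neighborhood $U\ni u$ with $\overline{U}\subseteq\mathbb{R}^d\setminus\mathcal{Z}$ on which $g$ is $L$‑Lipschitz; then each $\psi_i(q):=\|q-x_i\|^2-\|g(q)-\Pi x_i\|^2$ is a composition of $g$ with a smooth map on the bounded set $U$, hence Lipschitz on $U$, so $\Phi=\max_{i\in[n]}\psi_i$ is $L'$‑Lipschitz on $U$. (iii) Using $|\sqrt{a}-\sqrt{b}|\le\sqrt{|a-b|}$ for $a,b\ge 0$ together with $\Phi\ge 0$,
\[
\bigl|\sqrt{\Phi(v)}-\sqrt{\Phi(w)}\bigr|\le\sqrt{|\Phi(v)-\Phi(w)|}\le\sqrt{L'}\,\|v-w\|^{1/2}\qquad(v,w\in U),
\]
so the last coordinate is $\tfrac12$‑H\"older on $U$; combined with the $L$‑Lipschitz (hence, on the bounded set $U$, $\tfrac12$‑H\"older) bound on $g$ this gives $\|f(v)-f(w)\|\le D_u\|v-w\|^{1/2}$ for all $v,w\in U$ with $D_u=L(\operatorname{diam}U)^{1/2}+\sqrt{L'}$. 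Since this holds for every $u$ outside the null set $\mathcal{Z}$, the theorem follows. Note that the feared square‑root singularity of the last coordinate causes no trouble: wherever $\Phi$ is locally Lipschitz --- including on $\Phi^{-1}(0)$ --- $\sqrt{\Phi}$ is automatically locally $\tfrac12$‑H\"older, so the ``almost everywhere'' restriction comes \emph{only} from the combinatorial switching in the optimization defining $g$.

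The main obstacle is step (i): one must make sure the chosen terminal embedding is genuinely semialgebraic, which forces a canonical, single‑valued rule for $g(q)$ (the optimization in \cite{narayanan2019optimal} need not have a unique minimizer) and then a routine appeal to generic smoothness/stratification of semialgebraic maps. A self‑contained alternative avoiding semialgebraic geometry is a hands‑on sensitivity analysis: partition $\mathbb{R}^d$ according to the active‑constraint set of the convex program defining $g(q)$; on the interior of each piece the Karush--Kuhn--Tucker system is a finite system of analytic equations to which the implicit function theorem applies away from a proper analytic degeneracy locus, giving local analyticity of $g$ there, while the finite union of piece‑boundaries and degeneracy loci is Lebesgue‑null. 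Either way, the only genuinely nontrivial input is isolating the square‑root structure of the last coordinate; one should also separately recall from \cite{narayanan2019optimal} the two facts used as black boxes above, namely that $K(q)\ne\varnothing$ for all $q$ (where the one‑sided embedding property of $\Pi$ on the full secant set enters) and that the resulting $f$ satisfies \eqref{eq: terminal condition} with the claimed distortion.
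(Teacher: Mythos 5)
Your argument is correct in outline but follows a genuinely different route from the paper's. The paper works directly with the optimization problem \eqref{optimization problem finite} defining $\alpha(u)$ as the minimal-norm point of the polyhedral feasible set $F_u$, and establishes local Lipschitz continuity of $\alpha$ on each open Voronoi cell $V_k\setminus\{x_k\}$ via parametric optimization: after dropping the two vanishing constraints $g_k,g_{n+k}$, Slater's condition (hence MFCQ) holds by Lemma~\ref{lem:u'}, CRCQ holds automatically because the remaining constraints are affine, and \cite[Theorem 4.7.5]{facchinei2003finite} then yields piecewise-$C^1$ dependence of the projection of $0$ onto the moving polyhedron. The exceptional null set is pinned down explicitly as the union of the Voronoi-cell boundaries (a finite union of hyperplanes) together with $X$. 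Your proposal instead declares the solution map semialgebraic (Tarski--Seidenberg applied to the graph of the minimal-norm optimizer) and invokes $C^\infty$ cell decomposition to obtain smoothness off a lower-dimensional semialgebraic stratum; the final passage from Lipschitz $\Phi$ to $\tfrac12$-H\"older $\sqrt{\Phi}$ via $|\sqrt a-\sqrt b|\le\sqrt{|a-b|}$ is exactly the paper's last step. Your route is shorter and more structural, but non-constructive: it does not identify the bad set or produce explicit local constants, whereas the paper's argument yields both and shows the sharper fact that regularity holds at every $u\notin X$ having a unique nearest neighbor in $X$. Two cautions. First, your displayed construction --- $g(q)$ the projection of $\Pi q$ onto $K(q)=\bigcap_i\overline{B}(\Pi x_i,\|q-x_i\|)$ and $\Phi(q)=\max_i\bigl(\|q-x_i\|^2-\|g(q)-\Pi x_i\|^2\bigr)$ --- is not the one the paper proves to be a terminal embedding (that one is $u\mapsto\bigl(\Pi u_{NN}+\alpha(u),\sqrt{\|u-u_{NN}\|^2-\|\alpha(u)\|^2}\bigr)$); if you take the terminal property as a black box you must make sure the citation really covers the variant you wrote down, or switch to the paper's formula, which is equally semialgebraic. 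Second, your ``hands-on alternative'' via the KKT system and the implicit function theorem is not routine, because the KKT conditions are a complementarity system rather than a smooth one; this is precisely why the paper invokes the Facchinei--Pang stability theorem under MFCQ and CRCQ rather than the implicit function theorem.
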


	\begin{remark}
		Regularity in Theorem \ref{theo: main finite} is achieved for any point $u \in \mathbb{R}^d \setminus X$ for which there is a unique orthogonal projection to $X$. In other words, if $u \in \mathbb{R}^d \setminus X$ has a unique nearest neighbor in $X$, then the local $\frac{1}{2}$-H\"{o}lder regularity holds at $u$. 
		On the other hand, at points where the nearest neighbor is not unique, our construction does not, in general, guarantee continuity. This reflects the fact that the embedding is defined through a choice of representative point in $X$, which may change discontinuously when the nearest neighbor is not unique.
	\end{remark}

	
	Given a set $X \subseteq \mathbb{R}^d$, the {\em reach} \cite{federer1959curvature}  of $X$ measures how far away points can be from $X$, while still having a unique closest point in its closure $\overline{X}$ . Formally, the \textit{reach} of $X$, denoted $\tau_X$, is defined as 
	\begin{equation}
        \label{eq:def_reach}
	\tau_X := \sup \left\{ t \geq 0  \colon \, \forall \, x \in \mathbb{R}^d \text{ such that } d(x,X) < t, \, x \text{ has a unique closest point in } \overline{X} \right\},
	\end{equation}
	where $d(x,X)$ denotes the Euclidean distance of a point $x\in \mathbb{R}^d$ from a set $X \subseteq \mathbb{R}^d$. See Figure~\ref{fig:reach} for an illustration.
 \begin{figure}
     \centering
     \includegraphics[width=0.4\linewidth]{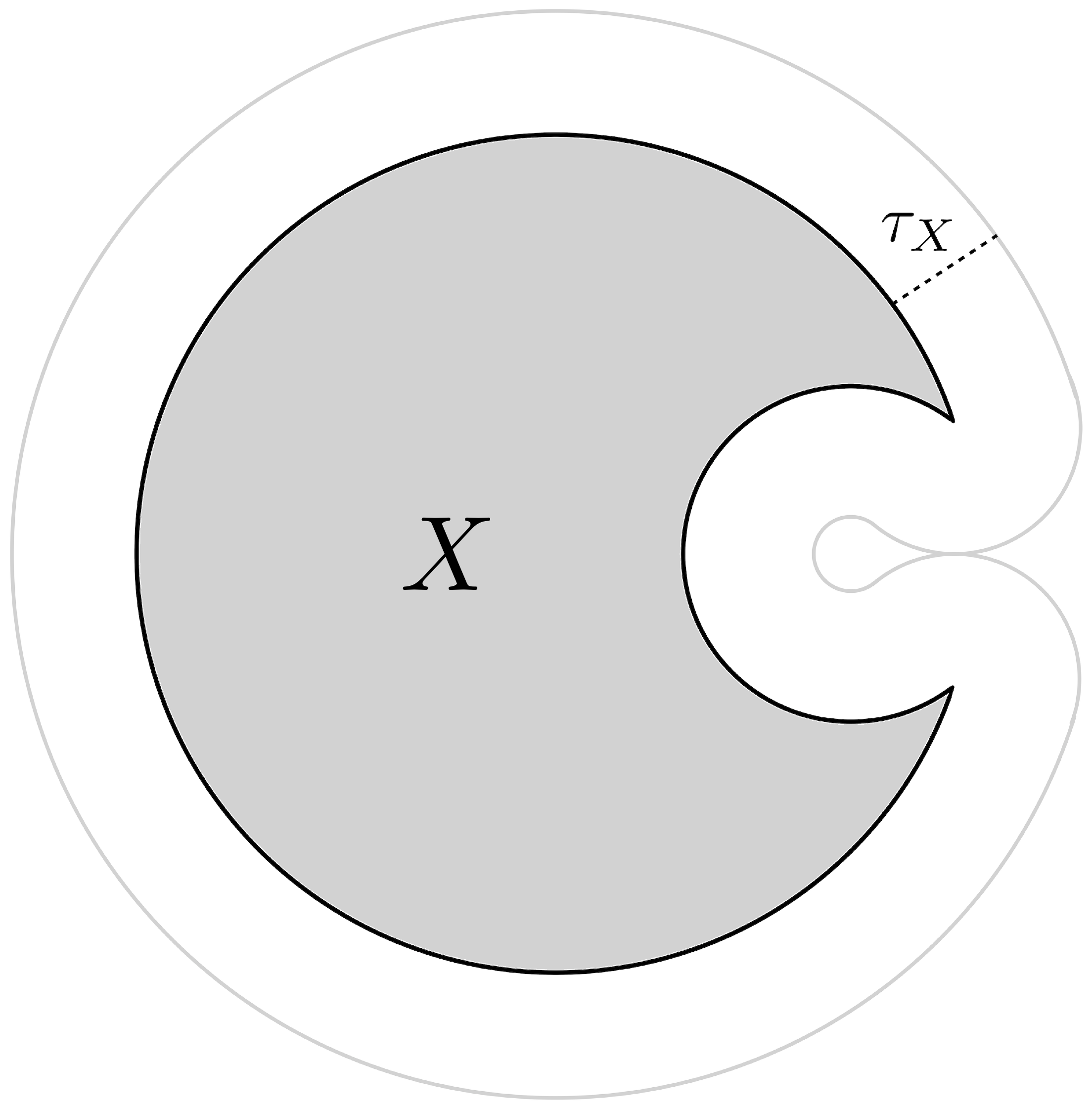}
     \caption{A nonconvex set $X \subseteq \mathbb{R}^2$ and its reach $\tau_X$, defined in \eqref{eq:def_reach}.}
     \label{fig:reach}
 \end{figure}
 We note that the sets in $\R^d$  with reach $+\infty$ are exactly the convex sets see, e.g., \cite[Corollary 21.13]{BaC11}.

	For infinite sets $X$, the optimal embedding dimension of a terminal embedding is determined by the Gaussian width of the \textit{unit secants of $X$}, which is the set defined by
	\begin{equation}\label{eq: S_X} S_X=\overline{\left \{\frac{x-y}{\|x-y\|} \colon x \neq y \in X \right \}}\subseteq S^{d-1},
	\end{equation}
	where $S^{d-1}$ denotes the unit sphere of $\mathbb{R}^d$. 
	Recall that the \textit{Gaussian width} of $S_X$ is  
	\begin{align}\label{eq:GWidth}
		w(S_X) := \mathbb{E} \sup_{x \in S_X} \, \langle g,x \rangle,
	\end{align}
	where $g$ is a random vector with $d$ independent entries following a standard normal distribution. The Gaussian width is considered one of the basic geometric quantities associated with subsets $X$ of $\mathbb{R}^d$, such as volume and surface area. Indeed, this quantity plays a central role in high-dimensional probability and its applications. For a nice introduction to the notion of Gaussian width we refer the interested reader to \cite[Chapter 7]{vershynin2018high-dimensional}.
	
	Our second main result holds for any (possibly, infinite) subset $X$ of $\mathbb{R}^d$ with positive reach $\tau_X>0$, and provides optimal dimensionality reduction through terminal embeddings that are $\frac{1}{4}$-H\"{o}lder continuous for every point within the reach of $X$. Given $x \in \mathbb{R}^d$, and $r>0$, let $B(x,r)$ denote the open ball with center $x$ and radius $r$.

	\begin{theorem}[General case]\label{theo: main infinite}
		Let $X\subseteq\mathbb{R}^d$ with reach $\tau_X>0$ and $\varepsilon \in (0,1)$. Then there exists $m=\mathcal{O}(\varepsilon^{-2}\omega(S_X)^2)$ and a terminal embedding $f\colon \mathbb{R}^d \longrightarrow \mathbb{R}^{m+1}$ with distortion $\varepsilon$ such that $f$ is locally $\frac{1}{4}$-H\"{o}lder on $X+B(0,\tau_X/2)$, i.e., for every $u \in \mathbb{R}^d$ with $d(u,X)<\tau_X/2$ there is a neighborhood $U$ of $u$ and  $C_u>0$ such that
		\[ \|f(v)-f(w)\| \leq C_u \|v-w\|^{1/4} \quad \forall \, v, w \in U,\]
		where $C_u>0$ is a constant that only depends on the distance between $u$ and the set $X$ and on $\varepsilon$.
	\end{theorem}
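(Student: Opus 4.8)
\emph{Proof strategy.} The plan is to feed a Gordon-type random linear map into the point-by-point terminal-embedding construction of \cite{chiclana2024on}, and then to establish the regularity claim by propagating Lipschitz/H\"older estimates through that construction with the help of the metric-projection theory for sets of positive reach. First, by Gordon's ``escape through a mesh'' theorem (equivalently, the matrix deviation inequality), for $m=\mathcal{O}(\varepsilon^{-2}\omega(S_X)^2)$ there is a (suitably scaled Gaussian) matrix $\Pi\in\mathbb{R}^{m\times d}$ with $\big|\,\|\Pi s\|-1\,\big|\le\varepsilon/c$ for every unit secant $s\in S_X=S_{\overline X}$; in particular $\Pi$ restricts to a distortion-$\mathcal{O}(\varepsilon)$ Euclidean embedding of $\overline X$.

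\emph{The terminal extension and its shape.} Next, recall from \cite{chiclana2024on} that this $\Pi$ can be completed to a terminal embedding $f=(g,t)\colon\mathbb{R}^d\to\mathbb{R}^m\times\mathbb{R}$ of $X$ with distortion $\varepsilon$ and embedding dimension $m+1=\mathcal{O}(\varepsilon^{-2}\omega(S_X)^2)$ --- this already proves every assertion of the theorem except the H\"older regularity. The structure that matters here is that $g(u)$ solves a convex minimax problem forcing $\|g(u)-\Pi x\|\le(1+\mathcal{O}(\varepsilon))\|u-x\|$ for all $x\in X$ together with a near-tightness condition, while $t(u)=\sqrt{\alpha(u)}$ with $\alpha(u):=\inf_{x\in X}\big(\|u-x\|^2-\|g(u)-\Pi x\|^2\big)\ge 0$ (or a close variant). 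For $u$ in the tube, let $p(u)$ be the unique metric projection of $u$ onto $\overline X$ and $r(u)=d(u,X)=\|u-p(u)\|$. The hypothesis $\tau_X>0$ enters through the inequality $\langle u-p(u),\,x-p(u)\rangle\le\|u-p(u)\|\,\|x-p(u)\|^2/(2\tau_X)$, valid for every $x\in\overline X$ because $u-p(u)$ lies in the normal cone of $\overline X$ at $p(u)$; this keeps $\alpha(u)$ from dropping far below $r(u)^2$ up to $\mathcal{O}(\varepsilon)$ factors and, crucially, localizes the defining infimum to those $x$ with $\|x-p(u)\|\lesssim\sqrt{r(u)\tau_X}$.

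\emph{Regularity on the tube.} Now fix $u$ with $\delta:=d(u,X)<\tau_X/2$ and work on a small ball $U\ni u$ contained in the open tube $X+B(0,\tau_X/2)$. By Federer's theory the metric projection onto $\overline X$ is Lipschitz on $U$ with constant controlled by $\tau_X/(\tau_X-d(u,X))$, so $u\mapsto u-p(u)$, $u\mapsto r(u)$ and $u\mapsto\Pi p(u)$ are Lipschitz on $U$ with constants depending only on $\delta$, $\tau_X$ and $\|\Pi\|$. Two further steps finish the proof: (i) a perturbation analysis of the minimax problem defining $g$, using the Lipschitz dependence of its data on $u$ and the reach-based localization of the near-active constraints, shows that $g$ is locally $\tfrac12$-H\"older on $U$ (in fact Lipschitz away from $X$, the loss occurring only as $r(u)\to 0$, where the minimax ``center'' becomes unstable); and (ii) since $\alpha$ is then, by the same localization, an infimum over a bounded family of functions that are $\tfrac12$-H\"older in $u$ with controlled moduli, $\alpha$ is locally $\tfrac12$-H\"older on $U$, and hence $t=\sqrt{\alpha}$ is $\tfrac14$-H\"older on $U$. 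Combining (i) and (ii), $f=(g,t)$ is $\tfrac14$-H\"older on $U$ with a constant depending only on $d(u,X)$ and $\varepsilon$ (through $\|\Pi\|$, $\tau_X$, and the projection's Lipschitz constant), and a routine covering argument extends this to all of $X+B(0,\tau_X/2)$.

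\emph{Main obstacle.} The crux is steps (i)--(ii): neither $g$ nor $t$ is given in closed form, and as $u\to X$ the normal direction $(u-p(u))/\|u-p(u)\|$ and the family of near-active constraints degenerate, so one cannot simply differentiate. The positive-reach hypothesis is exactly the tool that tames this --- it makes the metric projection Lipschitz on tubes and confines the effective range of $x$ to a $\sqrt{r(u)\tau_X}$-ball around $p(u)$ --- but carrying out the perturbation bounds carefully enough to certify the local $\tfrac12$-H\"older modulus of $g$ and of $\alpha$ (so that the extra square root in $t$ yields exactly exponent $\tfrac14$), and to see that the resulting constant depends on $u$ only through $d(u,X)$ and $\varepsilon$, is where the real work lies; the exponent $\tfrac14$ itself reflects the compounding of the two square-root operations built into the construction.
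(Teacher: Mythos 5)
Your high-level outline matches the paper's: matrix deviation inequality for the embedding dimension, the point-by-point construction from \cite{chiclana2024on}, Federer's Lipschitz estimate for the metric projection onto $\overline X$ on the tube, and then a two-stage passage from $\tfrac12$-H\"older regularity of the ``$u'$'' component to $\tfrac14$-H\"older regularity of $f$. But you explicitly flag steps (i)--(ii) --- the $\tfrac12$-H\"older estimate for the minimizing component --- as ``where the real work lies,'' and you do not carry them out; what you offer instead is a plan built on two devices (a reach-based ``localization of the near-active constraints'' and an unspecified ``perturbation analysis of the minimax problem'') that are not obviously correct and are not what the paper does.

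The concrete gap is as follows. The paper does not define $u'$ via a minimax problem over all $x\in X$, nor does it localize the constraints using the normal-cone inequality $\langle u-p(u),\,x-p(u)\rangle\le \tfrac{\|u-p(u)\|}{2\tau_X}\|x-p(u)\|^2$. Instead it (1) replaces the infinite family of angle constraints with a \emph{finite} family obtained from an $\varepsilon/40$-cover of $S_X$ (Lemma~\ref{lem: u' for cover}), making the feasible set $\widetilde F_u$ a polyhedron defined by the affine maps $\tilde g_i(\cdot,u)$; (2) chooses $u'=\beta(u)$ as the minimal-norm point of that polyhedron; (3) shows each $\tilde g_i(z,\cdot)$ is Lipschitz in $u$ on the tube with constant $4$, via \eqref{eq: u_NN Lipschitz} (Lemma~\ref{lem: Delta}); (4) produces a uniformly interior point $z^*$ with slack $-\tfrac{\varepsilon}{60}\|u-u_{NN}\|$ in every constraint (Lemma~\ref{lem: z*}); and (5) constructs the competitor $z_v=(1-\delta_v)\beta(u)+\delta_v z^*$, verifies $z_v\in\widetilde F_v$ (Lemma~\ref{lem: z in F_v}), controls $\big|\|z_v\|-\|\beta(v)\|\big|$ (Lemma~\ref{lem: z_v small}), and converts this into a bound on $\|z_v-\beta(v)\|$ by a purely geometric ball--halfspace estimate (Lemma~\ref{lem: circle}). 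That last conversion is where the $\tfrac12$ power actually comes from: a chord of a ball of radius $\rho$ contained in a supporting halfspace of an interior point $v$ has length $\le\sqrt{\rho^2-\|v\|^2}$, so an $O(\|u-v\|)$ perturbation of the radius yields only an $O(\|u-v\|^{1/2})$ displacement of the projection. Your sketch does not contain any of (1), (4), or (5), and (4)--(5) are precisely the mechanism that makes the exponent $\tfrac12$ provable; the reach is used in the paper \emph{only} through the Lipschitz bound \eqref{eq: u_NN Lipschitz}, not to truncate the constraint set. Without these ingredients, your argument stops at the same place you identify as ``the crux,'' so as written the proposal has a genuine gap rather than an alternative proof.
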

	
	It is worth mentioning that Theorem \ref{theo: main finite} is not a special case of Theorem \ref{theo: main infinite}. Indeed, in the finite case Theorem \ref{theo: main finite} provides stronger regularity on a larger domain than Theorem \ref{theo: main infinite}.
	
	\begin{remark}
		The proofs of Theorem \ref{theo: main finite} and Theorem \ref{theo: main infinite} are constructive, yielding algorithms that can be used to generate terminal embeddings in practical settings. Furthermore, we obtain explicit expressions of both the neighborhood $U$ and the constant $C_u>0$ in Theorem \ref{theo: main infinite}. If $U'$ and $C'_u$ are the neighborhood and the constant discussed in Remark \ref{rem: constants theo holder}, then one can take $U=U'$ and 
		\[ C_u = \sqrt{32+2C'^{2}_u + (6+2C'_u)(d(u,X) + 2)}.\]
	\end{remark}

    \begin{remark} \label{rem:UptoReachOK}
        One can show that the terminal embedding $f$ guaranteed by Theorem~\ref{theo: main infinite} is in fact 
        locally $\frac{1}{4}$-H\"{o}lder on $X+B(0,\tau_X/a)$ for any $a > 1$.  Herein we chose $a = 2$ for simplicity.
    \end{remark}

    As an example application of Theorem~\ref{theo: main infinite} we may now demonstrate the existence of terminal manifold embeddings which are continuous everywhere sufficiently close to the manifold one is embedding.  First, however, we state a bound on the Gaussian width of the unit secants of a smooth $d$-dimensional submanifold of $\mathbb{R}^N$ in terms of the manifold's dimension, reach, and volume.  The following is a restatement of Theorem 4.5 in \cite{iwen2024on}.

\begin{lemma}\label{GaussianWidthOfManifodWithBoundaryViaGunther}
	Let $\mathcal{M}$ be a compact $d$-dimensional submanifold of $\mathbb{R}^N$ 
	with boundary $\partial \mathcal{M}$, finite reach $\tau_{\mathcal{M}}$, and volume  $V_{\mathcal M}$.  Enumerate the connected components of $\partial \mathcal{M}$ and let $\tau_i$ be the reach of the $i^{\rm th}$ connected component of $\partial \mathcal M$ as a submanifold of $\mathbb{R}^N$. 
	Set $\tau := \min_{i} \{\tau_{\mathcal M}, \tau_i \}$, let $V_{\partial \mathcal{M}}$ be the volume of $\partial \mathcal{M}$, and denote the volume of the $d$-dimensional Euclidean ball of radius $1$ by $\omega_d$.  Next, 
	\begin{enumerate}
		\item if $d=1$, define $\alpha_{\mathcal M} :=  \frac{20 V_{\mathcal M}}{\tau}  + V_{\partial {\mathcal M}}$, else
		\item if $d \geq 2$, define $\alpha_{\mathcal M} :=
		\frac{V_\mathcal{M}}{ \omega_d} \left(\frac{41}{\tau} \right)^d 
		+ \frac{V_{\partial \mathcal{M}}}{ \omega_{d-1}} \left(\frac{81}{\tau} \right)^{d-1}$.  
	\end{enumerate}
	Finally, define 
	\begin{align}
		\beta_{\mathcal{M}} &:= \left(\alpha_{\mathcal M}^2 +3^{d} \alpha_{\mathcal M} \right). \label{equ:betadef}
	\end{align}
	Then, the Gaussian width of the unit secants of $\mathcal{M}$ satisfies $$w \left(  S_{\mathcal M} \right)  \leq 8\sqrt{2}\sqrt{\log \left(\beta_{\mathcal{M}} \right)+4d}.$$
\end{lemma}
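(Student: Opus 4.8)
The strategy is to pass from the Gaussian width to a metric entropy estimate and then apply Dudley's inequality. Write $\mathcal N(S,t)$ for the $t$-covering number of a set $S\subseteq\mathbb{R}^N$ in the Euclidean metric. Dudley's inequality bounds $w(S_{\mathcal M})$ by a universal constant times $\int_0^\infty\sqrt{\log\mathcal N(S_{\mathcal M},t)}\,dt$, and since $S_{\mathcal M}\subseteq S^{N-1}$ has diameter at most $2$ this integral runs effectively over $t\in(0,1]$. Everything therefore reduces to an explicit covering bound of the shape $\mathcal N(S_{\mathcal M},t)\le\beta_{\mathcal M}\,(c_1/t)^{c_2 d}$ for $t\in(0,1]$ with universal $c_1,c_2$: substituting this into the Dudley integral, using $\int_0^1\sqrt{\log(1/t)}\,dt<\infty$, and performing elementary manipulations to collect the constants produces a bound of exactly the announced form $w(S_{\mathcal M})\le 8\sqrt2\,\sqrt{\log\beta_{\mathcal M}+4d}$, with $\log\beta_{\mathcal M}$ absorbing the volume, reach, and constant factors from the covering bound and the $4d$ being what survives of $\int_0^1\sqrt{c_2 d\log(1/t)}\,dt$.

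To build the covering of $S_{\mathcal M}$ one separates the secant directions $(x-y)/\|x-y\|$, $x\ne y\in\mathcal M$, into a \emph{transversal} family, where $\|x-y\|$ exceeds a suitable multiple of $\tau t$, and a \emph{tangential} family, where $\|x-y\|$ is below that threshold. On the transversal family the secant map $(x,y)\mapsto(x-y)/\|x-y\|$ is Lipschitz with constant comparable to $1/(\tau t)$, so a net of $\mathcal M$ at a scale that is a fixed multiple of $\tau t^2$ induces a $t$-net of that family and contributes the factor $\mathcal N(\mathcal M,c\tau t^2)^2$; this is the source of the $\alpha_{\mathcal M}^2$ summand of $\beta_{\mathcal M}$. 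For the tangential family one uses that the reach controls the second fundamental form of $\mathcal M$ in norm by $1/\tau$; a first-order Taylor estimate then shows that when $\|x-y\|$ is a small enough multiple of $\tau t$ the secant direction lies within $t/2$ of the \emph{unit tangent bundle} $U\mathcal M=\{v\in T_x\mathcal M:\|v\|=1,\ x\in\mathcal M\}$, which, being a set of limits of secants, is itself contained in $S_{\mathcal M}$. Since $U\mathcal M$ fibers over $\mathcal M$ with unit-sphere fibers $S^{d-1}$, a net of $\mathcal M$ crossed with a net of $S^{d-1}$ on each fiber covers it, giving $\mathcal N(U\mathcal M,t/2)\le\mathcal N(\mathcal M,ct)\cdot\mathcal N(S^{d-1},ct)\le\mathcal N(\mathcal M,ct)\cdot(3/ct)^d$; the $3^d$ coming from the standard sphere covering bound $\mathcal N(S^{d-1},\epsilon)\le(1+2/\epsilon)^d$ is exactly the source of the $3^d\alpha_{\mathcal M}$ summand of $\beta_{\mathcal M}$.

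Finally one must bound $\mathcal N(\mathcal M,\delta)$ in terms of volume, dimension, and reach, and this is where G\"{u}nther's volume comparison inequality enters: the curvature bound coming from the reach forces every geodesic ball of $\mathcal M$ of radius $r$ (for $r$ up to a fixed multiple of $\tau$) centered away from $\partial\mathcal M$ to have $d$-dimensional volume at least $\omega_d(cr)^d$, so a maximal $\delta$-separated subset of the part of $\mathcal M$ at distance $\ge\delta$ from $\partial\mathcal M$ has cardinality at most $V_{\mathcal M}/(\omega_d(c\delta)^d)$. The points within the $\delta$-collar of $\partial\mathcal M$, where such balls may be truncated by the boundary and the volume lower bound fails, are covered separately: one pushes them onto $\partial\mathcal M$ and runs the same argument in dimension $d-1$, which produces the additive $V_{\partial\mathcal M}/(\omega_{d-1}(c\delta)^{d-1})$ term and explains the two-term form of $\alpha_{\mathcal M}$ (the $d=1$ case being degenerate, with the collar term replaced by the number of boundary points). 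Assembling the transversal, tangential, and boundary estimates and absorbing the various universal and reach-dependent constants into the numbers $41$, $81$, $3^d$ yields $\mathcal N(S_{\mathcal M},t)\le(\alpha_{\mathcal M}^2+3^d\alpha_{\mathcal M})(c_1/t)^{c_2 d}=\beta_{\mathcal M}(c_1/t)^{c_2 d}$, and Dudley's inequality completes the argument. I expect the boundary analysis to be the principal obstacle: honestly lower-bounding the volumes of geodesic balls that meet $\partial\mathcal M$, choosing the collar width so that its contribution merges cleanly with the interior count, and then tracking all the constants through G\"{u}nther's inequality and the Dudley integral so that the final coefficients emerge exactly as stated.
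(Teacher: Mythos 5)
The paper does not prove Lemma~\ref{GaussianWidthOfManifodWithBoundaryViaGunther}; it is explicitly a restatement of Theorem~4.5 of \cite{iwen2024on}, so there is no in-paper proof against which to check your argument. With that caveat, your outline is a credible reconstruction of what the cited result must do and it matches the structure of the statement: the transversal/tangential split of the secant set (large $\|x-y\|$ handled through Lipschitz continuity of $(x,y)\mapsto (x-y)/\|x-y\|$, small $\|x-y\|$ pushed onto the unit tangent bundle) naturally yields the two summands $\alpha_{\mathcal M}^2$ and $3^d\alpha_{\mathcal M}$ of $\beta_{\mathcal M}$, and the use of G\"{u}nther's volume comparison to control $\mathcal{N}(\mathcal M,\delta)$, with a separate count in the $\delta$-collar of $\partial\mathcal M$, is exactly what the lemma's name advertises and what the two-term form of $\alpha_{\mathcal M}$ and its degenerate $d=1$ variant reflect. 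Passing from that covering estimate to the Gaussian width bound via Dudley's integral, then using $\sqrt a + \sqrt b \le \sqrt{2}\sqrt{a+b}$ to fold the result into a single square root, is also consistent with the $8\sqrt{2}\sqrt{\log\beta_{\mathcal M}+4d}$ form.

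Where your sketch stops short of a proof is where you already say it does, but it is worth being concrete about what is actually missing: you do not establish the volume lower bound for geodesic balls that meet $\partial\mathcal M$ (the interior G\"{u}nther argument fails there and you only gesture at ``pushing onto $\partial\mathcal M$''); you do not verify that $\|x-y\|\lesssim\tau t$ forces the unit secant to lie within $t/2$ of the unit tangent bundle --- the bound of the second fundamental form by $1/\tau$ is itself a nontrivial theorem of Federer, and the first-order Taylor estimate needs the reach hypothesis to control the remainder uniformly; you treat the unit tangent bundle as a product with $S^{d-1}$ fibers, which requires a further argument (parallel transport or local trivialization plus the reach bound on how fast tangent planes rotate) to turn a net of $\mathcal M$ times a net of $S^{d-1}$ into a genuine net of $U\mathcal M$; and you do not carry the covering exponents through the Dudley integral to produce the specific constants $8\sqrt 2$, $4d$, $41$, $81$, and $3^d$. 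As written this is a roadmap with the right waypoints, not a self-contained argument, and since this paper itself only cites the result, a complete verification would require consulting \cite{iwen2024on} directly.
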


Given Lemma~\ref{GaussianWidthOfManifodWithBoundaryViaGunther} the following result is a direct consequence of Theorem \ref{theo: main infinite} and Remark~\ref{rem:UptoReachOK}.

\begin{corollary}\label{cor:main}
	Let $\mathcal{M}$ be a compact $d$-dimensional submanifold of $\mathbb{R}^N$ 
	with boundary $\partial \mathcal{M}$, finite reach $\tau_{\mathcal{M}}$, and volume  $V_{\mathcal M}$.  Enumerate the connected components of $\partial \mathcal{M}$ and let $\tau_i$ be the reach of the $i^{\rm th}$ connected component of $\partial \mathcal M$ as a submanifold of $\mathbb{R}^N$. 
	Set $\tau := \min_{i} \{\tau_{\mathcal M}, \tau_i \}$, let $V_{\partial \mathcal{M}}$ be the volume of $\partial \mathcal{M}$, and denote the volume of the $d$-dimensional Euclidean ball of radius $1$ by $\omega_d$. Set $\beta_{\mathcal{M}}$ as in (\ref{equ:betadef}). Then, for any $\varepsilon \in (0,1)$ there exists $f \colon \mathbb{R}^N \longrightarrow \mathbb{R}^m$ with $m=\mathcal{O}(\varepsilon^{-2}(\log (\beta_{\mathcal{M}}) + d))$ such that both
    \begin{align*}
	{\it (i)}&~~(1-\varepsilon)\|\x-\y\|_2 \leq \|f(\x)-f(\y)\|_2 \leq (1+\varepsilon)\|\x-\y\|_2 \quad \forall \, \x \in \mathcal{M} \quad \forall \, \y \in \mathbb{R}^N,~{\rm and}\\
    {\it (ii)}&~~f \textrm{~is~locally~} \tfrac{1}{4}\textrm{-H\"{o}lder~on~} {\mathcal M}+B(0,r) \quad \forall \, r \in (0,\tau_{\mathcal M}).
    \end{align*}
\end{corollary}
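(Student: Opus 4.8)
The plan is to obtain Corollary~\ref{cor:main} by specializing Theorem~\ref{theo: main infinite} to the set $X=\mathcal{M}$ and then feeding in the Gaussian width estimate of Lemma~\ref{GaussianWidthOfManifodWithBoundaryViaGunther}. First I would record that a compact $d$-dimensional submanifold $\mathcal{M}\subseteq\mathbb{R}^N$ (smooth enough that the notion of reach behaves well, as is implicit in the hypotheses) has \emph{positive} reach $\tau_{\mathcal{M}}>0$; this is a classical fact \cite{federer1959curvature}, compactness being what prevents $\tau_{\mathcal{M}}$ from degenerating to $0$, while finiteness of $\tau_{\mathcal{M}}$ is assumed outright. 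Hence Theorem~\ref{theo: main infinite} applies with $X=\mathcal{M}$ and produces an integer $m_0=\mathcal{O}(\varepsilon^{-2}\omega(S_{\mathcal{M}})^2)$ together with a terminal embedding $f\colon\mathbb{R}^N\to\mathbb{R}^{m_0+1}$ of $\mathcal{M}$ with distortion $\varepsilon$ that is locally $\tfrac14$-H\"older on $\mathcal{M}+B(0,\tau_{\mathcal{M}}/2)$. Setting $m:=m_0+1$, the terminal embedding inequality \eqref{eq: terminal condition} applied with $X=\mathcal{M}$ (and ambient dimension $N$) is verbatim claim~$(i)$.

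Next I would convert the width bound into the advertised dimension count. By Lemma~\ref{GaussianWidthOfManifodWithBoundaryViaGunther}, $\omega(S_{\mathcal{M}})\le 8\sqrt{2}\sqrt{\log(\beta_{\mathcal{M}})+4d}$, so $\omega(S_{\mathcal{M}})^2\le 128(\log(\beta_{\mathcal{M}})+4d)=\mathcal{O}(\log(\beta_{\mathcal{M}})+d)$. Substituting into $m_0=\mathcal{O}(\varepsilon^{-2}\omega(S_{\mathcal{M}})^2)$ and absorbing the additive $+1$ into the $\mathcal{O}(\cdot)$ (harmless, since $\varepsilon^{-2}\ge 1$) yields $m=\mathcal{O}(\varepsilon^{-2}(\log(\beta_{\mathcal{M}})+d))$, exactly as stated.

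Finally I would upgrade the H\"older domain from the single tube $\mathcal{M}+B(0,\tau_{\mathcal{M}}/2)$ to \emph{every} open tube $\mathcal{M}+B(0,r)$ with $r\in(0,\tau_{\mathcal{M}})$, which is claim~$(ii)$. For this I invoke Remark~\ref{rem:UptoReachOK}: the \emph{same} map $f$ is in fact locally $\tfrac14$-H\"older on $\mathcal{M}+B(0,\tau_{\mathcal{M}}/a)$ for every $a>1$. Given $r\in(0,\tau_{\mathcal{M}})$ we have $\tau_{\mathcal{M}}/r>1$, so picking any $a\in(1,\tau_{\mathcal{M}}/r)$ gives $r<\tau_{\mathcal{M}}/a$ and hence $\mathcal{M}+B(0,r)\subseteq\mathcal{M}+B(0,\tau_{\mathcal{M}}/a)$; since local $\tfrac14$-H\"older continuity on a set restricts to any subset, $f$ is locally $\tfrac14$-H\"older on $\mathcal{M}+B(0,r)$, establishing~$(ii)$. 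I expect no genuine obstacle here: the corollary is an assembly of Theorem~\ref{theo: main infinite}, Remark~\ref{rem:UptoReachOK}, and Lemma~\ref{GaussianWidthOfManifodWithBoundaryViaGunther}, and the only points needing care are the bookkeeping absorption of the $+1$ and the elementary observation that the radii $\tau_{\mathcal{M}}/a$, $a>1$, exhaust $(0,\tau_{\mathcal{M}})$.
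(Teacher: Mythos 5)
Your proof is correct and matches the paper's approach exactly: the paper states the corollary as a direct consequence of Theorem~\ref{theo: main infinite}, Remark~\ref{rem:UptoReachOK}, and Lemma~\ref{GaussianWidthOfManifodWithBoundaryViaGunther}, which is precisely the assembly you carry out. The only thing you add (helpfully, since the corollary's hypotheses only assert \emph{finite} reach) is the explicit remark that compactness guarantees $\tau_{\mathcal M}>0$, which is needed to invoke Theorem~\ref{theo: main infinite}.
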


We are now prepared to discuss preliminary results.


	\subsection{Previous work}\label{sec 2}
	
	In this subsection, we summarize step by step the process used in \cite{mahabadi2018nonlinear}, \cite{narayanan2019optimal}, and \cite{chiclana2024on} to construct terminal embeddings with arbitrarily small distortion. Then, we show how to exploit these constructions to generate terminal embeddings with  desired regularity properties.

	Let $X$ be a subset of $\mathbb{R}^d$. The first step in constructing a terminal embedding for $X$ is to find a linear Johnson-Lindenstrauss embedding $\Pi \colon \mathbb{R}^d \longrightarrow \mathbb{R}^m$ of $X$ with small distortion. The strategy is then to extend this mapping beyond $X$ in a strategically designed manner to satisfy the terminal condition (\ref{eq: terminal condition}). To achieve optimal embedding dimension, we will use a notion of embedding that is slightly stronger than the one provided by the Johnson-Lindenstrauss lemma. Let $T$ be a subset of the unit sphere of $\mathbb{R}^d$, and $\varepsilon \in (0,1)$. A linear mapping $\Pi \colon \mathbb{R}^d \longrightarrow \mathbb{R}^m$ is said to provide \textit{$\varepsilon$-convex hull distortion} for $T \subseteq \R^d$ if
	\[ \big{|}\|\Pi x\|-\|x\|\big{|}<\varepsilon \quad \forall \, x \in \operatorname{conv}(T),\]
	where $\operatorname{conv}(T)$ denotes the \textit{convex hull of $T$}. In this work, we typically use the set $T$ above to represent the unit secants of $X$, which is the set $S_X$ defined in (\ref{eq: S_X}).
	
	The existence of embeddings $\Pi \colon \mathbb{R}^d \longrightarrow \mathbb{R}^m$ providing $\varepsilon$-convex hull distortion for a set $X\subseteq \mathbb{R}^d$, for arbitrarily small $\varepsilon\in(0,1)$, has been previously studied. For finite sets $X$ with $n$ elements, \cite[Corollary 3.5]{narayanan2019optimal} shows that such mappings exist for $m=O(\varepsilon^{-2} \log(n))$. This follows as an application of powerful results from \cite{dirksen2016dimensionality}. For arbitrary sets $X$,  \cite[Corollary 3.2]{chiclana2024on} provides existence for $m=O(\varepsilon^{-2} \omega(S_X)^2)$, where $\omega(S_X)$ is the Gaussian width of $S_X$, see \eqref{eq:GWidth}. This was obtained as an application of the matrix deviation inequality (see \cite[Theorem 9.1.1]{vershynin2018high-dimensional}).
	
	Let $X$ be a subset of $\mathbb{R}^d$ and $\varepsilon \in (0,1)$. For any $u \in \mathbb{R}^d$, we select a point $u_{NN}$ from its closure $\overline{X}$  that minimizes the distance to $u$, i.e., 
	\begin{equation}\label{eq: u_NN} 
		u_{NN} \in \arg\min_{x \in \overline{X}} \; \| u-x\|.
	\end{equation}
	\begin{remark}\label{rem: u_NN unique}
		The subscript $NN$ alludes to  {\em nearest neighbor}, a terminology commonly used in computer science. Clearly, the mapping $u \mapsto u_{NN}$ is defined for any $u \in \mathbb{R}^d$, as it simply picks (according to some rule) an element from the (Euclidean) projection of $u$ onto $\overline{X}$, which becomes unique when $u$ is within reach of $X$.
	\end{remark}
	Consider an embedding $\Pi \colon \mathbb{R}^d \longrightarrow \mathbb{R}^m$ providing $\frac{\varepsilon}{6}$-convex hull distortion for $S_X$. The terminal embeddings presented in previous works (namely, \cite{mahabadi2018nonlinear}, \cite{narayanan2019optimal}, and \cite{chiclana2024on}) are constructed by extending $\Pi$ beyond $X$. More precisely, they define $f \colon \mathbb{R}^d \longrightarrow \mathbb{R}^{m+1}$ by
	\begin{equation}\label{eq: terminal embedding}
		f(u)=\left (\Pi u_{NN} +u',\sqrt{\|u-u_{NN}\|^2 - \|u'\|^2}\right ) \quad \forall \, u \in \mathbb{R}^d,
	\end{equation}
	where $u' \in \mathbb{R}^m$ is a point with $\|u'\|\leq \|u-u_{NN}\|$ that satisfies the constraints
	\[ \big{|}\langle u',\Pi(x-u_{NN})\rangle - \langle u-u_{NN}, x-u_{NN}\rangle \big{|} \leq \varepsilon \|u-u_{NN}\|\|x-u_{NN}\| \quad \forall \, x \in X.\]
	Later in the paper, the point $u'$ will be denoted by $\alpha(u)$ when working with finite sets, and by $\beta(u)$ when working with infinite sets. Observe that if $u \in X$, then $u_{NN}=u$. Thus, from $\|u'\|\leq \|u-u_{NN}\|$ we deduce $u'=0$. Therefore, $f$ can be seen as an extension of $\Pi$  in the following sense: we have $f(u)=(\Pi u,0)$ for all $u \in X$. Furthermore, the above constraints on $u'$ guarantee that such an extension satisfies the terminal condition (\ref{eq: terminal condition}). Intuitively, $u'$ is a point that approximately preserves all the angles formed when we consider vectors of the form $x-u_{NN}$ with $x \in X$. The preservation of these angles will then lead to preservation of the distances. It is important to note that the existence of such a point $u'$ is far from trivial, and it was first obtained for finite sets $X$ in \cite{mahabadi2018nonlinear} using the von Neumann's minimax theorem \cite{neumann1928zur}. Later on, it was optimized by \cite[Lemma 3.6]{narayanan2019optimal}, which states:
	
	\begin{lemma}[{\cite[Lemma 3.6]{narayanan2019optimal}}]\label{lem: narayan u'}
		Let $x_1,\ldots,x_n\in\mathbb{R}^d\setminus \{0\}$. Suppose that $\Pi \in \mathbb{R}^{m \times d}$ provides $\varepsilon$-convex hull distortion for $V=\left \{\pm \frac{x_i}{\|x_i\|}\colon i=1,\ldots,n\right \}$. Then, for any $u \in \mathbb{R}^d$, there is $u' \in \mathbb{R}^m$ such that $\|u'\|\leq \|u\|$ and $|\langle u',\Pi x_i\rangle - \langle u,x_i\rangle|\leq \varepsilon \|u\|\|x_i\|$ for every $x_i$.
	\end{lemma}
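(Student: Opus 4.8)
The plan is to recast the existence of $u'$ as a minimax statement and resolve it with von Neumann's minimax theorem, following the original argument of \cite{mahabadi2018nonlinear} in the form refined by \cite{narayanan2019optimal}. First I would normalize: set $w_i := x_i/\|x_i\|$ and $V = \{\pm w_i : i = 1,\dots,n\}$, which is exactly the set for which $\Pi$ provides $\varepsilon$-convex hull distortion. Dividing the desired inequality by $\|x_i\| > 0$ and using that $V$ is symmetric (so the two-sided bound $|\langle u',\Pi x_i\rangle - \langle u, x_i\rangle| \le \varepsilon\|u\|\|x_i\|$ is equivalent to the one-sided bounds at $\pm w_i$), it suffices to produce $u' \in \mathbb{R}^m$ with $\|u'\| \le \|u\|$ and $\langle u',\Pi w\rangle - \langle u, w\rangle \le \varepsilon\|u\|$ for every $w \in V$. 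Writing $B := \{u' \in \mathbb{R}^m : \|u'\| \le \|u\|\}$, the claim becomes
\[
\min_{u' \in B}\ \max_{w \in V}\ \bigl(\langle u',\Pi w\rangle - \langle u, w\rangle\bigr) \ \le\ \varepsilon\|u\|.
\]
If $u = 0$ then $u' = 0$ works, so I assume $u \ne 0$.

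Next I would pass to the convex hull. Since $w \mapsto \langle u',\Pi w\rangle - \langle u, w\rangle$ is affine in $w$, its maximum over $V$ equals its maximum over $\operatorname{conv}(V)$. The map $(u',w) \mapsto \langle u',\Pi w\rangle - \langle u, w\rangle$ is bilinear, $B$ is a compact convex subset of $\mathbb{R}^m$, and $\operatorname{conv}(V)$ is a compact convex subset of $\mathbb{R}^d$ (as $V$ is finite), so von Neumann's minimax theorem applies and
\[
\min_{u' \in B}\ \max_{w \in \operatorname{conv}(V)}\ \bigl(\langle u',\Pi w\rangle - \langle u, w\rangle\bigr)
\ =\ \max_{w \in \operatorname{conv}(V)}\ \min_{u' \in B}\ \bigl(\langle u',\Pi w\rangle - \langle u, w\rangle\bigr).
\]
For fixed $w$ the inner minimum is $-\|u\|\,\|\Pi w\| - \langle u, w\rangle$ (attained at $u' = -\|u\|\,\Pi w/\|\Pi w\|$, or at $0$ when $\Pi w = 0$).

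Finally I would estimate the right-hand side. For any $w \in \operatorname{conv}(V)$, Cauchy--Schwarz gives $-\langle u, w\rangle \le \|u\|\,\|w\|$, hence
\[
-\|u\|\,\|\Pi w\| - \langle u, w\rangle \ \le\ \|u\|\bigl(\|w\| - \|\Pi w\|\bigr) \ <\ \varepsilon\|u\|,
\]
the last step being exactly the $\varepsilon$-convex hull distortion hypothesis $\bigl|\|\Pi w\| - \|w\|\bigr| < \varepsilon$ on $\operatorname{conv}(V)$. Taking the maximum over the compact set $\operatorname{conv}(V)$ gives the bound displayed in the first paragraph, and any minimizer $u'$ of the left-hand minimax problem satisfies $\|u'\| \le \|u\|$ together with all the required inner-product inequalities; multiplying back by $\|x_i\|$ recovers the statement. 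The step that needs the most care is the invocation of the minimax theorem — verifying bilinearity and the compactness/convexity of both feasible sets, and that enlarging $V$ to $\operatorname{conv}(V)$ is harmless — while everything after the swap is a one-line computation; I would also keep an eye on strict versus non-strict inequalities, noting that the strict convex-hull-distortion hypothesis comfortably yields the non-strict conclusion.
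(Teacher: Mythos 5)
Your proof is correct and follows the same strategy that the paper points to: the paper cites this lemma from \cite{narayanan2019optimal} without reproving it, and explicitly notes that the original argument (from \cite{mahabadi2018nonlinear}) is a von Neumann minimax argument, which is exactly what you reproduce. The normalization to the symmetric set $V$, the pass from $V$ to $\operatorname{conv}(V)$ by affineness, the application of minimax on $B \times \operatorname{conv}(V)$, the inner-minimum computation $-\|u\|\,\|\Pi w\| - \langle u, w\rangle$, and the Cauchy--Schwarz plus convex-hull-distortion estimate all check out; the only cosmetic imprecision is calling the payoff ``bilinear'' when the $-\langle u,w\rangle$ term makes it affine rather than linear in $u'$, but this does not affect the validity of the minimax swap since the payoff is still convex--concave on compact convex sets.
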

	
	The previous result was generalized in \cite{chiclana2024on} for arbitrary subsets of $\mathbb{R}^d$.
	
	\begin{lemma}[{\cite[Lemma 3.4]{chiclana2024on}}]\label{lem:u'}
		Let $X\subseteq\mathbb{R}^d$. For $u \in \mathbb{R}^d$, let $u_{NN}= \text{argmin}_{x \in \overline{X}} \; \| u-x\|_2$. Suppose that $\Pi \in \mathbb{R}^{m\times d}$ provides $\frac{\varepsilon}{6}$-convex hull distortion for $S_X$.  Then, there is $u'\in \mathbb{R}^m$ such that
		\begin{align}
			& \|u'\|\leq \|u-u_{NN}\| \label{eq: constraint_norm},\\
			& \big{|}\langle u',\Pi(x-u_{NN})\rangle - \langle u-u_{NN}, x-u_{NN}\rangle \big{|} \leq \varepsilon \|u-u_{NN}\|\|x-u_{NN}\| \quad \forall \, x \in X.\label{eq: constraint_angles}
		\end{align}
	\end{lemma}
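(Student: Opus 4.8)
The plan is to deduce Lemma~\ref{lem:u'} from its finite-set counterpart, Lemma~\ref{lem: narayan u'}, via a translation to the origin followed by a compactness (finite intersection property) argument. First I would reduce to the case $u_{NN}=0$: set $v:=u-u_{NN}$ and $Y:=\{x-u_{NN} : x\in X\}$, so that $0\in\overline{Y}$ (since $u_{NN}\in\overline{X}$), $\|v\|=\|u-u_{NN}\|$, and the constraint \eqref{eq: constraint_angles} for $x\in X$ becomes $|\langle v',\Pi y\rangle-\langle v,y\rangle|\le\varepsilon\|v\|\|y\|$ with $y=x-u_{NN}\in Y$. The case $y=0$ is trivial, so it suffices to produce $v'\in\mathbb{R}^m$ with $\|v'\|\le\|v\|$ satisfying this bound for every $y\in Y\setminus\{0\}$; then $u':=v'$ is the desired point.

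The geometric input is that $\pm\frac{y}{\|y\|}\in S_X$ for every $y\in Y\setminus\{0\}$. Indeed, unit secants are translation invariant, so $S_Y=S_X$; and since $0\in\overline{Y}$ we may pick $y_k\in Y$ with $y_k\to 0$, whence $y\ne y_k$ eventually and $\frac{y-y_k}{\|y-y_k\|}\to\frac{y}{\|y\|}$, so $\frac{y}{\|y\|}\in\overline{\{\text{secants of }Y\}}=S_Y=S_X$; the vector $-\frac{y}{\|y\|}$ then also lies in $S_X$ because $S_X=-S_X$ (both $\frac{x-y}{\|x-y\|}$ and $\frac{y-x}{\|y-x\|}$ are secants). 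Consequently, for any finite $F=\{y_1,\dots,y_n\}\subseteq Y\setminus\{0\}$, the set $V=\{\pm y_i/\|y_i\|\}$ has $\operatorname{conv}(V)\subseteq\operatorname{conv}(S_X)$, so the hypothesis that $\Pi$ provides $\tfrac\varepsilon6$-convex hull distortion for $S_X$ implies $\Pi$ provides $\tfrac\varepsilon6$- (a fortiori $\varepsilon$-) convex hull distortion for $V$.

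Now I would apply Lemma~\ref{lem: narayan u'} with this $V$ and the given vector $v$: for each finite $F=\{y_1,\dots,y_n\}\subseteq Y\setminus\{0\}$ it yields some $v'_F\in\mathbb{R}^m$ with $\|v'_F\|\le\|v\|$ and $|\langle v'_F,\Pi y_i\rangle-\langle v,y_i\rangle|\le\varepsilon\|v\|\|y_i\|$ for all $i$ (in fact $\varepsilon$ could be replaced by $\varepsilon/6$ here). To pass from all finite $F$ to all of $Y$, let $K:=\{w\in\mathbb{R}^m : \|w\|\le\|v\|\}$, which is compact, and for each $y\in Y\setminus\{0\}$ let $A_y:=\{w\in K : |\langle w,\Pi y\rangle-\langle v,y\rangle|\le\varepsilon\|v\|\|y\|\}$, a closed subset of $K$. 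The previous step shows $v'_F\in\bigcap_{y\in F}A_y$, so the family $\{A_y\}_{y\in Y\setminus\{0\}}$ has the finite intersection property; by compactness of $K$, $\bigcap_{y\in Y\setminus\{0\}}A_y\ne\emptyset$, and any $v'$ in this intersection satisfies \eqref{eq: constraint_norm} and \eqref{eq: constraint_angles}. (Note that only $u_{NN}\in\overline{X}$, not its minimality, is used in this argument.)

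The main obstacle will be precisely the step from finitely many linear constraints to infinitely many: this works only because $v'$ is sought in the fixed compact ball of radius $\|u-u_{NN}\|$, which is what makes the finite intersection property argument close — an unbounded family of candidate $v'_F$ would not obviously accumulate. The only other point requiring care is verifying $\pm\frac{x-u_{NN}}{\|x-u_{NN}\|}\in S_X$, and this is exactly why $S_X$ must be defined as a closure and why one must allow $u_{NN}\in\overline{X}$ rather than demanding $u_{NN}\in X$.
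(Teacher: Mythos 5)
First, a note on what you are being compared against: the paper states Lemma~\ref{lem:u'} but does not prove it --- it is cited verbatim from \cite{chiclana2024on}, as the bracketed attribution shows. The paper's Lemma~\ref{lem: u' for cover} (described as ``deduced from the proof of \cite[Lemma 3.4]{chiclana2024on}'') reveals the route taken there: fix a finite $\delta$-cover $\mathcal{C}$ of the compact set $S_X\subseteq S^{d-1}$, apply the finite-set Lemma~\ref{lem: narayan u'} to $\mathcal{C}$, then upgrade the conclusion from $\mathcal{C}$ to all of $X$ by a triangle-inequality argument exploiting the convex-hull distortion.

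Your proof is correct but takes a genuinely different route, and its two key steps both check out. Step (a): $\pm\frac{x-u_{NN}}{\|x-u_{NN}\|}\in S_X$ for every $x\in X$ with $x\neq u_{NN}$, which you establish via translation invariance of unit secants, closedness of $S_X$, and approximation of $u_{NN}\in\overline{X}$ by points of $X$ --- this correctly handles $u_{NN}\notin X$, and the limiting argument degenerates harmlessly to the exact secant when $u_{NN}\in X$. Step (b): the finite intersection property of the closed constraint sets $A_y$ inside the compact ball $K=\{w:\|w\|\le\|u-u_{NN}\|\}$, with Lemma~\ref{lem: narayan u'} supplying a point in every finite subfamily. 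Your own diagnosis of where the argument could fail is accurate: boundedness of $K$ is precisely what makes the FIP argument close. Comparing the two: the cover approach extracts compactness from $S_X$ (total boundedness in $S^{d-1}$) and reduces the infinite constraint family to a \emph{fixed} finite one up front; this is structurally important for the paper, whose Section~\ref{sec 3} defines $\beta(u)$ by optimizing over exactly those finitely many cover constraints --- the cover is thus a construction device, not merely a proof device. Your FIP approach extracts compactness from the Euclidean ball instead, uses only closedness (not compactness) of $S_X$, and --- as you observe --- needs only $\varepsilon$- rather than $\varepsilon/6$-convex-hull distortion; it is a cleaner pure existence proof and slightly more general, but it produces no canonical, finitely-specified map $u\mapsto u'$, which the paper's subsequent regularity analysis requires. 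As a proof of the stated existence claim, yours is valid.
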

	
	As we noted above, equations (\ref{eq: constraint_norm}) and (\ref{eq: constraint_angles}) are enough to guarantee that a function $f\colon \mathbb{R}^d \longrightarrow \mathbb{R}^{m+1}$ defined as in (\ref{eq: terminal embedding}) satisfies the terminal condition (\ref{eq: terminal condition}). This was proved for finite sets in \cite[Lemma 3.7]{narayanan2019optimal}, and later extended for infinite sets in \cite[Theorem 1.1]{chiclana2024on}.
	
	\begin{lemma}[{\cite[Remark 1.2]{chiclana2024on}}]\label{lem: is terminal}
		Let $X\subseteq\mathbb{R}^d$ and $\varepsilon\in(0,1)$. Let $\Pi\colon \mathbb{R}^d \longrightarrow \mathbb{R}^m$ provide $\frac{\varepsilon}{60}$-convex hull distortion for the units secants $S_X$. For any $u \in \mathbb{R}^d$, let $u_{NN}= \text{argmin}_{x \in \overline{X}} \; \| u-x\|$ and let $u'\in \mathbb{R}^m$ be a point satisfying (\ref{eq: constraint_norm}) and (\ref{eq: constraint_angles}) for $\frac{\varepsilon}{10}$. Define $f\colon \mathbb{R}^d \longrightarrow \mathbb{R}^{m+1}$ by 
		\[f(u)=\begin{cases}
			\big{(}\Pi u_{NN} +u',\sqrt{\|u-u_{NN}\|^2 - \|u'\|^2}\big{)} & \mbox{if }  u \in \mathbb{R}^d \setminus\overline{X};\\
			\big{(}\Pi u,0\big{)} & \mbox{if } u \in \overline{X}.
		\end{cases}\]
		Then, $f$ provides a terminal embedding with distortion $\varepsilon$.
	\end{lemma}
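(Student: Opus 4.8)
The plan is to verify the two inequalities in the terminal condition \eqref{eq: terminal condition} directly from the defining formula for $f$, fixing $x \in X$ and $y \in \mathbb{R}^d$ with $x \neq y$ (the case $x = y$ being trivial) and splitting into two cases according to whether $y \in \overline{X}$ or not. I would write $p := y_{NN}$, $v := x - p$, $w := y - p$, so that $x - y = v - w$, and recall $f(x) = (\Pi x, 0)$ by definition.

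In the easy case $y \in \overline{X}$ we have $p = y$, $w = 0$, $f(y) = (\Pi y, 0)$, and $\|f(x) - f(y)\| = \|\Pi(x - y)\|$. Since $x \in X$ and $y \in \overline{X}$, the unit vector $\tfrac{x-y}{\|x-y\|}$ is a limit of unit secants of $X$, hence lies in $S_X \subseteq \operatorname{conv}(S_X)$; I would apply the $\tfrac{\varepsilon}{60}$-convex hull distortion of $\Pi$ to it and rescale by $\|x-y\|$ to get $\big|\|\Pi(x-y)\| - \|x-y\|\big| \leq \tfrac{\varepsilon}{60}\|x-y\|$, which is far stronger than needed. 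The substantive case is $y \notin \overline{X}$, where $f(y) = \big(\Pi p + y', \sqrt{\|w\|^2 - \|y'\|^2}\big)$ (well defined by \eqref{eq: constraint_norm}). Here the key step is to square and simplify:
\[ \|f(x)-f(y)\|^2 = \|\Pi v - y'\|^2 + \big(\|w\|^2 - \|y'\|^2\big) = \|\Pi v\|^2 - 2\langle \Pi v, y'\rangle + \|w\|^2, \]
so that after subtracting $\|x - y\|^2 = \|v\|^2 - 2\langle v,w\rangle + \|w\|^2$ I obtain
\[ \|f(x)-f(y)\|^2 - \|x-y\|^2 = \big(\|\Pi v\|^2 - \|v\|^2\big) - 2\big(\langle \Pi v, y'\rangle - \langle v, w\rangle\big). \]
The first term is bounded using convex hull distortion exactly as before — the direction $\tfrac{v}{\|v\|}$ is again a limit of unit secants since $x \in X$, $p \in \overline{X}$ — giving $\big|\|\Pi v\|^2 - \|v\|^2\big| \leq \tfrac{\varepsilon}{20}\|v\|^2$; the second is bounded by \eqref{eq: constraint_angles} (with $\tfrac{\varepsilon}{10}$ in place of $\varepsilon$) as $\big|\langle \Pi v, y'\rangle - \langle v, w\rangle\big| \leq \tfrac{\varepsilon}{10}\|v\|\|w\|$. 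Then, crucially, since $p$ is a closest point of $\overline{X}$ to $y$ and $x \in \overline{X}$, I have $\|w\| = d(y,X) \leq \|x-y\|$ and hence $\|v\| \leq \|x-y\| + \|w\| \leq 2\|x-y\|$. Feeding these in yields $\big|\|f(x)-f(y)\|^2 - \|x-y\|^2\big| \leq \tfrac{3\varepsilon}{5}\|x-y\|^2$, and since $(1-\varepsilon)^2 \leq 1 - \tfrac{3\varepsilon}{5}$ and $1 + \tfrac{3\varepsilon}{5} \leq (1+\varepsilon)^2$ for $\varepsilon \in (0,1)$, the terminal inequalities follow.

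The hard part is really just the bookkeeping in the second case: recognizing that the last coordinate of $f(y)$ was engineered precisely so that the $\|y'\|^2$ contributions cancel after squaring, that the two residual terms are exactly the ones the hypotheses on $\Pi$ and on $y'$ are designed to control, and that the nearest-neighbor property of $p$ is what converts estimates in $\|v\|,\|w\|$ into estimates in $\|x-y\|$. I would also be careful to invoke the closure in the definition \eqref{eq: S_X} of $S_X$, so that secant directions $x - y_{NN}$ with $y_{NN}$ possibly only in $\overline{X} \setminus X$ remain covered by the convex hull distortion hypothesis. The numerical constants $60$ and $10$ in the hypotheses are comfortably generous, so no delicate tuning is required.
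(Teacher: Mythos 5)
Your proof is correct, and it follows the same standard strategy used in \cite{narayanan2019optimal,chiclana2024on} (this lemma is cited, not reproved, in the present paper): square $\|f(x)-f(y)\|$ so that the $\|y'\|^2$ terms cancel against the engineered last coordinate, isolate the two residual terms $\|\Pi v\|^2-\|v\|^2$ and $\langle \Pi v,y'\rangle-\langle v,w\rangle$, control them via the convex-hull-distortion hypothesis and the angle constraint \eqref{eq: constraint_angles} respectively, and then convert $\|v\|,\|w\|$ into multiples of $\|x-y\|$ using the nearest-neighbor property of $y_{NN}$. Your numerics (a $\tfrac{3\varepsilon}{5}$ multiplicative perturbation of the squared distance, absorbed into $(1\pm\varepsilon)^2$) check out, and your caveat about needing the closure in the definition of $S_X$ to handle $y_{NN}\in\overline{X}\setminus X$ is exactly the right point to flag.
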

	
	In parallel, Cherapanamjeri and Nelson \cite{Cherapanamjeri2022terminal} proposed a construction of terminal embeddings that can be evaluated in sublinear time. Their approach replaces exact nearest neighbor queries with approximate nearest neighbor structures, leading to a significant improvement in computational efficiency.
	
	While this development is very appealing from an algorithmic perspective, it does not directly address questions of regularity. In particular, their construction still relies on associating to each query point a representative point in $X$, albeit in an implicit and approximate manner. As a result, the embedding may still exhibit discontinuities when this representative point changes. In this work we focus on the nearest neighbor–based construction, as the framework of \cite{Cherapanamjeri2022terminal}, while computationally efficient, does not appear to provide additional regularity properties within the scope of our approach.

	\subsection{Roadmap}
	
	The rest of the paper is structured as follows. 
		In Section \ref{sec 3}, we present the optimization problems that will be used to construct regular terminal embeddings.  Section \ref{sec 4} and Section \ref{sec 5} are then dedicated to proving Theorem \ref{theo: main finite} and Theorem \ref{theo: main infinite}, respectively.

	\section{Constructing regular terminal embeddings via optimization}\label{sec 3}
	
	Since the terminal embeddings presented in Section \ref{sec 2} are essentially constructed point by point, with the selection of a valid $u' \in \mathbb{R}^m$ for each $u \in \mathbb{R}^d$, we potentially compromise regularity (smoothness) properties of the resulting map in the process. However, Lemma \ref{lem: is terminal} shows that any map $u\mapsto u'$ for which $u'$ satisfies equations (\ref{eq: constraint_norm}) and (\ref{eq: constraint_angles}) can be utilized to generate a terminal embedding. In order to construct regular terminal embeddings, we will exploit this freedom and select $u'$ so that the mapping $u\mapsto u'$ is smooth. In this work, we achieve this by selecting $u'$ to be the orthogonal projection of $0$ onto the set of points satisfying (\ref{eq: constraint_norm}) and (\ref{eq: constraint_angles}). Indeed, this allows us to see $u'$ as the solution of a specific optimization problem. Then, we can apply results from optimization theory to analyze the regularity of such a solution. For convenience, we will denote this crucial point $u'$ by $\alpha(u)$ in the finite case, and by $\beta(u)$ in the infinite case. Let us start by introducing the optimization problem in the finite case.
	
	\subsection{The finite case}
	
	Let $X=\{x_1,\ldots,x_n\} \subseteq \mathbb{R}^d$, $\varepsilon \in (0,1)$, and $\Pi \colon \mathbb{R}^d \longrightarrow \mathbb{R}^m$ providing $\frac{\varepsilon}{60}$-convex hull distortion for $S_X$.
	
	We define functions $g_i \colon \mathbb{R}^m \times \mathbb{R}^d \longrightarrow \mathbb{R}$ by
	\begin{align}\label{eq: g_i 1}
		g_i(z,u)&=\langle z,\Pi (x_i-u_{NN})\rangle - \langle u-u_{NN}, x_i-u_{NN}\rangle - \frac{\varepsilon}{10}\|u-u_{NN}\|\|x_i-u_{NN}\|, \, i=1,\ldots,n.\\
		\label{eq: g_i 2}
		g_{n+i}(z,u)&= \langle u-u_{NN},x_i-u_{NN} \rangle - \langle z,\Pi (x_i-u_{NN})\rangle- \frac{\varepsilon}{10}\|u-u_{NN}\|\|x_i-u_{NN}\|, \, i=1,\ldots,n.
	\end{align}

	Observe that functions $g_i$ are well-defined on $\mathbb{R}^m \times \mathbb{R}^d$ since the mapping $u \mapsto u_{NN}$ is defined even for $u \in \mathbb{R}^d$ such that $\arg\min_{x \in \overline{X}} \; \| u-x\|$ contains more than one point (see Remark \ref{rem: u_NN unique}). Fixing $u\in \R^d$,  we call the set of all $z\in R^m$ that satisfy the constraints $g_i(z,u)\leq 0$ for $i=1,\ldots,2n$, the {\em feasible set (for $u$)}, and denote it by $F_u$, i.e.,
		\[
		F_u=\{z\in \R^m \colon g_i(z,u)\leq 0 \quad \forall \, i=1,\ldots,2 n\}.
		\]
	The functions $g_i(\cdot,u)$ are affine for $i=1,\ldots,2n$. Therefore, $F_u$ is a closed and convex (polyhedral) set, which is not empty as Lemma \ref{lem:u'} shows. In particular, the origin has a unique orthogonal projection onto $F_u$, which we denote  $\alpha(u)$. Equivalently, $\alpha(u)$ is the point in $F_u$ with minimal norm, that is, the solution of the following optimization problem:
	
	\begin{equation}\label{optimization problem finite}
		\begin{aligned}
			&\underset{z \in \mathbb{R}^m}{\operatorname{min}} \quad \quad  \|z\| \\
			&\text{s.t.} \quad g_i(z,u)\leq 0,\ i=1,\dots,2n.
		\end{aligned}
		\tag{P$_u$}
	\end{equation}
	Thus, $\alpha \colon \mathbb{R}^d \longrightarrow \mathbb{R}^m$ maps a point $u \in \mathbb{R}^d$ to the solution of the optimization problem (\ref{optimization problem finite}), or, more concretely, 
		\[
		\alpha(u)=\arg\min_{z\in F_u} \|z\|.
		\]
	
	\begin{remark}
		Let $f\colon \mathbb{R}^d \longrightarrow \mathbb{R}^{m+1}$ defined as in (\ref{eq: terminal embedding}), where $u'=\alpha(u)$. Recall that if $\alpha(u)$ satisfies conditions (\ref{eq: constraint_norm}) and (\ref{eq: constraint_angles}), then Lemma \ref{lem: is terminal} guarantees that $f$ is a terminal embedding.
		Clearly, (\ref{eq: constraint_angles}) holds for any point in the feasible set $F_u$ and, in particular, for $\alpha(u)$. Additionally, Lemma~\ref{lem:u'} shows that there is a point in the feasible set $F_u$ satisfying (\ref{eq: constraint_norm}). Since $\alpha(u)$ is the point in $F_u$ with minimal norm, then it also satisfies (\ref{eq: constraint_norm}).
	\end{remark} 
	In order to prove Theorem \ref{theo: main finite}, we will show in Section \ref{sec 4} that $\alpha\colon \mathbb{R}^d \longrightarrow \mathbb{R}^m$ is locally Lipschitz almost everywhere. Then, we will see that this induces H\"{o}lder continuity in the terminal embedding $f$.
	
	\subsection{ The general case}
	
	In this subsection, we extend the previous analysis for arbitrary subsets of $\mathbb{R}^d$. Let $X$ be a subset of $\mathbb{R}^d$, let $S_X$ be its unit secants, defined as in (\ref{eq: S_X}), and $\varepsilon \in (0,1)$. Fix $u \in \mathbb{R}^d$ and recall that $u_{NN}$ (defined in \eqref{eq: u_NN}) is a point from $\overline{X}$ at minimal distance from $u$.
	
	Recall that, by Lemma \ref{lem: is terminal}, for a map $u \mapsto u'$ to generate a terminal embedding $f \colon \mathbb{R}^d \longrightarrow \mathbb{R}^{m+1}$ of the form (\ref{eq: terminal embedding}), it is enough to satisfy (\ref{eq: constraint_norm}) and (\ref{eq: constraint_angles}). When $X$ is an infinite set, these are infinitely many constraints. We can reduce it to a finite number of constraints with the following approach:
	Consider a finite \textit{$\frac{\varepsilon}{4}$-cover} $\mathcal{C}$ of $S_X$, i.e., for any $v \in S_X$ there is $w \in \mathcal{C}$ with $\|v-w\|<\frac{\varepsilon}{4}$.   We claim that it is enough to show that (\ref{eq: constraint_angles}) holds for all $w$ in $\mathcal{C}$.
	
	This is deduced from the proof of \cite[Lemma 3.4]{chiclana2024on}, but we include it here for completeness.
	
	\begin{lemma}\label{lem: u' for cover}
		Let $X\subseteq\mathbb{R}^d$, $\varepsilon \in (0,1)$, and $\mathcal{C}$ an $\frac{\varepsilon}{40}$-cover of $S_X$. Let $\Pi \colon \mathbb{R}^d \longrightarrow \mathbb{R}^m$ provide $\frac{\varepsilon}{240}$-convex hull distortion for $S_X$. Given $u \in \mathbb{R}^d$, assume $u'\in \mathbb{R}^m$ satisfies $\|u'\|\leq \|u-u_{NN}\|$ and
		\begin{equation}\label{eq: cover condition}
			\big{|}  \langle u',\Pi w  \rangle - \langle u-u_{NN}, w \rangle \big{|} \leq \frac{\varepsilon}{30} \|u-u_{NN}\| \quad \forall \, w \in \mathcal{C}.
		\end{equation}
		Then, $\big{|}  \langle u',\Pi(x-u_{NN})  \rangle - \langle u-u_{NN}, x-u_{NN} \rangle \big{|} \leq \frac{\varepsilon}{10} \|u-u_{NN}\|\|x-u_{NN}\|$ for all $x \in X$. 
	\end{lemma}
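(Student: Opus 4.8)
The plan is to reduce the infinitely many angle constraints indexed by $x \in X$ to the finitely many constraints over the cover $\mathcal{C}$, by approximating an arbitrary secant direction $(x - u_{NN})/\|x-u_{NN}\|$ by a nearby cover point and controlling all the error terms that arise. First I would normalize: fix $x \in X$, set $t := \|x - u_{NN}\|$ and $v := (x - u_{NN})/t \in S_X$ (the case $x = u_{NN}$ is trivial since both sides vanish; also if $u = u_{NN}$ both sides vanish, so we may assume $u \neq u_{NN}$). By the cover property there is $w \in \mathcal{C}$ with $\|v - w\| < \varepsilon/40$. The quantity to bound, $\big|\langle u', \Pi(x - u_{NN})\rangle - \langle u - u_{NN}, x - u_{NN}\rangle\big| = t\,\big|\langle u', \Pi v\rangle - \langle u - u_{NN}, v\rangle\big|$, I would split via the triangle inequality into three pieces: (i) $t\,\big|\langle u', \Pi v\rangle - \langle u', \Pi w\rangle\big| = t\,|\langle u', \Pi(v-w)\rangle|$; (ii) $t\,\big|\langle u', \Pi w\rangle - \langle u - u_{NN}, w\rangle\big|$; and (iii) $t\,\big|\langle u - u_{NN}, w\rangle - \langle u - u_{NN}, v\rangle\big| = t\,|\langle u - u_{NN}, w - v\rangle|$.

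Term (iii) is immediate from Cauchy--Schwarz: it is at most $t\,\|u - u_{NN}\|\,\|w - v\| \le t\,\|u-u_{NN}\|\,\tfrac{\varepsilon}{40}$. Term (ii) is exactly the hypothesis \eqref{eq: cover condition} applied to $w \in \mathcal{C}$, giving at most $t \cdot \tfrac{\varepsilon}{30}\|u - u_{NN}\|$. Term (i) is the one that needs the convex-hull-distortion property of $\Pi$, and it is the main obstacle: we must bound $\|\Pi(v - w)\|$, but $v - w$ is a difference of unit vectors, not itself a unit vector or a convex combination, so I cannot directly apply the defining inequality $\big|\|\Pi x\| - \|x\|\big| < \tfrac{\varepsilon}{240}$ for $x \in \operatorname{conv}(S_X)$. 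The standard trick (this is precisely what is used in the proof of \cite[Lemma 3.4]{chiclana2024on}) is to write $v - w = \|v - w\|\cdot \tfrac{v - w}{\|v-w\|}$ and observe that if $v, w \in S_X \cup(-S_X)$ then $\tfrac{v-w}{\|v-w\|}$ is a \emph{normalized secant-type vector} lying in $\operatorname{conv}(S_X \cup (-S_X))$, or more carefully: $\tfrac12(v + (-w))$ lies in $\operatorname{conv}(S_X \cup(-S_X))$ so $\|\Pi(v - w)\| = 2\|\Pi(\tfrac12 v - \tfrac12 w)\| \le 2(\|\tfrac12 v - \tfrac12 w\| + \tfrac{\varepsilon}{240}) = \|v - w\| + \tfrac{\varepsilon}{120}$. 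Here one needs that $-w$ and $v$ are both in a set whose convex hull $\Pi$ distorts by $\tfrac{\varepsilon}{240}$; since $\mathcal{C} \subseteq S_X$ is typically taken and $S_X$ is already symmetric-ish, one should verify that $\operatorname{conv}(S_X)$-distortion suffices, perhaps by enlarging to $\operatorname{conv}(S_X \cup -S_X)$ at the cost of a factor $2$, which the constants $240$ vs.\ $10$ comfortably absorb. Thus term (i) is at most $t\,\|u'\|\,\big(\|v - w\| + \tfrac{\varepsilon}{120}\big) \le t\,\|u - u_{NN}\|\big(\tfrac{\varepsilon}{40} + \tfrac{\varepsilon}{120}\big)$, using $\|u'\| \le \|u - u_{NN}\|$.

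Summing the three bounds, term (i) contributes $\le t\|u-u_{NN}\|(\tfrac{\varepsilon}{40}+\tfrac{\varepsilon}{120})$, term (ii) contributes $\le t\|u-u_{NN}\|\tfrac{\varepsilon}{30}$, and term (iii) contributes $\le t\|u-u_{NN}\|\tfrac{\varepsilon}{40}$; adding these gives a total of at most $t\|u - u_{NN}\|\big(\tfrac{1}{40} + \tfrac{1}{120} + \tfrac{1}{30} + \tfrac{1}{40}\big)\varepsilon$. One checks $\tfrac{1}{40} + \tfrac{1}{120} + \tfrac{1}{30} + \tfrac{1}{40} = \tfrac{3}{120} + \tfrac{1}{120} + \tfrac{4}{120} + \tfrac{3}{120} = \tfrac{11}{120} \le \tfrac{1}{10}$, so the total is at most $\tfrac{\varepsilon}{10}\,t\,\|u - u_{NN}\| = \tfrac{\varepsilon}{10}\|u - u_{NN}\|\,\|x - u_{NN}\|$, which is exactly \eqref{eq: constraint_angles} with constant $\tfrac{\varepsilon}{10}$, as claimed. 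Since $x \in X$ was arbitrary, the proof is complete. The only genuinely delicate point is the convex-hull-distortion estimate for the non-unit vector $v - w$ in term (i), where one must be careful about whether to work with $S_X$ or its symmetrization and to track the factor of $2$; everything else is Cauchy--Schwarz and bookkeeping of the numerical constants.
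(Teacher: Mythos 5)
Your proof is correct and follows essentially the same route as the paper: normalize to the unit secant direction, pick a nearby cover point $w$, split by the triangle inequality into the same three terms, bound the middle one by the hypothesis, the last by Cauchy--Schwarz, and the first by writing $\Pi(v-w) = 2\,\Pi\bigl(\tfrac{v-w}{2}\bigr)$ and invoking the $\tfrac{\varepsilon}{240}$-convex-hull distortion; the arithmetic $\tfrac{1}{40}+\tfrac{1}{120}+\tfrac{1}{30}+\tfrac{1}{40}=\tfrac{11}{120}\le\tfrac{1}{10}$ matches the paper's. The one place you hedge --- whether $\operatorname{conv}(S_X)$-distortion suffices for $\tfrac{v-w}{2}$ or whether one must symmetrize --- is unnecessary: $S_X$ is already symmetric by its definition in \eqref{eq: S_X}, since $\frac{x-y}{\|x-y\|}\in S_X$ iff $\frac{y-x}{\|y-x\|}=-\frac{x-y}{\|x-y\|}\in S_X$, so $-v\in S_X$ and (with $w\in\mathcal{C}\subseteq S_X$) $\tfrac{w-v}{2}\in\operatorname{conv}(S_X)$ directly; this is exactly the observation the paper makes (``the latter being true as $-w_x\in S_X$'').
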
 
	
	\begin{proof}
		Fix $x \in X$ with $x\neq u_{NN}$ and write $w_x= \frac{x-u_{NN}}{\|x-u_{NN}\|} \in S_X$. It now suffices to show that
		\[ 
		\big{|}  \langle u',\Pi w_x  \rangle - \langle u-u_{NN}, w_x \rangle \big{|} \leq \frac{\varepsilon}{10} \|u-u_{NN}\|.
		\]
		Take $w \in \mathcal{C}$ with $\|w_x - w\|\leq \frac{\varepsilon}{40}$. A straightforward application of the triangle inequality shows that $\left |  \langle u',\Pi w_x  \rangle - \langle u-u_{NN}, w_x \rangle \right |$ is less than or equal to
		\[ \big{|}  \langle u',\Pi w_x  \rangle - \langle u', \Pi w \rangle \big{|} + \big{|}  \langle u',\Pi w  \rangle - \langle u-u_{NN}, w \rangle \big{|} + \big{|}  \langle \langle u-u_{NN}, w  \rangle - \langle u-u_{NN}, w_x \rangle \big{|}.\]
		By assumption \eqref{eq: cover condition}, the middle term is bounded above by $\frac{\varepsilon}{30} \|u-u_{NN}\|$. Similarly, the Cauchy-Schwarz inequality (and our choice of $w_x$) shows that the last term is  bounded above  by $\frac{\varepsilon}{40}\|u-u_{NN}\|$. Thus, it remains to bound the first term. Observe that
		\begin{align*} \big{|}  \langle u',\Pi w_x  \rangle - \langle u', \Pi w \rangle \big{|} &\leq \|\Pi(w_x-w)\| \|u'\| \leq 2\left \|\frac{\Pi(w_x-w)}{2}\right \| \|u-u_{NN}\| \\
			& \leq 2\left (\left \|\frac{w_x-w}{2}\right \| + \frac{\varepsilon}{240}\right ) \|u-u_{NN}\| \leq \frac{\varepsilon}{30}\|u-u_{NN}\|,
		\end{align*}
		where the third inequality follows from the fact that $\Pi$ provides an $\frac{\varepsilon}{240}$-convex-hull distortion of $S_X$ and that $\frac{w_x-w}{2}\in {\rm conv}(S_X)$; the latter being true as $-w_x\in S_X$.
	\end{proof}
	
	Let $X$ be a subset of $\mathbb{R}^d$, $\varepsilon \in (0,1)$, and $\mathcal{C}=\{w_1,\ldots,w_\ell\}$ be an $\frac{\varepsilon}{40}$-cover of $S_X$. Let $\Pi \colon \mathbb{R}^d \longrightarrow \mathbb{R}^m$ provide $\frac{\varepsilon}{240}$-convex hull distortion for $S_X$. Define $\tilde{g}_i \colon \R^m \times \mathbb{R}^d \longrightarrow \R$ by
	\begin{align}\label{eq: g_i tilde 1}
		\tilde{g}_i(z,u)&=\langle z,\Pi w_i \rangle - \langle u-u_{NN}, w_i\rangle - \frac{\varepsilon}{30}\|u-u_{NN}\|, i=1,\ldots,\ell.\\
		\label{eq: g_i tilde 2}
		\tilde{g}_{\ell+i}(z,u)&= \langle u-u_{NN}, w_i\rangle - \langle z,\Pi w_i \rangle- \frac{\varepsilon}{30}\|u-u_{NN}\|, i=1,\ldots,\ell.
	\end{align}
	We write $\widetilde{F}_u$ for the feasible set of a point $u \in \mathbb{R}^d$ for the constraints $\tilde{g}_i(z,u)\leq 0$, that is,
	\[\widetilde{F}_u=\{z\in \R^m \colon \tilde{g}_i(z,u)\leq 0 \quad \forall \, i=1,\ldots,2\ell\}.\]
	As in the previous subsection, observe that $\widetilde{F}_u$ is a closed and convex set, which is not empty as Lemma \ref{lem:u'} shows. Hence, the origin has a unique orthogonal projection onto $\widetilde{F}_u$, which we denote $\beta(u)$. Equivalently, $\beta(u)$ is the point in $\widetilde{F}_u$ with minimal norm, that is, the solution of the following optimization problem:
	
	\begin{equation}\label{optimization problem infinite}
		\begin{aligned}
			&\underset{z \in \mathbb{R}^m}{\operatorname{minimize}} \quad \quad  \|z\| \\
			&\text{subject to } \quad \tilde{g}_i(z,u)\leq 0,\ i=1,\dots,2 \ell.
		\end{aligned}
		\tag{$\widetilde{\operatorname{P}}_u$}
	\end{equation}
	Thus, $\beta \colon \mathbb{R}^d \longrightarrow \mathbb{R}^m$ maps a point $u \in \mathbb{R}^d$ to the solution of the optimization problem (\ref{optimization problem infinite}). 
	
	\begin{remark}
		Let $f\colon \mathbb{R}^d \longrightarrow \mathbb{R}^{m+1}$ defined as in (\ref{eq: terminal embedding}), where $u'=\beta(u)$. Recall that if $\beta(u)$ satisfies conditions (\ref{eq: constraint_norm}) and (\ref{eq: cover condition}), then Lemma \ref{lem: u' for cover} and Lemma \ref{lem: is terminal} guarantee that $f$ is a terminal embedding.
		Clearly, (\ref{eq: cover condition}) holds for any point in the feasible set $\widetilde{F}_u$ and, in particular, for $\beta(u)$. Additionally, Lemma~\ref{lem:u'} shows that there is a point in the feasible set $\widetilde{F}_u$ satisfying (\ref{eq: constraint_norm}). Since $\beta(u)$ is the point in $\widetilde{F}_u$ with minimal norm, then it also satisfies (\ref{eq: constraint_norm}).
	\end{remark}
	When $X$ has positive reach $\tau_X>0$, and we restrict to $\widetilde{X}=X+B(0,\tau_X/2)$, we will show in Section~\ref{sec 5} that $\beta \colon \widetilde{X} \longrightarrow \mathbb{R}^m$ is locally $\frac{1}{2}$-H\"{o}lder, which induces $\frac{1}{4}$-H\"{o}lder continuity in the terminal embedding defined as in (\ref{eq: terminal embedding}), where $u'=\beta(u)$.

	\section{Finite case (proof of Theorem \ref{theo: main finite})}\label{sec 4}
	
	This section is dedicated to proving Theorem \ref{theo: main finite}. Recall that $\alpha\colon \mathbb{R}^d \longrightarrow \mathbb{R}^m$ maps a point $u \in \mathbb{R}^d$ to the solution of the optimization problem (\ref{optimization problem finite}). Our approach utilizes results from optimization theory to analyze the regularity of the map $\alpha$. Then, we will infer regularity for a terminal embedding of the form (\ref{eq: terminal embedding}), where $u'=\alpha(u)$. We start by introducing some notation and preliminary results.
	
	\subsection{Constraint qualifications}
	
	Let $U \subseteq \mathbb{R}^d$ be open, and consider $g_i\colon \mathbb{R}^m \times \mathbb{R}^d \longrightarrow \mathbb{R}$ so that the functions $g_i(\cdot,u)$ are continuous and convex for all $u \in \mathbb{R}^d$, and continuously differentiable for $u \in U$, for $i=1,\ldots, \ell$. For $u \in \mathbb{R}^d$, define the feasible set
		\[
		\mathcal F_u=\{z\in \R^m \colon g_i(z,u)\leq 0 \quad \forall \, i=1,\ldots,\ell\},
		\]
		which is closed and convex, and consider the following optimization problem: 
		\begin{equation}\label{eq:CvxP}
			\min \|z\| \st z\in \mathcal F_u.
		\end{equation}
		Clearly, the (unique) solution of \eqref{eq:CvxP} is the projection of $0$ onto $\mathcal F_u$, and we may thus tap into the rich theory developed for {\em projections onto moving sets}; we mainly draw from results collected in the monograph by Facchinei and Pang \cite{facchinei2003finite}. 
		
		To infer smoothness properties of the projection onto a parameterized (closed, convex) set, one needs to ensure   \emph{constraint qualifications (CQ)} for the feasible set at the point in  question. 
		To this end, define the  \textit{active set} of a point $z \in \mathcal F_u$  by 	$I_u(z) = \{i\in \{1,\ldots,\ell\}\colon g_i(z,u)=0\}.$
		
		\begin{itemize}
			\item We say that $\operatorname{MFCQ}$ \textit{(Mangasarian Fromovitz CQ)} holds at $z \in F_u$ for \eqref{eq:CvxP} if there is $w\in\R^m$ such that 
			\[\nabla_{1} g_i(z,u)^T w <0 \quad \forall \, i \in I_u(z),\]
			where $\nabla_{1}$ denotes the gradient with respect to the first variable.
			\item We say that $\operatorname{SCQ}$ \textit{(Slater CQ)} holds for \eqref{eq:CvxP} if there is $\hat{z} \in \R^m$ so that
			\[g_i(\hat{z},u)<0 \quad \forall \, i =1,\ldots,\ell.\]
			\item We say that $\operatorname{CRCQ}$ \textit{(Constant rank CQ)} holds at $z \in F_u$ for \eqref{eq:CvxP} if there is $\varepsilon>0$ such that $\forall \, \tilde{I} \subseteq I_u(z)$ and $\forall \, y \in F_u\cap B(z,\varepsilon)$ we have that $\{\nabla_1 g_i(y,u) \colon i \in \tilde{I}\}$ has constant rank.
		\end{itemize}
		The following result will be crucial for us. Recall that a function $\alpha \colon \mathbb{R}^d \longrightarrow \mathbb{R}^m$ is \textit{locally Lipschitz} at $ u \in \mathbb{R}^d$ if there is a neighborhood $U$ of $u$ and a constant $c>0$ so that
		\[ \|\alpha(v)-\alpha(w)\| \leq c \|v-w\| \quad \forall \, v,w \in  U.\]

		\begin{theorem}\label{theo: tim}
			Let $\alpha\colon \mathbb{R}^d \longrightarrow \mathcal \mathbb{R}^m$ be the solution map of \eqref{eq:CvxP}, i.e.,
			$
			\alpha(u)=\arg\min_{u\in \mathcal F_u}\|z\|,
			$
			and assume that $g_i$ is twice continuously differentiable in a neighborhood of $(\alpha(\bar u),\bar u)$. If $\operatorname{MFCQ}$ and $\operatorname{CRCQ}$ hold at $\alpha(\bar u)$ for \eqref{eq:CvxP}, then $\alpha$ is locally Lipschitz continuous at $\bar u$.
		\end{theorem}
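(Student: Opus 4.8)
The plan is to reduce the statement to a known sensitivity result for parametric optimization. The objective $\|z\|$ is convex but not differentiable at $z = 0$; to apply the standard machinery I would first note that $\alpha(\bar u) \neq 0$ generically, or — more robustly — work with the equivalent problem of minimizing $\tfrac12\|z\|^2$, which has the same (unique) minimizer over the convex set $\mathcal F_u$ and is smooth and strongly convex. So the real content is: the solution map of
\[
\min_{z} \tfrac12\|z\|^2 \quad \text{s.t.} \quad g_i(z,u) \le 0,\ i = 1,\dots,\ell,
\]
with $g_i(\cdot,u)$ convex and $g_i$ jointly $C^2$ near $(\alpha(\bar u),\bar u)$, is locally Lipschitz at $\bar u$ whenever MFCQ and CRCQ hold at $\alpha(\bar u)$.

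**The key steps, in order.** First, invoke the Karush–Kuhn–Tucker characterization: since the $g_i(\cdot,u)$ are convex and MFCQ holds at $\bar z := \alpha(\bar u)$, the point $\bar z$ is the unique minimizer iff there exists a (by MFCQ, bounded — and under the convexity here, in fact the KKT multiplier set is a compact polytope) multiplier $\bar\lambda \ge 0$ solving the KKT system $\bar z + \sum_i \bar\lambda_i \nabla_1 g_i(\bar z,\bar u) = 0$, $\bar\lambda_i g_i(\bar z,\bar u) = 0$. Second, package the KKT conditions for nearby $u$ as a parametric generalized equation / mixed complementarity problem in $(z,\lambda)$, and appeal to the stability theory in Facchinei–Pang \cite{facchinei2003finite}: under MFCQ together with CRCQ, the relevant solution map (the \emph{primal} component, which is all we need since $\alpha(u)$ is single-valued by strong convexity) is locally upper Lipschitz, and single-valuedness of $\alpha$ upgrades this to genuine local Lipschitz continuity. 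Concretely I would cite the results on solution maps of parametric variational inequalities / KKT systems under constant-rank-type conditions (e.g.\ the development around parametric nonlinear programs in \cite{facchinei2003finite}, where MFCQ $+$ CRCQ is exactly the hypothesis guaranteeing a Lipschitzian selection of the primal solution). Third, translate back: the $C^2$-in-a-neighborhood hypothesis on $g_i$ guarantees that the data of the KKT generalized equation are $C^1$, which is what those stability theorems require, and the convexity of $g_i(\cdot,u)$ for \emph{all} $u$ (not just near $\bar u$) guarantees that the feasible sets $\mathcal F_u$ stay convex and that KKT remains equivalent to global optimality, so no spurious stationary points appear to which the solution map could jump.

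**The main obstacle** is bookkeeping: locating the precise theorem in \cite{facchinei2003finite} whose hypotheses are \emph{exactly} MFCQ $+$ CRCQ (many variants use LICQ, or strong second-order conditions, or strict complementarity, all of which we want to avoid — LICQ in particular fails in our application because we will have many more active affine constraints than $m$), and checking that its conclusion is local Lipschitzness of the solution (not merely continuity or calmness at a single point, and not merely outer Lipschitz set-valued behavior). A secondary subtlety is the nonsmoothness of $\|z\|$ at the origin, which the $\tfrac12\|z\|^2$ reformulation cleanly sidesteps; I should remark that this reformulation is legitimate because $t \mapsto \tfrac12 t^2$ is strictly increasing on $[0,\infty)$, so the two problems have the identical minimizer. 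Everything else — compactness of the multiplier set from MFCQ, single-valuedness from strong convexity, the $C^1$ regularity of the KKT map from the $C^2$ hypothesis — is routine and can be stated without detailed computation.
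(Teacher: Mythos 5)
Your proposal is correct and takes essentially the same route as the paper: both reduce to the observation that $\alpha(u)$ is the Euclidean projection of $0$ onto the parametrized convex set $\mathcal F_u$ (your $\tfrac12\|z\|^2$ reformulation is exactly this), and both then appeal to Facchinei--Pang's sensitivity theory for such parametric projections under MFCQ and CRCQ. The paper simply cites \cite[Theorem 4.7.5]{facchinei2003finite} directly, which is the precise moving-projection result with exactly the MFCQ $+$ CRCQ hypotheses and local Lipschitz (indeed piecewise $C^1$) conclusion that your ``main obstacle'' paragraph is hunting for.
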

		
		\begin{proof} The proof follows immediately from \cite[Theorem 4.7.5]{facchinei2003finite} realizing that $\alpha$ is (on $U$) the (Euclidean) projector of $0$ onto $\mathcal F_u$.
		\end{proof}
	
		\begin{remark}
			Under the assumptions of Theorem \ref{theo: tim}, \cite[Theorem 4.7.5]{facchinei2003finite} actually shows that $\alpha$ is piecewise continuously differentiable near $\bar u$ in the following sense: $\alpha$ is continuous, and there exist an open neighborhood $U$ of $\bar u$, and a finite family of $\operatorname{C}^1$ functions $\{\alpha^1,\alpha^2,\ldots,\alpha^k\}$ defined on $U$ such that $\alpha(v)$ is an element of $\{\alpha^1(v),\ldots,\alpha^k(v)\}$ for all $v \in U$. This statement is stronger since $\operatorname{C}^1$ functions are locally Lipschitz.
		\end{remark}

		In order to apply this result to \eqref{optimization problem finite}, we need to verify it satisfies the necessary constraint qualifications at the point of question. To this end, the following standard result is useful (see \cite[Proposition 3.2.7]{facchinei2003finite}).
		
		\begin{lemma}\label{lem: SCQ implies MFCQ} If SCQ holds for \eqref{eq:CvxP}, then MFCQ holds at every point of the feasible set $\mathcal F_u$ of \eqref{eq:CvxP}.
		\end{lemma}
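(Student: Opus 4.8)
The plan is to run the classical short argument showing that Slater's condition, being a global strict-feasibility statement, forces the (local) Mangasarian--Fromovitz direction to exist at \emph{every} feasible point. Concretely, I would fix $u$ and an arbitrary point $z \in \mathcal F_u$, let $\hat z \in \mathbb{R}^m$ be a Slater point (so $g_i(\hat z, u) < 0$ for all $i = 1,\ldots,\ell$), and propose $w := \hat z - z$ as the MFCQ direction at $z$. The entire content of the proof then reduces to checking that $\nabla_1 g_i(z,u)^T w < 0$ for every active index $i \in I_u(z)$.

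To verify this I would invoke the first-order characterization of convexity. Since each $g_i(\cdot,u)$ is convex and differentiable in its first argument, the gradient inequality gives $g_i(\hat z, u) \geq g_i(z,u) + \nabla_1 g_i(z,u)^T(\hat z - z)$. For an active index $i \in I_u(z)$ we have $g_i(z,u) = 0$ by definition of the active set, so this reads $\nabla_1 g_i(z,u)^T w \leq g_i(\hat z, u) < 0$, which is exactly the strict inequality MFCQ requires. As $z \in \mathcal F_u$ was arbitrary, MFCQ holds at every point of $\mathcal F_u$, as claimed.

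There is essentially no obstacle here: the only hypothesis actually used is convexity together with differentiability of $g_i(\cdot,u)$, both of which hold under the standing assumptions (and in both applications, to \eqref{optimization problem finite} and \eqref{optimization problem infinite}, the relevant $g_i(\cdot,u)$ are in fact affine, hence smooth and convex). I would also note in passing that the existence of a Slater point makes $\mathcal F_u$ nonempty, so the statement is not vacuous, and that the result is exactly \cite[Proposition 3.2.7]{facchinei2003finite}, so one could alternatively dispense with the computation and simply cite it.
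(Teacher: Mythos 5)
Your proof is correct, and it is the standard argument: pick a Slater point $\hat z$, set $w = \hat z - z$, and use the gradient (first-order convexity) inequality $g_i(\hat z, u) \geq g_i(z,u) + \nabla_1 g_i(z,u)^T w$ to conclude $\nabla_1 g_i(z,u)^T w \leq g_i(\hat z,u) < 0$ for each active $i$. The paper itself offers no proof and simply cites \cite[Proposition 3.2.7]{facchinei2003finite}; what you have written out is precisely the argument that reference contains, so there is no substantive difference in approach.
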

		
	
	\subsection{Constraint qualifications hold for the optimization problem (\ref{optimization problem finite})}
	
	Let $X=\{x_1,\ldots,x_n\}\subseteq \mathbb{R}^d$. For $k=1,\ldots,n$, let $V_k$ be the \textit{Voronoi cell} of $x_k$, i.e., $V_k=\{u \in \mathbb{R}^d \colon \|u-x_k\|<\|u-x_j\| \mbox{ for all } j\neq k\}$. Fix $u \in V_k$ with $k=1,\ldots,n$, and recall that $\alpha\colon \mathbb{R}^d \longrightarrow \mathbb{R}^m$ is the solution map of (\ref{optimization problem finite}).
	
	First, we notice that the functions $g_i$ given by (\ref{eq: g_i 1}) and (\ref{eq: g_i 2}) meet the regularity requirements of Theorem \ref{theo: tim}. Indeed, if $U$ is an open neighborhood of $u$ contained in $V_k$, then $v_{NN}=x_k$ for any $v \in U$. Therefore, the $g_i$'s are twice continuously differentiable in $\mathbb{R}^m \times V_k$. Moreover, the $g_i$'s are affine in the first variable, and thus, are continuous and convex. Furthermore, from the affine nature of $g_i(\cdot,u)$, it is easy to deduce that $\operatorname{CRCQ}$ holds at $\alpha(u)$ for (\ref{optimization problem finite}) (see the discussion before \cite[Lemma 3.2.8]{facchinei2003finite}). It remains to show that $\operatorname{MFCQ}$ holds at $\alpha(u)$ for (\ref{optimization problem finite}). In view of Lemma \ref{lem: SCQ implies MFCQ}, it suffices to show that $\operatorname{SCQ}$ holds for the optimization problem (\ref{optimization problem finite}). Notice that for $u \in V_k$, i.e. $u_{NN}=x_k$, the constraints $g_k(\cdot,u)$ and $g_{n+k}(\cdot,u)$ given by (\ref{eq: g_i 1}) and (\ref{eq: g_i 2}) vanish. Therefore, we cannot expect $\operatorname{SCQ}$ to hold for (\ref{optimization problem finite}). This issue can be easily avoided by removing these two trivial constraints, which does not alter the feasible set locally around the point of question.

	\begin{lemma}\label{lem: MFCQ holds at u'}
		Fix $u \in V_k\setminus \{x_k\}$ for some $k=1,\ldots,n$. Consider the optimization problem obtained after removing the constraints $g_k$ and $g_{n+k}$ from (\ref{optimization problem finite}), and denote its solution by $\alpha_k(u)$. Then, $\operatorname{SCQ}$ holds for this optimization problem. Consequently, $\operatorname{MFCQ}$ holds at $\alpha_k(u)$ for this optimization problem.
	\end{lemma}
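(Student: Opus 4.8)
Since the constraints of (\ref{optimization problem finite}) are affine in $z$, $\operatorname{SCQ}$ for the program obtained by deleting $g_k,g_{n+k}$ amounts to exhibiting a single strictly feasible point $\hat z\in\mathbb{R}^m$, after which $\operatorname{MFCQ}$ at $\alpha_k(u)$ follows at once from Lemma \ref{lem: SCQ implies MFCQ}. So the whole task is to build such a $\hat z$ (when $n=1$ the reduced program has no constraints and there is nothing to prove, so assume $n\ge 2$). First I would rewrite the surviving constraints: because $u\in V_k$ we have $u_{NN}=x_k$, so by the definitions (\ref{eq: g_i 1}) and (\ref{eq: g_i 2}) the constraints of (\ref{optimization problem finite}) that remain after deleting $g_k$ and $g_{n+k}$ read
\[
\big|\langle z,\Pi(x_i-x_k)\rangle - \langle u-x_k,\, x_i-x_k\rangle\big| \;\le\; \tfrac{\varepsilon}{10}\,\|u-x_k\|\,\|x_i-x_k\|, \qquad i\in\{1,\dots,n\}\setminus\{k\},
\]
and $\operatorname{SCQ}$ asks for a $\hat z$ turning each of these into a strict inequality. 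This is where the hypothesis $u\neq x_k$ is used: for $u=x_k$ both sides vanish and no strict point can exist, which is precisely why the two trivial constraints were removed.

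To produce $\hat z$, I would apply Lemma \ref{lem: narayan u'} to the $n-1$ nonzero vectors $\{x_i-x_k : i\neq k\}$ and the point $u-x_k$, with distortion parameter $\tfrac{\varepsilon}{60}$. The hypothesis needed is that $\Pi$ provides $\tfrac{\varepsilon}{60}$-convex hull distortion for $V:=\{\pm(x_i-x_k)/\|x_i-x_k\| : i\neq k\}$; this holds because every element of $V$ is a unit secant of $X$, so $V\subseteq S_X$, hence $\operatorname{conv}(V)\subseteq\operatorname{conv}(S_X)$, and $\Pi$ provides $\tfrac{\varepsilon}{60}$-convex hull distortion for $S_X$ by assumption. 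The lemma then returns $\hat z$ with $\|\hat z\|\le\|u-x_k\|$ and $\big|\langle \hat z,\Pi(x_i-x_k)\rangle - \langle u-x_k,x_i-x_k\rangle\big|\le\tfrac{\varepsilon}{60}\,\|u-x_k\|\,\|x_i-x_k\|$ for every $i\neq k$. Now $u\in V_k\setminus\{x_k\}$ forces $\|u-x_k\|>0$ and distinctness of the $x_i$ forces $\|x_i-x_k\|>0$ for $i\neq k$, so $\tfrac{\varepsilon}{60}\|u-x_k\|\|x_i-x_k\| < \tfrac{\varepsilon}{10}\|u-x_k\|\|x_i-x_k\|$; consequently $g_i(\hat z,u)<0$ and $g_{n+i}(\hat z,u)<0$ for all $i\neq k$. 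Hence $\hat z$ is a Slater point, $\operatorname{SCQ}$ holds, and Lemma \ref{lem: SCQ implies MFCQ} yields $\operatorname{MFCQ}$ at every feasible point of the reduced program, in particular at its solution $\alpha_k(u)$.

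The only genuinely delicate point is passing from a merely \emph{feasible} point --- which Lemma \ref{lem: narayan u'} supplies with its inequalities possibly holding with equality --- to a \emph{strictly} feasible one, which requires some slack. The slack used above is simply the factor $6$ separating the convex-hull distortion level $\tfrac{\varepsilon}{60}$ assumed of $\Pi$ from the coefficient $\tfrac{\varepsilon}{10}$ appearing in the $g_i$. (An alternative route: $\operatorname{conv}(S_X)$ is compact, so $\Pi$ in fact provides $\tfrac{\varepsilon_0}{60}$-convex hull distortion for some $\varepsilon_0<\varepsilon$, and one may then apply Lemma \ref{lem:u'} with parameter $\tfrac{\varepsilon_0}{10}$.) Everything else is routine bookkeeping.
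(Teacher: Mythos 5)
Your proposal is correct and takes essentially the same approach as the paper: observe that once $g_k$ and $g_{n+k}$ are removed all surviving constraints have strictly positive right-hand side, exhibit a Slater point using an existing feasibility lemma, and conclude MFCQ from Lemma \ref{lem: SCQ implies MFCQ}. The only cosmetic difference is that you invoke Lemma \ref{lem: narayan u'} directly with distortion parameter $\varepsilon/60$ (yielding slack $\varepsilon/60 < \varepsilon/10$), while the paper cites Lemma \ref{lem:u'}; both exploit the same gap between the $\varepsilon/60$-convex-hull-distortion hypothesis on $\Pi$ and the $\varepsilon/10$ coefficient appearing in the constraints.
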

	
	\begin{proof}
		First, observe that $\|u-u_{NN}\|\|x_i-u_{NN}\| > 0$ for any $i=1,\ldots,n$ with $i\neq k$. Therefore, Lemma \ref{lem:u'} guarantees the existence of a point $\hat{z} \in \R^m$ satisfying
		\begin{align*}\big{|}\langle \hat{z},\Pi (x_i-u_{NN})\rangle - \langle u-u_{NN},x_i-u_{NN} \rangle \big{|} &\leq \frac{\varepsilon}{20}\|u-u_{NN}\|\|x_i-u_{NN}\|\\
			& < \frac{\varepsilon}{10}\|u-u_{NN}\|\|x_i-u_{NN}\|
		\end{align*}
		for all $i=1,\ldots,n$ with $i\neq k$. This implies that $g_i(\hat{z},u)<0$ for all $i=1,\ldots,2n$ with $i\neq k,n+k$, whence $\operatorname{SCQ}$ holds for the optimization problem. The rest of the statement follows from Lemma \ref{lem: SCQ implies MFCQ}.
	\end{proof}
	

	\subsection{Proof of Theorem \ref{theo: main finite}}
	
	First, we show that the solution map $\alpha\colon \mathbb{R}^d \longrightarrow \mathbb{R}^m$ of the optimization problem (\ref{optimization problem finite}) is locally Lipschitz almost everywhere. This follows as an application of Theorem \ref{theo: tim}.
	
	\begin{corollary}\label{cor: u' is local lips}
		Let $X=\{x_1,\ldots,x_n\}$ be a finite subset of $\mathbb{R}^d$, let $V_k$ be the Voronoi cell of $x_k$ for $k=1,\ldots,n$, and write $V=\cup_{i=1}^n V_i$. Let $\alpha \colon \mathbb{R}^d  \longrightarrow \mathbb{R}^m$ be the solution map of \eqref{optimization problem finite}. 
		Then, $\alpha$ is locally Lipschitz  at every point in $V \setminus X$. In particular, $\alpha$ is locally Lipschitz almost everywhere on $\mathbb{R}^d$.
	\end{corollary}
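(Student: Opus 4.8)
The plan is to fix an arbitrary $\bar u \in V \setminus X$, reduce the optimization problem \eqref{optimization problem finite} to a version whose solution coincides locally with $\alpha$ and for which the hypotheses of Theorem~\ref{theo: tim} can be verified, and then finish with a measure-theoretic observation. Since $\bar u \in V = \bigcup_{i=1}^n V_i$ and $\bar u \notin X$, there is a unique $k$ with $\bar u \in V_k \setminus \{x_k\}$. As $V_k$ is open, I would pick an open neighborhood $U$ of $\bar u$ with $U \subseteq V_k$; on $U$ the nearest-neighbor map is constant, namely $v_{NN} = x_k$ for every $v \in U$. Consequently each $g_i$ defined by \eqref{eq: g_i 1}--\eqref{eq: g_i 2} is affine in $z$ and smooth in $u$ on $\mathbb{R}^m \times U$, hence in particular twice continuously differentiable near $(\alpha(\bar u),\bar u)$, as Theorem~\ref{theo: tim} requires. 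Moreover, with $u_{NN} = x_k$ the two constraint functions $g_k(\cdot,u)$ and $g_{n+k}(\cdot,u)$ vanish identically for every $u \in U$, so deleting them leaves the feasible set $F_u$ unchanged for all $u \in U$; thus the solution map $\alpha_k$ of the reduced problem agrees with $\alpha$ on $U$.

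Next I would check the constraint qualifications for the reduced problem at $\alpha_k(\bar u) = \alpha(\bar u)$. Because the $g_i(\cdot,u)$ are affine, the gradients $\nabla_1 g_i$ are independent of the base point, so $\operatorname{CRCQ}$ holds automatically. For $\operatorname{MFCQ}$ I would invoke Lemma~\ref{lem: MFCQ holds at u'}: for $\bar u \in V_k \setminus \{x_k\}$ the reduced problem satisfies $\operatorname{SCQ}$, hence by Lemma~\ref{lem: SCQ implies MFCQ} $\operatorname{MFCQ}$ holds at every point of its feasible set, in particular at $\alpha(\bar u)$. Theorem~\ref{theo: tim} then yields that $\alpha_k$ is locally Lipschitz at $\bar u$, and since $\alpha = \alpha_k$ on the neighborhood $U$, so is $\alpha$. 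This establishes local Lipschitz continuity of $\alpha$ at every point of $V \setminus X$.

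It then remains to observe that $V \setminus X$ has full Lebesgue measure: its complement $(\mathbb{R}^d \setminus V) \cup X$ is contained in $X \cup \bigcup_{i \neq j} \{u \in \mathbb{R}^d : \|u-x_i\| = \|u-x_j\|\}$, the union of a finite set and finitely many hyperplanes, hence is null. Therefore $\alpha$ is locally Lipschitz almost everywhere on $\mathbb{R}^d$, which proves the corollary.

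The genuinely delicate point is the reduction step at the start: one must confirm that the two trivially satisfied constraints $g_k, g_{n+k}$ may be dropped without changing $\alpha$ on a neighborhood of $\bar u$, since this is precisely what circumvents the (unavoidable) failure of $\operatorname{SCQ}$ for the full problem \eqref{optimization problem finite}. Once this bookkeeping is in place, the rest is a direct application of the optimization-theoretic machinery (Theorem~\ref{theo: tim}, Lemmas~\ref{lem: MFCQ holds at u'} and~\ref{lem: SCQ implies MFCQ}) together with the observation that restricting to $U \subseteq V_k$ makes all data smooth because $u_{NN}$ becomes constant there.
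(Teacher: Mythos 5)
Your proof is correct and follows essentially the same approach as the paper: restrict to a neighborhood inside a Voronoi cell where $u_{NN} \equiv x_k$, drop the two trivially vanishing constraints $g_k, g_{n+k}$ so that $\operatorname{SCQ}$ (hence $\operatorname{MFCQ}$) can hold, note $\operatorname{CRCQ}$ is automatic from affineness, apply Theorem~\ref{theo: tim}, and finish with the observation that $\mathbb{R}^d \setminus (V \setminus X)$ is a null set. The only cosmetic difference is that the paper spells out the hyperplane argument for each bisector set $V_{i,j}$ explicitly, whereas you state it in one line.
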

	
	\begin{proof}
		Fix $k=1,\ldots,n$ and $u=\bar u \in V_k\setminus\{x_k\}$. Observe that $v_{NN}=x_k$ for any $v \in V_k$. Thus, the functions $g_i$ given by (\ref{eq: g_i 1}) and (\ref{eq: g_i 2}) are twice continuously differentiable on $\mathbb{R}^m \times V_k$ and affine (hence continuous and convex) in the first variable. Consider the optimization problem obtained from removing the constraints $g_k$ and $g_{n+k}$ from (\ref{optimization problem finite}), and let $\alpha_k\colon \mathbb{R}^d \longrightarrow \mathbb{R}^m$ denote its solution map. Then Lemma \ref{lem: MFCQ holds at u'} shows that MFCQ holds at $\alpha_k(u)$ for this optimization problem. Additionally, CRCQ also holds at $\alpha_k(u)$ for it since $g_i(\cdot,u)$ are affine functions for $i=1,\ldots,2n$ with $i\neq k,n+k$. Therefore, we can apply Theorem \ref{theo: tim} to obtain that $\alpha_k$ is locally Lipschitz at $u$.
		
		Recall that $\alpha \colon \mathbb{R}^d \longrightarrow \mathbb{R}^m$ is the solution map of (\ref{optimization problem finite}). Observe that for any $v \in V_k$, the constraints $g_k(\cdot,v)$ and $g_{n+k}(\cdot,v)$ vanish since $v_{NN}=x_k$. Therefore, removing these constraints from the optimization problem does not change the feasible set. In particular, $\alpha(v)=\alpha_k(v)$ for any $v \in V_k$. In other words, the mappings $\alpha$ and $\alpha_k$ are equal on $V_k$.
		
		Finally, we show that $\mathbb{R}^d \setminus V=:\stcomp{V}$ has measure zero. Observe that if $u \in \stcomp{V}$, then there are $i$, $j \in \{1,\ldots,n\}$ with $i\neq j$ such that $\|u-x_i\|=\|u-x_j\|$. Therefore, we have $\stcomp{V} \subseteq \cup_{i<j=1}^n V_{i,j}$, where 
    \[V_{i,j} = \{ u \in \mathbb{R}^d \colon \|u-x_i\|=\|u-x_j\|\} \quad \forall \, i<j=1,\ldots,n.\]

    Let $H$ be the hyperplane formed by all $u \in \mathbb{R}^d$ that are orthogonal to $w\coloneq (x_j-x_i)/2$. We claim that $V_{i,j}$ is contained in $(x_i+x_j)/2 + H$, which implies that $V_{i,j}$ has measure zero, from where it follows that $\stcomp{V}$ also has measure zero. To prove the claim,  it suffices to show that if $u +(x_i+x_j)/2 \in V_{i,j}$ then $u \in H$. Notice that $u +(x_i+x_j)/2 - x_i = u+w$ and $u +(x_i+x_j)/2 - x_j = u-w$. Therefore, under such assumption we have
    \[ \|u-w\|^2=\|u+w\|^2.\]
    Thus, the polarization identity implies that
    \[ \langle u,w\rangle = \langle u,-w\rangle=-\langle u,w\rangle,\]
    and we conclude that $u$ and $w$ are orthogonal.
	\end{proof}

	Notice that the approach used to prove Corollary \ref{cor: u' is local lips} does not work for points in $\stcomp{V}$. Indeed, the mapping $u \mapsto u_{NN}$ is not continuous at such points. Therefore, the constraints $g_i$ do not satisfy the regularity requirements to apply Theorem \ref{theo: tim}. Similarly, the mapping $u \mapsto \|u-u_{NN}\|$ is not differentiable at points $u \in \mathbb{R}^d$ with $u=u_{NN}$. Thus, our approach also fails for points in $X$.

	We are finally ready to prove Theorem \ref{theo: main finite}. 
	
	\begin{proof}[Proof of Theorem \ref{theo: main finite}]
		Let $\varepsilon \in (0,1)$. Recall that the terminal embedding $f\colon \mathbb{R}^d \longrightarrow \mathbb{R}^{m+1}$ provided by Lemma \ref{lem: is terminal} is defined as
		\[f(u)=\begin{cases}
			\big{(}\Pi u_{NN} +\alpha(u),\sqrt{\|u-u_{NN}\|^2 - \|\alpha(u)\|^2} \big{)} & \mbox{if }  u \in \mathbb{R}^d \setminus \overline{X};\\
			\big{(}\Pi u ,0\big{)} & \mbox{if } u \in \overline{X},
		\end{cases}\]
		where $\Pi \in \mathbb{R}^{d \times m}$ provides $\frac{\varepsilon}{60}$-convex hull distortion for $S_X$, $u_{NN}$ is a point from $\overline{X}$ at minimal distance from $u$, and $\alpha \colon \mathbb{R}^d \longrightarrow \mathbb{R}^m$ assigns $u \in \mathbb{R}^d$ to the solution of the optimization problem (\ref{optimization problem finite}). Let $u \in V \setminus X$ and write $x_k=u_{NN}$ and $V_k$ for its Voronoi cell. Let $p\colon \mathbb{R}^d \longrightarrow \mathbb{R}$ defined by $p(u)=\|u-u_{NN}\|^2 - \|\alpha(u)\|^2$. Then, for any points $v$, $w$ in $V_k$ we have that
		\[\|f(v)-f(w)\|^2= \|\alpha(v)-\alpha(w)\|^2 + \left |\sqrt{p(v)}-\sqrt{p(w)}\right |^2 \leq \|\alpha(v)-\alpha(w)\|^2 + |p(v)-p(w)|. \]
		As Corollary \ref{cor: u' is local lips} shows, $\alpha$ is locally Lipschitz at $u$, that is, there are a constant $\widetilde{D}_u>0$ and $r_u \in (0, 1/2)$ such that $\|\alpha(v)-\alpha(w)\| \leq \widetilde{D}_u \|v-w\|$ for any $v$, $w \in B(u,r_u)\cap V_k$. Thus, it remains to bound $|p(v)-p(w)|$. First, note that
		\begin{align*}
			\big{|}\|v-v_{NN}\|^2 -\|w-w_{NN}\|^2\big{|} &= \big{(}\|v-x_k\| +\|w-x_k\|\big{)}\big{|}\|v-x_k\| -\|w-x_k\|\big{|}\\
			&\leq \big{(}\|v-x_k\| +\|w-x_k\|\big{)}\|v-w\| \leq \big{(}1+2\|u-x_k\|\big{)}\|v-w\|.
		\end{align*}
		Furthermore, we have
		\begin{align*}\big{|}\|\alpha(v)\|^2 - \|\alpha(w)\|^2\big{|}&=\big{(}\|\alpha(v)\|+\|\alpha(w)\|\big{)}\big{|}\|\alpha(v)\|-\|\alpha(w)\|\big{|} \leq \big{(}\|\alpha(v)\|+\|\alpha(w)\|\big{)}\|\alpha(v)-\alpha(w)\|\\
			&\leq \big{(}\|v-x_k\|+\|w-x_k\|\big{)} \|\alpha(v)-\alpha(w)\|\leq \big{(}1+2\|u-x_k\|\big{)}\widetilde{D}_u\|v-w\|.
		\end{align*}
		Therefore, the triangle inequality yields
		\[ |p(v)-p(w)| \leq (1+\widetilde{D}_u)(1+2\|u-x_k\|)\|v-w\|.\]
		We conclude that for any $v$, $w \in B(u,r_u)$ we have
		\[ \|f(v)-f(w)\| \leq \|\alpha(v)-\alpha(w)\| + |p(v)-p(w)|^{1/2} \leq \left (\widetilde{D}_u + \sqrt{(1+\widetilde{D}_u)(1+2\|u-x_k\|)}\right )\|v-w\|^{1/2}. \qedhere\]
	\end{proof}

	\section{General case (proof of Theorem \ref{theo: main infinite})}\label{sec 5}
	
	This section is dedicated to proving Theorem \ref{theo: main infinite}. Recall that $\beta \colon \mathbb{R}^d \longrightarrow \mathbb{R}^m$ maps $u \in \mathbb{R}^d$ to the solution of the optimization problem  (\ref{optimization problem infinite}). As in Section \ref{sec 4}, we first analyze the regularity of the map $\beta$. Then, we infer regularity for a terminal embedding of the form (\ref{eq: terminal embedding}) from it, where $u'=\beta(u)$. It is important to note that for general sets $X \subseteq \mathbb{R}^d$, the functions $\tilde{g_i}$ appearing in the optimization problem (\ref{optimization problem infinite}) might not be differentiable. Indeed, the map $u \mapsto u_{NN}$ is generally not differentiable. Therefore, we cannot apply Theorem \ref{theo: tim} to obtain that $\beta$ is locally Lipschitz. However, when we restrict to $\widetilde{X}=X+B(0,\tau_X/2)$, part (8) of \cite[Theorem 4.8]{federer1959curvature}) shows that $u\mapsto u_{NN}$ is Lipschitz with
	\begin{equation}\label{eq: u_NN Lipschitz} \|u_{NN}-v_{NN}\|\leq 2 \|u-v\| \quad \forall \, u,v \in \widetilde{X}.
	\end{equation}
	We will see that (\ref{eq: u_NN Lipschitz}) is enough to guarantee that $\beta$ is $\frac{1}{2}$-H\"{o}lder continuous on $\widetilde{X}$.
	
	\begin{theorem}\label{theo: u' holder}
		Let $X$ be a subset of $\mathbb{R}^d$ with positive reach $\tau_X$ and write $\widetilde{X}=X+ B(0,\tau_X/2)$. For $\varepsilon \in (0,1)$, let $\beta \colon \widetilde{X} \longrightarrow \mathbb{R}^m$ assign $u \in \widetilde{X}$ to the solution of the optimization problem (\ref{optimization problem infinite}). Then,  $\beta$ is locally $\frac{1}{2}$-H\"{o}lder continuous on $\widetilde{X}$, that is, for any $u \in \widetilde{X}$ there is a neighborhood $U' \subseteq \mathbb{R}^d$ of $u$ so that for all $w, v \in U'$ we have
		\begin{equation}\label{eq: theo holder}
			\|\beta(w)-\beta(v)\| \leq C'_u \|\beta(w)-\beta(v)\|^{1/2},
		\end{equation}
		where $C'_u>0$ is a constant that only depends on the distance between $u$ and the set $X$ and on $\varepsilon$.
	\end{theorem}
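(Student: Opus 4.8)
The plan is to exploit the fact that, for each fixed $u$, the feasible set of $(\widetilde{\operatorname{P}}_u)$ is a polyhedron whose \emph{constraint matrix does not depend on} $u$. Writing $A\in\mathbb{R}^{2\ell\times m}$ for the matrix with rows $(\Pi w_i)^{T}$ and $-(\Pi w_i)^{T}$, $i=1,\dots,\ell$, and setting
\[
b(u)=\Bigl(\langle u-u_{NN},w_i\rangle+\tfrac{\varepsilon}{30}\|u-u_{NN}\|,\ -\langle u-u_{NN},w_i\rangle+\tfrac{\varepsilon}{30}\|u-u_{NN}\|\Bigr)_{i=1}^{\ell}\in\mathbb{R}^{2\ell},
\]
one has $\widetilde{F}_u=\{z\in\mathbb{R}^m:Az\le b(u)\}$, and $\beta(u)$ is by definition the Euclidean projection of $0$ onto $\widetilde{F}_u$. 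Thus $\beta$ factors through the data map $u\mapsto b(u)$ followed by the solution map $b\mapsto\operatorname{proj}_{\{z:Az\le b\}}(0)$, and I would establish the local $\tfrac12$-H\"older property of $\beta$ by controlling these two pieces in turn: $u\mapsto b(u)$ will be Lipschitz on $\widetilde{X}$; the set-valued map $u\mapsto\widetilde{F}_u$ will be Lipschitz for the Hausdorff distance $d_H$; and the metric projection of a fixed point onto a moving closed convex set will be shown to be $\tfrac12$-H\"older with respect to $d_H$-perturbations of the set, with constant governed by the distance from the point to the set.

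First I would check that $u\mapsto b(u)$ is Lipschitz on $\widetilde{X}$: by \eqref{eq: u_NN Lipschitz} the map $u\mapsto u_{NN}$ is $2$-Lipschitz there, hence $u\mapsto u-u_{NN}$ is $3$-Lipschitz and $u\mapsto\|u-u_{NN}\|=d(u,X)$ is $1$-Lipschitz; since $\|w_i\|=1$, each coordinate of $b$, and therefore $b$ itself, is Lipschitz on $\widetilde{X}$ with an explicit constant $L_1$. Next, since every $\widetilde{F}_u$ is nonempty (Lemma \ref{lem:u'}) and all of them share the common recession cone $\{z:Az\le 0\}$, Hoffman's classical error bound for systems of linear inequalities (equivalently, the Walkup--Wets theorem on the Lipschitzian behaviour of polyhedra with a fixed coefficient matrix) provides a constant $\mu(A)\ge 0$ with
\[
d_H(\widetilde{F}_u,\widetilde{F}_v)\ \le\ \mu(A)\,\|b(u)-b(v)\|\ \le\ L\,\|u-v\|,\qquad L:=\mu(A)\,L_1,
\]
for all $u,v\in\widetilde{X}$. (One could instead bound $d_H$ by hand using the description of $\widetilde{F}_u$ as an intersection of slabs, but invoking the polyhedral stability bound is the cleanest route.)

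The remaining ingredient is an elementary perturbation estimate for metric projections: if $C,C'\subseteq\mathbb{R}^m$ are closed convex with $d_H(C,C')\le\delta$, then for every $x$,
\[
\bigl\|\operatorname{proj}_C x-\operatorname{proj}_{C'}x\bigr\|^2\ \le\ \bigl(\|x-\operatorname{proj}_C x\|+\|x-\operatorname{proj}_{C'}x\|\bigr)\,\delta\ \le\ \bigl(2\,d(x,C)+\delta\bigr)\,\delta,
\]
which follows by expanding $\|\operatorname{proj}_C x-\operatorname{proj}_{C'}x\|^2$ and applying the two variational inequalities $\langle x-\operatorname{proj}_C x,\,y-\operatorname{proj}_C x\rangle\le 0$ ($y\in C$) and its analogue for $C'$ at points of each set lying within $\delta$ of the other projection. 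Taking $x=0$, $C=\widetilde{F}_v$, $C'=\widetilde{F}_w$ for $v,w$ in a bounded neighbourhood $U'\subseteq\widetilde{X}$ of a fixed $u$, and using $d(0,\widetilde{F}_v)=\|\beta(v)\|\le\|v-v_{NN}\|=d(v,X)$ (this is \eqref{eq: constraint_norm}, valid since $\beta(v)$ is the minimal-norm feasible point) together with $\delta\le L\|v-w\|\le L\operatorname{diam}(U')$, one obtains
\[
\|\beta(v)-\beta(w)\|\ \le\ \sqrt{\Bigl(2\sup_{y\in U'}d(y,X)+L\operatorname{diam}(U')\Bigr)L}\ \ \|v-w\|^{1/2}\qquad\forall\,v,w\in U',
\]
i.e.\ the asserted local $\tfrac12$-H\"older bound. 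A judicious choice of $U'$ (a ball around $u$ of radius controlled by $d(u,X)$ and $\tau_X$) makes the resulting constant $C'_u$ depend on $u$ only through $d(u,X)$ — and on $\varepsilon$ through the cover radius, the convex-hull-distortion level, and $\mu(A)$ — which is what Remark \ref{rem: constants theo holder} will record explicitly.

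I expect the genuine difficulty to be the second step. In the finite case the regularity of the solution map came from the smooth-parametrization theory behind Theorem \ref{theo: tim}, which is unavailable here because $u\mapsto u_{NN}$ is merely Lipschitz and hence the constraint functions $\tilde g_i(\cdot,u)$ fail to be differentiable in $u$. The reason $(\widetilde{\operatorname{P}}_u)$ is formulated with a \emph{$u$-independent} constraint matrix — only the right-hand side $b(u)$ moves, and it moves Lipschitz-continuously thanks to \eqref{eq: u_NN Lipschitz} — is precisely to route around this obstruction and bring the feasible-set map within the scope of polyhedral Lipschitz-stability. Using the soft projection bound above, rather than the piecewise-linear structure of the strongly convex quadratic program $\beta(u)=\operatorname{argmin}\{\|z\|:Az\le b(u)\}$, costs one H\"older exponent, which is why one ends up with $\tfrac12$-H\"older regularity for $\beta$ — and, after composing with the square-root coordinate in \eqref{eq: terminal embedding}, $\tfrac14$-H\"older regularity for the terminal embedding $f$.
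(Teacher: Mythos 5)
Your decomposition---a Lipschitz data map $u\mapsto b(u)$, polyhedral Hausdorff stability of $\widetilde{F}_u=\{z:Az\le b(u)\}$, and a projection-perturbation estimate---is structurally sound and conceptually cleaner than the paper's route, which first treats the special case $w=u$ (Lemma \ref{lem: holder}) and then patches the general case together via a Lipschitz estimate on the local radii and constants. Your projection-perturbation inequality is correct (expand $\|\operatorname{proj}_C x-\operatorname{proj}_{C'}x\|^2$ and apply the variational inequality at nearby points of each set) and accomplishes in one step what Lemmas \ref{lem: circle} and \ref{lem: zv-v'} do in two; and your observation that $A$ does not depend on $u$---only $b(u)$ moves, and it moves Lipschitzly on $\widetilde{X}$ thanks to \eqref{eq: u_NN Lipschitz}---is exactly the structural point that makes the infinite case tractable.

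There is, however, a gap relative to the precise claim. Invoking Hoffman's error bound or Walkup--Wets as a black box yields a Hausdorff-Lipschitz modulus $L=\mu(A)L_1$, so your final constant $C'_u$ contains the Hoffman constant $\mu(A)$, a quantity depending on the particular matrix $\Pi$ and cover $\mathcal{C}=\{w_1,\dots,w_\ell\}$---not only on $\varepsilon$ and $d(u,X)$, as the theorem asserts and Remark \ref{rem: constants theo holder} records explicitly. The paper avoids this by constructing, for each $u$, a quantitative Slater point $z^*$ with $\|z^*\|\le\|u-u_{NN}\|$ and margin $\tilde{g}_i(z^*,u)\le -\tfrac{\varepsilon}{60}\|u-u_{NN}\|$ (Lemma \ref{lem: z*}); combined with the $4$-Lipschitz bound on the constraints in the second variable (Lemma \ref{lem: Delta}), the interpolant $z_v=(1-\delta_v)\beta(u)+\delta_v z^*$ lands in $\widetilde{F}_v$ at distance $O(\varepsilon^{-1}\|u-v\|)$ from $\beta(u)$---a one-sided Hausdorff-type bound with an explicit, $\mu(A)$-free constant that plays the role of a Robinson/Slater regularity certificate here. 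To recover the claimed form of $C'_u$, replace the generic Hoffman step with this Slater-point estimate; as written, your argument establishes local $\tfrac12$-H\"{o}lder continuity of $\beta$, but with a constant of a different (and less controllable) form than what the theorem promises.
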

	
	Theorem \ref{theo: main infinite} will follow from Theorem \ref{theo: u' holder}, since (\ref{eq: theo holder}) implies that a terminal embedding of the form (\ref{eq: terminal embedding}), where $u'=\beta(u)$, is $\frac{1}{4}$-H\"{o}lder continuous (see Subsection~\ref{ss:proof_main_infinite}).
	
	\subsection{Theorem \ref{theo: u' holder} when $w=u$.}
	Before proving Theorem \ref{theo: u' holder}, we will prove the following special case.
	
	\begin{lemma}\label{lem: holder}
		Let $X$ be a subset of $\mathbb{R}^d$ with positive reach $\tau_X$ and write $\widetilde{X}=X+ B(0,\tau_X/2)$. For $\varepsilon \in (0,1)$, let $\beta \colon \widetilde{X} \longrightarrow \mathbb{R}^m$ assign $u \in \widetilde{X}$ to the solution of the optimization problem (\ref{optimization problem infinite}). Then, there is a neighborhood $\widetilde{U} \subseteq \mathbb{R}^d$ of $u$ so that for all $v \in \widetilde{U}$ we have
		\begin{equation}\label{eq: theo holder w=u}
			\|\beta(u)-\beta(v)\| \leq \widetilde{C}_u \|u-v\|^{1/2},
		\end{equation}
		where $\widetilde{C}_u>0$ is a constant that only depends on the distance between $u$ and the set $X$ and on $\varepsilon$.
	\end{lemma}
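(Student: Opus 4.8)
The plan is to regard $\beta(u)$ as the Euclidean projection $P_{\widetilde F_u}(0)$ of the origin onto the polyhedron $\widetilde F_u=\{z\in\mathbb R^m : \tilde g_i(z,u)\le 0,\ i=1,\dots,2\ell\}$, and to exploit that, as $u$ varies over $\widetilde X$, the constraint normals of \eqref{optimization problem infinite} — namely $\pm\Pi w_1,\dots,\pm\Pi w_\ell$ — stay fixed; only the affine right-hand sides move, and they move Lipschitzly in $u$. First I would fix $u\in\widetilde X$, pick a neighborhood $\widetilde U=B(u,r_0)\subseteq\widetilde X$ with $r_0\le 1$, and record that for $v\in\widetilde U$ and any $z\in\mathbb R^m$, a direct estimate from \eqref{eq: g_i tilde 1}--\eqref{eq: g_i tilde 2} — using $\|w_i\|=1$, the $1$-Lipschitz continuity of $d(\cdot,X)$ together with $\|u-u_{NN}\|=d(u,X)$, the Lipschitz bound \eqref{eq: u_NN Lipschitz} (this is where positive reach and the restriction to $\widetilde X$ enter), and $\varepsilon<1$ — gives
\[
\bigl|\tilde g_i(z,v)-\tilde g_i(z,u)\bigr|\ \le\ 3\|u-v\|+\tfrac{\varepsilon}{30}\|u-v\|\ \le\ 4\|u-v\|,\qquad i=1,\dots,2\ell .
\]
Since $\tilde g_i(\beta(u),u)\le 0$ and $\tilde g_i(\beta(v),v)\le 0$, this makes $\beta(u)$ almost feasible for $\widetilde F_v$ (each constraint violated by at most $4\|u-v\|$) and, symmetrically, $\beta(v)$ almost feasible for $\widetilde F_u$.

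Next I would invoke Hoffman's error bound for finite systems of linear inequalities (see, e.g., \cite{facchinei2003finite}): there is a constant $\kappa>0$ depending only on the fixed family of normals $\pm\Pi w_1,\dots,\pm\Pi w_\ell$ — hence, once $X$ and $\varepsilon$ are fixed, not on the parameter $u$ nor on the right-hand sides — such that $d(z,\widetilde F_v)\le\kappa\max_i[\tilde g_i(z,v)]_+$ for every $z$ whenever $\widetilde F_v\neq\emptyset$. The feasible sets are nonempty by Lemma~\ref{lem:u'}, so I obtain $z_v\in\widetilde F_v$ with $\|z_v-\beta(u)\|\le 4\kappa\|u-v\|$ and $z_u\in\widetilde F_u$ with $\|z_u-\beta(v)\|\le 4\kappa\|u-v\|$. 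Then I would combine this with the variational inequalities for the projections, $\langle\beta(u),z-\beta(u)\rangle\ge 0$ for all $z\in\widetilde F_u$ and $\langle\beta(v),z-\beta(v)\rangle\ge 0$ for all $z\in\widetilde F_v$, applied at $z=z_u$ and $z=z_v$ respectively. Writing
\[
\|\beta(u)-\beta(v)\|^2=\langle\beta(u),\beta(u)-z_u\rangle+\langle\beta(u),z_u-\beta(v)\rangle+\langle\beta(v),\beta(v)-z_v\rangle+\langle\beta(v),z_v-\beta(u)\rangle,
\]
the first and third terms are $\le 0$ and Cauchy--Schwarz bounds the other two, so $\|\beta(u)-\beta(v)\|^2\le 4\kappa\bigl(\|\beta(u)\|+\|\beta(v)\|\bigr)\|u-v\|$. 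Since $\beta(u),\beta(v)$ satisfy the norm constraint \eqref{eq: constraint_norm}, $\|\beta(u)\|\le d(u,X)$ and $\|\beta(v)\|\le d(v,X)\le d(u,X)+\|u-v\|\le d(u,X)+1$, which yields \eqref{eq: theo holder w=u} with $\widetilde C_u=2\sqrt{\kappa\,(2\,d(u,X)+1)}$ — a quantity depending on $u$ only through $d(u,X)$ (and on $\varepsilon$ through $\kappa$).

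The hard part, conceptually, is the \emph{uniformity} of $\kappa$ across the moving feasible sets: what makes the conclusion a genuine Hölder estimate rather than a mere pointwise one is that the Hoffman constant of \eqref{optimization problem infinite} can be chosen independently of $u$ and $v$, which in turn rests on the observation that only the right-hand side of \eqref{optimization problem infinite} depends on the parameter, and does so $O(\|u-v\|)$-Lipschitzly via $u\mapsto u_{NN}$ and $u\mapsto\|u-u_{NN}\|$. A second point to keep in mind is that the square-root exponent is intrinsic to this scheme — and is exactly what is needed downstream: the estimate controls $\|\beta(u)-\beta(v)\|^2$, not $\|\beta(u)-\beta(v)\|$, by a quantity linear in $\|u-v\|$, the loss being the price of the possibly changing active set of the minimal-norm projection, which is most severe near $X$, where $\widetilde F_u$ collapses toward the subspace $\{z:\langle z,\Pi w_i\rangle=0\ \forall i\}$.
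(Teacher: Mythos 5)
Your proposal is correct and takes a genuinely different route from the paper. Where the paper constructs a Slater point $z^*$ satisfying the strict margin $\tilde g_i(z^*,u)\le -\frac{\varepsilon}{60}\|u-u_{NN}\|$ (Lemma~\ref{lem: z*}), interpolates to $z_v=(1-\delta_v)\beta(u)+\delta_v z^*\in\widetilde F_v$ (Lemma~\ref{lem: z in F_v}), shows $\|z_v\|$ is close to $\|\beta(v)\|$ (Lemma~\ref{lem: z_v small}), and then bounds $\|z_v-\beta(v)\|$ via the ball-and-half-space geometry of Lemma~\ref{lem: circle}, you instead invoke Hoffman's error bound to get near-feasible surrogates $z_u,z_v$ and finish with the variational inequality for projections. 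The core observation that makes both arguments go through is the same: the constraint matrix $[\pm\Pi w_i]_{i=1}^\ell$ is fixed across $u\in\widetilde X$ and only the right-hand sides vary, $4$-Lipschitzly (your local estimate $|\tilde g_i(z,v)-\tilde g_i(z,u)|\le 4\|u-v\|$ is exactly the paper's Lemma~\ref{lem: Delta}).

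Trade-offs worth noting. Your argument is conceptually cleaner and yields a uniform neighborhood: any $r_0\le 1$ works, whereas the paper's $r_u=\min\{1,\varepsilon\|u-u_{NN}\|/480\}$ shrinks as $u\to X$, which forces the paper to handle the case $u\in\overline X$ separately (equation~\eqref{eq: trivial case}). On the other hand, the paper buys fully explicit constants, $\widetilde C_u=\frac{1}{\varepsilon}\max\{1200,\,480+128\,d(u,X)^{1/2}\}$, depending only on $\varepsilon$ and $d(u,X)$, while your $\widetilde C_u=2\sqrt{\kappa\,(2d(u,X)+1)}$ carries the Hoffman constant $\kappa$, which depends on $\Pi$ and the cover $\mathcal C$ (hence implicitly on $X$ and on $\varepsilon$ through $\ell$) and is not easy to bound explicitly. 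Strictly read, the lemma's claim that $\widetilde C_u$ ``only depends on $d(u,X)$ and $\varepsilon$'' is stronger than what the Hoffman route delivers, so if you want to match that precise wording you would need either to replace $\kappa$ by an explicit bound in terms of $\Pi$ and $\mathcal C$, or to absorb its dependence into the implicit data as the paper does for $\Pi$. Everything else -- the variational-inequality decomposition, the Cauchy--Schwarz step, the bound $\|\beta(\cdot)\|\le\|\cdot-\cdot_{NN}\|$ from \eqref{eq: constraint_norm}, and the role of \eqref{eq: u_NN Lipschitz} as the place where positive reach enters -- is sound, and the exponent $1/2$ comes out for the same structural reason in both proofs: the estimate is quadratic in $\|\beta(u)-\beta(v)\|$ but only linear in $\|u-v\|$.
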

	
	\begin{remark}\label{rem: constants for lemma}
		We are able to find explicit expressions for the neighborhood $\widetilde{U}$ and the constant $\widetilde{C}_u$ in Lemma \ref{lem: holder}. If $u \in \overline{X}$, then (\ref{eq: trivial case}) will show that we can take $\widetilde{U}=B(u,1)\cap \widetilde{X}$, and $\widetilde{C}_u=3$. If $u \notin \overline{X}$, the proof of Lemma \ref{lem: holder} will show that we can take
		\[ r_u =\min\left\{1,\frac{\varepsilon \|u-u_{NN}\|}{480}\right\}, \quad \quad  \widetilde{U}=B(u,r_u) \cap \widetilde{X}, \quad \quad \widetilde{C}_u= \frac{1}{\varepsilon}\max\left \{1200, 480+128\|u-u_{NN}\|^{1/2}\right \}.\]
	\end{remark}
	
	The proof of Lemma \ref{lem: holder} will be broken into several parts. Our first lemma shows that the functions $\tilde{g_i}$ appearing in the optimization problem (\ref{optimization problem infinite}) are Lipschitz in the second variable when restricted to $\widetilde{X}$. Moreover, their Lipschitz constant does not depend on the first variable $z \in \mathbb{R}^m$. Recall that the \textit{Lipschitz constant} of a function $f \colon \widetilde{X} \longrightarrow \mathbb{R}$ is
	\[ \|f\|_L= \sup \left \{ \frac{|f(x)-f(y)|}{\|x-y\|} \colon x \neq y \in \widetilde{X} \right \}.\]
	
	\begin{lemma}\label{lem: Delta}
		Let $z \in \mathbb{R}^m$ and define $h_i \colon \widetilde{X} \longrightarrow \mathbb{R}$ by $h_i(u)=\tilde{g}_i(z,u)$ for $i=1,\ldots,2\ell$, where $\tilde{g}_i$ are defined as in (\ref{eq: g_i tilde 1}) and (\ref{eq: g_i tilde 2}). Then, $h_i$ are Lipschitz and $\|h_i\|_L \leq 4$, that is,
		\[ |h_i(v)-h_i(w)|\leq 4\|v-w\| \quad \forall \, v,w \in \widetilde{X}.\]
	\end{lemma}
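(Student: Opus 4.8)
The plan is to exploit the fact that, for fixed $z\in\mathbb{R}^m$, the term $\langle z,\Pi w_i\rangle$ appearing in \eqref{eq: g_i tilde 1}--\eqref{eq: g_i tilde 2} does not depend on $u$, and hence contributes nothing to the Lipschitz constant of $h_i$. Thus it suffices to control the Lipschitz constants on $\widetilde{X}$ of the two maps
\[
u\longmapsto \langle u-u_{NN},\,w_i\rangle \qquad\text{and}\qquad u\longmapsto \|u-u_{NN}\|,
\]
since, up to an additive constant and an overall sign, $h_i$ equals the first of these minus $\tfrac{\varepsilon}{30}$ times the second (and $h_{\ell+i}$ is handled identically).

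The key step is to show that $\phi\colon\widetilde{X}\to\mathbb{R}^d$, $\phi(u)=u-u_{NN}$, is $3$-Lipschitz. For $u,v\in\widetilde{X}$ one writes $\phi(u)-\phi(v)=(u-v)-(u_{NN}-v_{NN})$, so the triangle inequality together with the Federer bound \eqref{eq: u_NN Lipschitz} gives
\[
\|\phi(u)-\phi(v)\|\leq \|u-v\|+\|u_{NN}-v_{NN}\|\leq \|u-v\|+2\|u-v\|=3\|u-v\|.
\]
I would emphasize that this is precisely where the restriction to $\widetilde{X}=X+B(0,\tau_X/2)$ enters: outside tubes of radius less than $\tau_X$, the map $u\mapsto u_{NN}$ need not even be single-valued, let alone Lipschitz, so \eqref{eq: u_NN Lipschitz} is exactly what makes the argument go through.

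To finish, I would assemble the pieces. Since each $w_i\in S_X\subseteq S^{d-1}$, we have $\|w_i\|=1$, so Cauchy--Schwarz gives $|\langle\phi(v)-\phi(w),w_i\rangle|\leq\|\phi(v)-\phi(w)\|\leq 3\|v-w\|$, and the reverse triangle inequality gives $\big|\,\|\phi(v)\|-\|\phi(w)\|\,\big|\leq\|\phi(v)-\phi(w)\|\leq 3\|v-w\|$. Combining these through the triangle inequality yields, for all $v,w\in\widetilde{X}$,
\[
|h_i(v)-h_i(w)|\leq 3\|v-w\|+\frac{\varepsilon}{30}\cdot 3\|v-w\|=3\Big(1+\frac{\varepsilon}{30}\Big)\|v-w\|<4\|v-w\|,
\]
where the last inequality uses $\varepsilon\in(0,1)$. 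This is a short argument and I do not anticipate a real obstacle; the only point requiring care is bounding the $u_{NN}$ contribution via \eqref{eq: u_NN Lipschitz}, which is what forces the hypothesis that we stay within half the reach of $X$.
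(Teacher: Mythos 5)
Your proposal is correct and follows essentially the same route as the paper: isolate the $\langle z,\Pi w_i\rangle$ term as constant in $u$, bound the inner-product and norm differences separately via Cauchy--Schwarz (using $\|w_i\|=1$) and the reverse triangle inequality, and reduce everything to the $3$-Lipschitz property of $u\mapsto u-u_{NN}$ obtained from Federer's bound \eqref{eq: u_NN Lipschitz}, arriving at the same constant $3(1+\varepsilon/30)\leq 4$.
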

	
	\begin{proof}
		Recall that $\tilde{g}_i(z,u)$  is defined by
		\[ \tilde{g}_i(z,u)=\langle z,\Pi w_i \rangle - \langle u-u_{NN}, w_i\rangle - \frac{\varepsilon}{30}\|u-u_{NN}\|\]
		for $i=1,\ldots,\ell$. Therefore, for any $z \in \mathbb{R}^m$ and $v$, $w \in \widetilde{X}$ we have
		\begin{align*}
			|\tilde{g}_i(z,v) - \tilde{g}_i(z,w)|&= \left | \langle v-v_{NN}, w_i\rangle + \frac{\varepsilon}{30}\|v-v_{NN}\| - \langle w-w_{NN}, w_i\rangle - \frac{\varepsilon}{30}\|w-w_{NN}\| \right |\\
			&\leq \big{|} \langle (v-v_{NN})-(w-w_{NN}), w_i\rangle \big{|} +  \frac{\varepsilon}{30} \big{|} \|v-v_{NN}\| - \|w-w_{NN}\| \big{|}\\
			&\leq \left (1+\frac{\varepsilon}{30}\right ) \|(v-v_{NN})-(w-w_{NN})\|.
		\end{align*}
		Notice that the mapping $v \mapsto v-v_{NN}$ is Lipschitz. In fact, (\ref{eq: u_NN Lipschitz}) gives
		\begin{equation}\label{eq: u-u_NN Lipschitz} 
			\|(v-v_{NN})-(w-w_{NN})\| \leq \|v-w\| + \|v_{NN}-w_{NN}\| \leq 3 \|v-w\| \quad \forall \, v,w \in \widetilde{X}.
		\end{equation}
		Therefore, we conclude that
		\[ |\tilde{g}_i(z,v) - \tilde{g}_i(z,w)| \leq 3\left (1+\frac{\varepsilon}{30}\right )\|v-w\| \leq 4\|v-w\|. \]
		The case $i=\ell+1,\ldots,2\ell$ is analogous.
	\end{proof}

	Next, we show that there is a point $z^*$ for which all the constraints $\tilde{g}_i(z^*,u)\leq 0$ of the optimization problem (\ref{optimization problem infinite}) are strongly satisfied for all $i=1,\ldots,2\ell$. Intuitively, we can think of $z^*$ as a point in the interior of the feasible set $\widetilde{F}_u$.
	
	\begin{lemma}\label{lem: z*}
		There is $z^* \in \mathbb{R}^m$ with $\|z^*\|\leq \|u-u_{NN}\|$ so that 
		\begin{equation}\label{eq: z*}
			\tilde{g}_i(z^*,u)\leq -\frac{\varepsilon}{60} \|u-u_{NN}\| \quad \forall \, i=1,\ldots,2\ell.
		\end{equation}
	\end{lemma}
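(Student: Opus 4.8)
The plan is to exhibit the ``deep interior'' point $z^*$ not by re-running the von Neumann argument from scratch, but by applying the \emph{finite-set-of-directions} version of the angle preservation lemma, Lemma~\ref{lem: narayan u'}, directly to the cover elements $w_1,\dots,w_\ell$. Note that Lemma~\ref{lem:u'} is \emph{not} quite enough here, since it only controls directions of the form $\tfrac{x-u_{NN}}{\|x-u_{NN}\|}$ with $x\in X$, and the $w_i$ need not be of this form; but Lemma~\ref{lem: narayan u'} applies to an arbitrary finite family of unit vectors. The only thing to check before invoking it is that the convex-hull-distortion hypothesis is met for $V=\{\pm w_1,\dots,\pm w_\ell\}$, and this is immediate from the symmetry of $S_X$: if $\tfrac{x-y}{\|x-y\|}\in S_X$ then $\tfrac{y-x}{\|y-x\|}=-\tfrac{x-y}{\|x-y\|}\in S_X$, so $S_X=-S_X$; since $\mathcal{C}\subseteq S_X$ this gives $\pm\mathcal{C}\subseteq S_X$, hence $\operatorname{conv}(\pm\mathcal{C})\subseteq\operatorname{conv}(S_X)$, and therefore $\Pi$ providing $\tfrac{\varepsilon}{240}$-convex hull distortion for $S_X$ also provides $\tfrac{\varepsilon}{240}$-convex hull distortion for $\pm\mathcal{C}$.

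With that in hand, I would apply Lemma~\ref{lem: narayan u'} with the vectors $w_1,\dots,w_\ell\in\mathbb{R}^d\setminus\{0\}$, with the point ``$u$'' of that lemma taken to be $u-u_{NN}$, and with distortion parameter $\tfrac{\varepsilon}{240}$. This produces $z^*\in\mathbb{R}^m$ with $\|z^*\|\le\|u-u_{NN}\|$ and, using $\|w_i\|=1$,
\[
\bigl|\langle z^*,\Pi w_i\rangle-\langle u-u_{NN},w_i\rangle\bigr|\le\tfrac{\varepsilon}{240}\|u-u_{NN}\|\qquad\forall\,i=1,\dots,\ell .
\]
Plugging this into the definition \eqref{eq: g_i tilde 1} of $\tilde g_i$ gives, for $i=1,\dots,\ell$,
\[
\tilde g_i(z^*,u)=\langle z^*,\Pi w_i\rangle-\langle u-u_{NN},w_i\rangle-\tfrac{\varepsilon}{30}\|u-u_{NN}\|\le\Bigl(\tfrac{\varepsilon}{240}-\tfrac{\varepsilon}{30}\Bigr)\|u-u_{NN}\|=-\tfrac{7\varepsilon}{240}\|u-u_{NN}\|\le-\tfrac{\varepsilon}{60}\|u-u_{NN}\| ,
\]
and the identical bound for $\tilde g_{\ell+i}(z^*,u)$ follows from \eqref{eq: g_i tilde 2} after noting $-\bigl(\langle z^*,\Pi w_i\rangle-\langle u-u_{NN},w_i\rangle\bigr)\le\tfrac{\varepsilon}{240}\|u-u_{NN}\|$. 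This establishes \eqref{eq: z*} with $\|z^*\|\le\|u-u_{NN}\|$. (The degenerate case $u=u_{NN}$ is trivial, with $z^*=0$.)

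There is essentially no hard step: the ``main obstacle'' is only the conceptual one of selecting Lemma~\ref{lem: narayan u'} rather than Lemma~\ref{lem:u'} and recording that convex-hull distortion passes to subsets via the symmetry of $S_X$; after that the estimate is pure bookkeeping, and the slack $\tfrac{\varepsilon}{240}-\tfrac{\varepsilon}{30}=-\tfrac{7\varepsilon}{240}$ comfortably beats the required $-\tfrac{\varepsilon}{60}$. For completeness I would also note the self-contained alternative: by the von Neumann minimax theorem,
\[
\min_{\|z\|\le\|u-u_{NN}\|}\ \max_{1\le i\le\ell}\bigl|\langle z,\Pi w_i\rangle-\langle u-u_{NN},w_i\rangle\bigr|
=\max_{v\in\operatorname{conv}(\pm\mathcal{C})}\Bigl(-\|u-u_{NN}\|\,\|\Pi v\|-\langle u-u_{NN},v\rangle\Bigr),
\]
and the right-hand side is at most $\tfrac{\varepsilon}{240}\|u-u_{NN}\|$ because $\|\Pi v\|\ge\|v\|-\tfrac{\varepsilon}{240}$ on $\operatorname{conv}(S_X)\supseteq\operatorname{conv}(\pm\mathcal{C})$ and $-\langle u-u_{NN},v\rangle\le\|u-u_{NN}\|\,\|v\|$, which reproduces the same $z^*$.
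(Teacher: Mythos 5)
Your proof is correct and follows essentially the same route as the paper's: apply Lemma~\ref{lem: narayan u'} to the cover elements $w_1,\dots,w_\ell$ with target vector $u-u_{NN}$, then substitute the resulting bound into the definition of $\tilde g_i$. The only (harmless) differences are that you use the sharper distortion parameter $\tfrac{\varepsilon}{240}$ where the paper invokes the implied weaker $\tfrac{\varepsilon}{60}$-distortion, and that you spell out the symmetry $S_X=-S_X$ needed to legitimize the convex-hull-distortion hypothesis for $\{\pm w_i\}$, a check the paper leaves implicit.
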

	
	\begin{proof}
		Since the embedding $\Pi$ provides $\frac{\varepsilon}{60}$-convex hull distortion for $S_X$, we can apply Lemma \ref{lem: narayan u'} to the points $\{w_1,\ldots,w_\ell\} \subseteq S_X$ and $u-u_{NN} \in \mathbb{R}^d$ to obtain that there is a point $z^* \in \mathbb{R}^m$ such that $\|z^*\|\leq \|u-u_{NN}\|$ and
		\[ |\langle z^*,\Pi w_i \rangle - \langle u-u_{NN}, w_i\rangle | \leq \frac{\varepsilon}{60} \|u-u_{NN}\|.\]
		The result follows from substituting the above inequality in the equation of $\tilde{g}_i(z,u)$.
	\end{proof}
	
	Fix $u \in \mathbb{R}^d$ and recall that $\beta \colon \widetilde{X} \longrightarrow \mathbb{R}^m$ assigns $u \in \mathbb{R}^d$ to the solution of the optimization problem (\ref{optimization problem infinite}). Without loss of generality we may assume that $\|u-u_{NN}\|>0$. Indeed, since $\beta(u)$ verifies (\ref{eq: constraint_norm}), where $u'=\beta(u)$, then $u=u_{NN}$ implies $\beta(u)=0$. Hence, for any $v \in  \widetilde{X}$ with $\|u-v\|\leq 1$ we have
	\begin{equation}\label{eq: trivial case} \|\beta(u)-\beta(v)\|=\|\beta(v)\|\leq \|v-v_{NN}\| =\|(v-v_{NN}) - (u-u_{NN})\|\leq 3\|u-v\| \leq 3\|u-v\|^{1/2},
	\end{equation}
	where we have used $\|u-u_{NN}\|=0$ and (\ref{eq: u-u_NN Lipschitz}). 
	
	Now, define
	\begin{equation}\label{eq: r} r_u=\min\left\{1,\frac{\varepsilon \|u-u_{NN}\|}{480}\right\} \quad \mbox{and} \quad  \widetilde{U}=B(u,r_u) \cap \widetilde{X}.
	\end{equation}
	Our goal is to show that for any $v \in \widetilde{U}$ we have $\|\beta(u)-\beta(v)\|\leq \widetilde{C}_u \|u-v\|^{1/2}$ for some constant $\widetilde{C}_u>0$. Fix $v \in \widetilde{U}$ and define
	\begin{equation}\label{eq: delta}
		\delta_v = \frac{240\|u-v\|}{\varepsilon\|u-u_{NN}\|} \leq \frac{1}{2} \quad \mbox{and} \quad z_v = (1-\delta_v)\beta(u)+\delta_v z^*,
	\end{equation}
	where $z^*$ is the point provided by Lemma \ref{lem: z*}, whence it satisfies (\ref{eq: z*}). Intuitively, $z_v$ is a point close to $\beta(u)$ that belongs to the interior of the feasible set $F_u$. The strategy to prove Lemma \ref{lem: holder} will be to take $v$ close enough to $u$ so that $z_v$ also belongs to the feasible set $F_v$. Then, we can use the Lipschitz condition of the constraints to guarantee that $z_v$ is also close to $\beta(v)$. Finally, we will use the triangle inequality to estimate $\|\beta(u)-\beta(v)\|$ as
	\begin{equation}\label{eq: triangle inequality } 
		\|\beta(u)-\beta(v)\|\leq \|\beta(u)-z_v\| + \|z_v-\beta(v)\|.
	\end{equation}
	The estimation for $\|\beta(u)-z_v\|$ easily follows from Lemma~\ref{lem: z*} since for any $v \in \widetilde{U}$ we have
	\begin{equation}\label{eq: |u'-zv|} 
		\|\beta(u)-z_v\| = \delta_v \|\beta(u) - z^*\| \leq \delta_v (\|\beta(u)\|+\|z^*\|)\leq 2\delta_v\|u-u_{NN}\|=\frac{480}{\varepsilon}\|u-v\|.
	\end{equation}
	We will estimate $\|z_v-\beta(v)\|$ through as series of lemmata. First, we show that $z_v$ belongs to the feasible set $F_v$.
	
	\begin{lemma}\label{lem: z in F_v}
		Let $v \in \widetilde{U}$. Then,  $\tilde{g}_i(z_v,v)\leq 0$ for $i=1,\ldots,2\ell$.
	\end{lemma}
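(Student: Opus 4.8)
The plan is to use the two structural properties already in hand: each $\tilde{g}_i(\cdot,u)$ is affine in its first argument, and the map $u \mapsto \tilde{g}_i(z,u)$ is $4$-Lipschitz on $\widetilde{X}$ uniformly in $z$ by Lemma~\ref{lem: Delta}. I would first estimate the constraint functions at the point $z_v$ but still at the parameter $u$. Recalling from \eqref{eq: delta} that $z_v = (1-\delta_v)\beta(u) + \delta_v z^*$ with $\delta_v \in [0,\tfrac12]$, affineness of $\tilde{g}_i(\cdot,u)$ gives the exact identity
\[
\tilde{g}_i(z_v,u) = (1-\delta_v)\,\tilde{g}_i(\beta(u),u) + \delta_v\,\tilde{g}_i(z^*,u).
\]
Since $\beta(u)$ is feasible for \eqref{optimization problem infinite} we have $\tilde{g}_i(\beta(u),u)\le 0$, and $1-\delta_v\ge 0$; combined with the strong feasibility of $z^*$ from Lemma~\ref{lem: z*}, i.e.\ $\tilde{g}_i(z^*,u)\le -\tfrac{\varepsilon}{60}\|u-u_{NN}\|$, this yields
\[
\tilde{g}_i(z_v,u) \le -\frac{\delta_v\,\varepsilon}{60}\,\|u-u_{NN}\|.
\]

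Next I would transfer this bound from the parameter $u$ to the parameter $v$. Since $v\in\widetilde{U}\subseteq\widetilde{X}$ and $u\in\widetilde{X}$, Lemma~\ref{lem: Delta} applied with the fixed vector $z=z_v$ gives $|\tilde{g}_i(z_v,v)-\tilde{g}_i(z_v,u)|\le 4\|u-v\|$, so
\[
\tilde{g}_i(z_v,v) \le -\frac{\delta_v\,\varepsilon}{60}\,\|u-u_{NN}\| + 4\|u-v\|.
\]
Finally I would substitute the definition $\delta_v = \dfrac{240\|u-v\|}{\varepsilon\|u-u_{NN}\|}$ from \eqref{eq: delta}; this is legitimate because we have already reduced to the case $\|u-u_{NN}\|>0$, and $\delta_v\le\tfrac12$ holds since $\|u-v\|\le r_u\le \tfrac{\varepsilon\|u-u_{NN}\|}{480}$ by \eqref{eq: r}. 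With this choice the first term on the right-hand side equals exactly $-4\|u-v\|$, so $\tilde{g}_i(z_v,v)\le 0$ for $i=1,\dots,\ell$. The indices $i=\ell+1,\dots,2\ell$ are treated identically, since $\tilde{g}_{\ell+i}$ has the same affine-in-$z$ and Lipschitz-in-$u$ structure.

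There is no genuine obstacle here: the argument is a two-line convexity estimate followed by a Lipschitz perturbation. The only point requiring care is the bookkeeping of constants — one must check that the factor $\tfrac{240}{\varepsilon\|u-u_{NN}\|}$ built into $\delta_v$ makes the $\tfrac{\varepsilon}{60}\|u-u_{NN}\|$ slack from Lemma~\ref{lem: z*} cancel precisely the $4\|u-v\|$ loss coming from the Lipschitz constant of Lemma~\ref{lem: Delta}, which is exactly why $\delta_v$, $r_u$, and the constants in the $\tilde{g}_i$ were calibrated as they were.
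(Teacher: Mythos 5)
Your proof is correct and uses exactly the same ingredients as the paper's: the affine decomposition of $\tilde{g}_i$ in its first argument, the $4$-Lipschitz bound in $u$ from Lemma~\ref{lem: Delta}, the strong feasibility of $z^*$ from Lemma~\ref{lem: z*}, and the exact cancellation $\delta_v\cdot\frac{\varepsilon}{60}\|u-u_{NN}\|=4\|u-v\|$ built into the definition of $\delta_v$. The only cosmetic difference is the order: you split $\tilde{g}_i(z_v,u)$ into the convex combination and then apply one Lipschitz perturbation to move from $u$ to $v$, whereas the paper splits $\tilde{g}_i(z_v,v)$ and transfers each of the two terms separately back to $u$; the resulting arithmetic is identical.
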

	
	\begin{proof}
		Recall that the functions $\tilde{g}_i(\cdot,v)$ are affine for $i=1,\ldots,2\ell$. Therefore, 
		\[ \tilde{g}_i(z_v,v)= (1-\delta_v) \tilde{g}_i(\beta(u),v) + \delta_v \tilde{g}_i(z^*,v).\]
		On the one hand, Lemma \ref{lem: Delta} gives $\tilde{g}_i(\beta(u),v)\leq \tilde{g}_i(\beta(u),u) + \|\tilde{g}_i(\beta(u),\cdot)\|_L \|u-v\| \leq 4 \|u-v\|$. On the other hand,
		\[ \tilde{g}_i(z^*,v) \leq \tilde{g}_i(z^*,u) + \|\tilde{g}_i(z^*,\cdot)\|_L \|u-v\| \leq -\frac{\varepsilon}{60}\|u-u_{NN}\| + 4\|u-v\|,\]
		where the second inequality follows from Lemma \ref{lem: Delta} and Lemma \ref{lem: z*}. Consequently,
		\[ \tilde{g}_i(z_v,v) \leq (1-\delta_v) 4\|u-v\| + \delta_v\left (4\|u-v\|-\frac{\varepsilon}{60}\|u-u_{NN}\|\right )= 4\|u-v\| - \frac{\varepsilon}{60}\delta_v\|u-u_{NN}\|=0.\qedhere\]
	\end{proof}
	
	Note that Lemma~\ref{lem: z in F_v} implies $\|\beta(v)\| \leq \|z_v\|$ thanks to the feasibility of $z_u$ and the optimality of $\beta(v)$. Next, we show that when $v$ and $u$ are close enough, $\|z_v\|$ is not much greater than $\|\beta(v)\|$ . This allows us to control $\big{|}\|\beta(v)\|-\|z_v\|\big{|}$, from which we will deduce a bound for $\|\beta(v)-z_v\|$. 
	
	\begin{lemma}\label{lem: z_v small} Assume that $\beta(u)\neq 0$ and $\|u-v\| \leq \frac{\varepsilon \|\beta(u)\|}{480}$. Then
		$$\|z_v\| \leq \left (1+4\delta_v \frac{\|u-u_{NN}\|}{\|\beta(u)\|}\right )\|\beta(v)\|.$$
	\end{lemma}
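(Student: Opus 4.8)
The plan is to mirror the argument of Lemma~\ref{lem: z in F_v}, but with the roles of $u$ and $v$ interchanged. There we showed that the convex combination $z_v = (1-\delta_v)\beta(u) + \delta_v z^*$ lies in the feasible set $\widetilde{F}_v$; here the first step is to show that the ``dual'' combination $z := (1-\delta_v)\beta(v) + \delta_v z^*$ lies in $\widetilde{F}_u$. Since each $\tilde{g}_i(\cdot,u)$ is affine, $\tilde{g}_i(z,u) = (1-\delta_v)\tilde{g}_i(\beta(v),u) + \delta_v\tilde{g}_i(z^*,u)$. By Lemma~\ref{lem: Delta} and the feasibility of $\beta(v)$ for $v$, one has $\tilde{g}_i(\beta(v),u) \le \tilde{g}_i(\beta(v),v) + 4\|u-v\| \le 4\|u-v\|$, while Lemma~\ref{lem: z*} gives $\tilde{g}_i(z^*,u) \le -\tfrac{\varepsilon}{60}\|u-u_{NN}\|$. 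The definition of $\delta_v$ in \eqref{eq: delta} is exactly calibrated so that $\delta_v\cdot\tfrac{\varepsilon}{60}\|u-u_{NN}\| = 4\|u-v\|$, whence $\tilde{g}_i(z,u) \le (1-\delta_v)4\|u-v\| - 4\|u-v\| \le 0$ for every $i$; that is, $z \in \widetilde{F}_u$.

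Second, I would extract a lower bound for $\|\beta(v)\|$. Feasibility of $z$ for $u$ together with the optimality of $\beta(u)$ and $\|z^*\| \le \|u-u_{NN}\|$ yields $\|\beta(u)\| \le \|z\| \le (1-\delta_v)\|\beta(v)\| + \delta_v\|u-u_{NN}\|$. Since $\|\beta(u)\| \le \|u-u_{NN}\|$ (as $\beta(u)$ is the minimal-norm point of $\widetilde{F}_u$ and satisfies \eqref{eq: constraint_norm}), the hypothesis $\|u-v\| \le \tfrac{\varepsilon\|\beta(u)\|}{480}$ gives $\delta_v\|u-u_{NN}\| = \tfrac{240\|u-v\|}{\varepsilon} \le \tfrac{1}{2}\|\beta(u)\|$, so $(1-\delta_v)\|\beta(v)\| \ge \tfrac{1}{2}\|\beta(u)\|$; in particular $\|\beta(u)\| \le 2\|\beta(v)\|$.

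Third, I would bound $\|z_v\|$ from above. The triangle inequality and $\|z^*\| \le \|u-u_{NN}\|$ give $\|z_v\| \le (1-\delta_v)\|\beta(u)\| + \delta_v\|u-u_{NN}\|$; substituting the bound $\|\beta(u)\| \le (1-\delta_v)\|\beta(v)\| + \delta_v\|u-u_{NN}\|$ from the previous step and using $(1-\delta_v)^2 \le 1$ and $2-\delta_v \le 2$ gives $\|z_v\| \le \|\beta(v)\| + 2\delta_v\|u-u_{NN}\|$. Finally, using $\|\beta(u)\| \le 2\|\beta(v)\|$ to write $1 \le 2\|\beta(v)\|/\|\beta(u)\|$, the error term becomes $2\delta_v\|u-u_{NN}\| \le 4\delta_v\tfrac{\|u-u_{NN}\|}{\|\beta(u)\|}\|\beta(v)\|$, which is exactly the claimed inequality.

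The routine parts are the affine interpolation of the constraints and the triangle inequalities; the one genuinely new idea, and hence the main obstacle, is the realization that even though $\|\beta(u)\|$ cannot be controlled by $\|\beta(v)\|$ outright (that is essentially the H\"older estimate we are chasing), feeding the convex combination $z$ back into the feasibility of the problem at $u$ controls $\|\beta(u)\|$ by $\|\beta(v)\|$ up to the additive error $\delta_v\|u-u_{NN}\|$ — and the quantitative hypothesis $\|u-v\| \le \varepsilon\|\beta(u)\|/480$ is precisely what is needed to reabsorb that error as a small multiplicative perturbation of $\|\beta(v)\|$.
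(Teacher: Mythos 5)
Your proof is correct and follows essentially the same blueprint as the paper's: construct an auxiliary point near $\beta(v)$ that is feasible for the problem at $u$ using the Slater point $z^*$, then use the optimality of $\beta(u)$ to control $\|\beta(u)\|$ by $\|\beta(v)\|$ plus a small error of order $\delta_v\|u-u_{NN}\|$, and finally convert this into the claimed multiplicative bound. The notable technical difference is your choice of auxiliary point: you use the genuine convex combination $z = (1-\delta_v)\beta(v) + \delta_v z^*$, whereas the paper takes the translate $z^*_v = \beta(v) + \delta_v z^*$. Your choice is the cleaner one, since the constraint functions $\tilde g_i(\cdot,u)$ are affine rather than linear, so the identity $\tilde g_i(z,u) = (1-\delta_v)\tilde g_i(\beta(v),u) + \delta_v \tilde g_i(z^*,u)$ holds exactly for a convex combination, while the decomposition $\tilde g_i(z^*_v,u) = \tilde g_i(\beta(v),u) + \delta_v\tilde g_i(z^*,u)$ used in the paper does not account for the affine constant term; your version avoids that sloppiness. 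The finishing algebra also differs slightly: with $a = \delta_v\|u-u_{NN}\|/\|\beta(u)\|$, the paper works multiplicatively via the claim $\|\beta(u)\| \le (1-a)^{-1}\|\beta(v)\|$ together with $\tfrac{1+a}{1-a} \le 1+4a$ for $a\le\tfrac12$, whereas you work additively, extract $\|\beta(u)\| \le 2\|\beta(v)\|$ first, bound $\|z_v\|\le\|\beta(v)\|+2\delta_v\|u-u_{NN}\|$, and then reabsorb the additive error as a multiplicative one. Both routes give the same constant $1+4a$, so the two arguments are equivalent in content; yours is a touch more careful in the feasibility computation.
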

	
	\begin{proof}
		For convenience, write $a=\delta_v \frac{\|u-u_{NN}\|}{\|\beta(u)\|}$ and observe that $a\leq \frac{1}{2}$. Next, notice that
		\[ \|z_v\| \leq \|\beta(u)\|+ \delta_v \|z^*\| \leq \|\beta(u)\| + \delta_v \|u-u_{NN}\| = (1+a)\|\beta(u)\|.\]
		We claim that $\|\beta(u)\| \leq (1-a)^{-1} \|\beta(v)\|$, from which the result follows since then
		\[ \|z_v\| \leq (1+a)\|\beta(u)\| \leq \frac{1+a}{1-a} \|\beta(v)\| \leq (1+4a)\|\beta(v)\|.\]
		To prove the claim, consider $z^*_v = \beta(v) + \delta_v z^*$ and note that for any $i=1,\ldots,2\ell$ we have
		\[ g_i(z^*_v,u) = g_i(\beta(v),u) + \delta_v g_i ( z^*,u) \leq g_i(\beta(v),v)+ 4\|u-v\| - \delta_v \frac{\varepsilon}{60} \|u-u_{NN}\| = 0,\]
		where we have used that $g_i(\beta(v),v)\leq 0$, Lemma \ref{lem: Delta} to bound $\|g_i(\beta(v),\dot)\|_L\leq 4$, and Lemma~\ref{lem: z*} to estimate $g_i(z^*,u)$. Therefore, $z^*_v$ belongs to the feasible set $F_u$. Since $\beta(u)$ is the point in $F_u$ with minimal norm, we obtain that 
		\[ \|\beta(u)\|\leq \|z^*_v\| \leq \|\beta(v)\| + \delta_v \|z^*\| \leq \|\beta(v)\| + \delta_v \|u-u_{NN}\|.\]
		Consequently, 
		\[ \|\beta(v)\| \geq \|\beta(u)\| - \delta_v \|u-u_{NN}\| = (1-a)\|\beta(u)\|.\qedhere\]
	\end{proof}
	
	The following result will allow us to control the distance between $z_v$ and $\beta(v)$, using the bound for $\big{|}\|\beta(v)\|-\|z_v\|\big{|}$ provided by Lemma \ref{lem: z_v small}.
	
	\begin{lemma}\label{lem: circle}
		Given $\rho>0$, let $B \subseteq \mathbb{R}^m$ denote the ball centered at $0$ with radius $\rho$. Take $v \in B$ and consider the hyperplane $K_v=\{z \in \mathbb{R}^m \colon \langle v-z,v \rangle \leq 0\}$. Then
		\begin{equation}\label{eq: circle} 
			\max_{z \in K_v\cap B} \|v-z\|= \sqrt{\rho^2 - \|v\|^2}.
		\end{equation}
	\end{lemma}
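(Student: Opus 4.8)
The plan is to remove the ball-and-half-space geometry by translating the origin to $v$. Setting $w = z - v$, the membership $z \in K_v$ becomes $\langle w, v\rangle \geq 0$, since $\langle v-z,v\rangle = -\langle w,v\rangle$; and $z \in B$ becomes $\|v+w\|^2 \le \rho^2$. Because $\|v-z\| = \|w\|$, the lemma is exactly the statement that
$$\max\bigl\{\,\|w\| \;:\; \langle w,v\rangle \ge 0,\ \|v+w\| \le \rho \,\bigr\} = \sqrt{\rho^2 - \|v\|^2}.$$
(Here I interpret $B$ as the \emph{closed} ball, as the use of $\max$ indicates; note $v\in B$ forces $\|v\|\le\rho$, so the right-hand side is well defined.)

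For the upper bound I would simply expand the ball constraint: $\rho^2 \ge \|v+w\|^2 = \|v\|^2 + 2\langle v,w\rangle + \|w\|^2$, and then drop the cross term, which is $\ge 0$ by the half-space constraint, to obtain $\|w\|^2 \le \rho^2 - \|v\|^2$. This already gives ``$\le$'' in \eqref{eq: circle}, and this is the only direction that will be invoked in the proof of Lemma~\ref{lem: holder} (via feasibility of $z_v$ together with the projection characterization of $\beta(v)$). For attainment, I would dispatch the case $v = 0$ first (then $K_0 = \mathbb{R}^m$ and any $w$ with $\|w\| = \rho$ works), and for $v \ne 0$ choose a unit vector $e$ orthogonal to $v$ — which exists since $m \ge 2$ — and set $z = v + \sqrt{\rho^2-\|v\|^2}\,e$. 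Then $\|z\|^2 = \|v\|^2 + (\rho^2 - \|v\|^2) = \rho^2$ so $z \in B$; $\langle v-z,v\rangle = -\sqrt{\rho^2-\|v\|^2}\,\langle e,v\rangle = 0 \le 0$ so $z \in K_v$; and $\|v-z\| = \sqrt{\rho^2-\|v\|^2}$. When $\|v\| = \rho$ this degenerates to $z = v$, which is consistent since in that case Cauchy--Schwarz forces $K_v \cap B = \{v\}$.

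I do not expect any real obstacle: the computation is elementary once the translation $w = z-v$ is made. The only point deserving a word of care is that the \emph{attainment} half uses the existence of a nonzero vector orthogonal to $v$, i.e.\ $m \ge 2$ (always true here, $m$ being a large embedding dimension), and that one should read $B$ as the closed ball so that the extremal point on the sphere is admissible; if one wanted to be maximally cautious, it suffices to record only the inequality $\max_{z \in K_v \cap B}\|v-z\| \le \sqrt{\rho^2-\|v\|^2}$, which is all that the subsequent argument needs.
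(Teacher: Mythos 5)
Your proof is correct, and it takes a genuinely different (and considerably shorter) route than the paper's. The paper argues variationally: it first shows by a perturbation argument (moving a maximizer $w$ in the direction $+\delta v$ for small $\delta>0$) that any maximizer must lie on the sphere $\|w\|=\rho$, then rotates to reduce to $\mathbb{R}^2$, parametrizes the circle by $\theta$, and optimizes $\|v-w\|^2 = \rho^2 - 2\rho\|v\|\cos\theta + \|v\|^2$ subject to $\cos\theta\ge\|v\|/\rho$. Your version replaces all of this with the translation $w=z-v$, the single expansion $\rho^2 \ge \|v+w\|^2 = \|v\|^2 + 2\langle v,w\rangle + \|w\|^2$, and the observation that the cross term is nonnegative on $K_v$, which yields the upper bound immediately; the explicit witness $z = v + \sqrt{\rho^2-\|v\|^2}\,e$ with $e\perp v$ then gives equality. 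What your argument buys is brevity and transparency (no optimality reasoning, no rotation-to-2D step, no trigonometry), and it also isolates cleanly that only the upper bound is used downstream in Lemma~\ref{lem: zv-v'}. Both proofs share the implicit hypothesis $m\ge 2$ (the paper uses it when it drops into $\mathbb{R}^2$ coordinates; you use it to find $e\perp v$), which you correctly flag as harmless here. One stylistic note: the paper writes the hyperplane condition as $\langle v-w,v\rangle \le 0$ rather than the strict inequality used in the perturbation step, and it handles $v=0$ by declaring it trivial; your treatment of both degenerate cases ($v=0$ and $\|v\|=\rho$) is explicit, which is a small improvement in rigor.
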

	
	\begin{proof}
		If $v=0$ the statement is trivial. Hence, we may and do assume that $v\neq 0$. We use the notation $v=(v_1,\ldots,v_m)$ for the coordinates of $v$ in the standard basis. Up to rotations, we may assume that $v=(\|v\|,0,\ldots,0)$.\footnote{Or, equivalently, we may work in an orthonormal basis containing $v / \|v\|$ as the first basis element.} Let $w$ be a point in $K_v \cap B$ where the maximum in (\ref{eq: circle}) is attained. First, we claim that $\|w\|=\rho$. In fact, if $\|w\|<\rho$ then there is $\delta>0$ so that $w+\delta v \in K_v$. Indeed,
		\[ \langle v-(w+\delta v),v\rangle = \langle v-w,v\rangle - \delta \|v\|^2 \leq -\delta \|v\|^2 \leq 0.\]
		Moreover, if $\delta \leq 1-\frac{\|w\|}{\rho}$ then $\|w+\delta v\|\leq \|w\|+\delta \|v\| \leq\rho$. Now, observe that
		\begin{align*}
			\|v-(w+\delta v)\|^2 &= \langle v-(w+\delta v) , v-(w+\delta v) \rangle = \langle (1-\delta)v - w , (1-\delta)v - w \rangle \\
			&= (1-\delta)^2 \langle v , v \rangle - 2(1-\delta) \langle v , w \rangle  + \langle w , w \rangle  \\
			&= \langle v , v \rangle  - 2 \langle v , w \rangle  + \langle w , w \rangle  +(-2\delta+\delta^2)\langle v , v \rangle  + 2\delta \langle v , w \rangle \\
			&= \|v-w\|^2 - 2\delta \langle v-w , v \rangle  + \delta^2 \langle v , v \rangle.
		\end{align*}
		Since $\langle v-w , v \rangle \leq 0$, we conclude that $\|v- (w+\delta v)\|^2 > \|v-w\|^2$, contradicting the assumption that the maximum in (\ref{eq: circle}) is attained at $w$.  Hence, $\|w\|=\rho$ must hold as claimed.
		
		Next, notice that if $R$ is any rotation for which $R(v)=v$, then we have $\|v-R(w)\|=\|v-w\|$ and $\langle v-R(w),v \rangle = \langle v-w,v \rangle$. Therefore, we can effectively work in $\mathbb{R}^2$ by rotating $w - \langle w,v \rangle v$ into the second standard basis direction while fixing $v$. As a result, we may assume that $v=(\|v\|,0)$ and $w=(\rho\cos \theta, \rho\sin \theta)$ for some $\theta \in (-\pi,\pi]$ (since the rest of their coordinates may be assumed to be zero).  Since $\langle v-w, v\rangle \leq 0$ we must have $\rho\cos \theta \geq \|v\|$. In particular, $|\theta|<\pi/2$. Observe that
		\begin{align*}
			\|v-w\|^2&= \|(\rho\cos \theta -\|v\|, \rho\sin \theta)\|^2=\rho^2\cos^2 \theta -2\rho\|v\|\cos \theta + \|v\|^2 + \rho^2\sin^2\theta \\
			&= \rho^2-2\rho\|v\|\cos \theta +\|v\|^2.
		\end{align*}
		Therefore, the distance increases as $|\theta|$ increases, and so the maximum is attained at $w=(\|v\|, \sqrt{\rho^2 -\|v\|^2})$.
	\end{proof}
	
	We are finally ready to estimate the distance between $z_v$ and $\beta(v)$.
	
	\begin{lemma}\label{lem: zv-v'}
		Assume that $\beta(u)\neq 0$ and $\|u-v\| \leq \frac{\varepsilon \|\beta(u)\|}{480}$. Then
		$$\|z_v-\beta(v)\| \leq  \frac{128 \|u-u_{NN}\|^{1/2}}{\varepsilon^{1/2}} \|u-v\|^{1/2}.$$
	\end{lemma}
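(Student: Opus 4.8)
The plan is to combine the feasibility of $z_v$ with the variational characterization of $\beta(v)$ as a metric projection and then invoke the geometric estimate of Lemma~\ref{lem: circle}. First I would recall that $\beta(v)$ is the Euclidean projection of the origin onto the closed convex set $\widetilde{F}_v$, so the variational inequality for projections gives $\langle \beta(v),\, z - \beta(v)\rangle \geq 0$ for every $z \in \widetilde{F}_v$. By Lemma~\ref{lem: z in F_v} we have $z_v \in \widetilde{F}_v$, and taking $z = z_v$ yields $\langle \beta(v) - z_v,\, \beta(v)\rangle \leq 0$; in the notation of Lemma~\ref{lem: circle} this is exactly the statement that $z_v$ lies in the halfspace $K_{\beta(v)}$. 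Moreover, feasibility of $z_v$ together with the optimality (minimal norm) of $\beta(v)$ gives $\|\beta(v)\| \leq \|z_v\|$, so both $z_v$ and $\beta(v)$ lie in the ball $B$ centered at $0$ of radius $\rho := \|z_v\|$.

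Next I would apply Lemma~\ref{lem: circle} with this ball and with $\beta(v)$ playing the role of the point ``$v$'' there, obtaining
\[
\|z_v - \beta(v)\| \;\leq\; \max_{z \in K_{\beta(v)} \cap B}\|\beta(v) - z\| \;=\; \sqrt{\|z_v\|^2 - \|\beta(v)\|^2}.
\]
Factoring $\|z_v\|^2 - \|\beta(v)\|^2 = \big(\|z_v\| + \|\beta(v)\|\big)\big(\|z_v\| - \|\beta(v)\|\big)$, I would invoke Lemma~\ref{lem: z_v small} (whose hypotheses are precisely the standing assumptions $\beta(u) \neq 0$ and $\|u-v\| \leq \varepsilon\|\beta(u)\|/480$ of the present lemma) in the form $\|z_v\| \leq (1 + 4a)\|\beta(v)\|$ with $a := \delta_v \|u - u_{NN}\| / \|\beta(u)\| \leq \tfrac12$. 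This gives $\|z_v\| - \|\beta(v)\| \leq 4a\|\beta(v)\|$ and $\|z_v\| + \|\beta(v)\| \leq (2 + 4a)\|\beta(v)\| \leq 4\|\beta(v)\|$, hence $\|z_v\|^2 - \|\beta(v)\|^2 \leq 16 a \|\beta(v)\|^2$ and therefore $\|z_v - \beta(v)\| \leq 4\sqrt{a}\,\|\beta(v)\|$.

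It then remains to unwind the constants. From $\|z_v\| \leq (1 + a)\|\beta(u)\| \leq \tfrac32 \|\beta(u)\|$ (as in the proof of Lemma~\ref{lem: z_v small}) and $\|\beta(v)\| \leq \|z_v\|$ one gets $\|\beta(v)\| \leq \tfrac32\|\beta(u)\|$, while substituting $\delta_v = 240\|u - v\| / (\varepsilon\|u - u_{NN}\|)$ gives $a = 240\|u - v\| / (\varepsilon\|\beta(u)\|)$. Plugging these in,
\[
\|z_v - \beta(v)\| \;\leq\; 4\sqrt{a}\cdot \tfrac32 \|\beta(u)\| \;=\; 6\sqrt{\tfrac{240}{\varepsilon}}\,\|u - v\|^{1/2}\,\|\beta(u)\|^{1/2} \;\leq\; \frac{128\,\|u - u_{NN}\|^{1/2}}{\varepsilon^{1/2}}\,\|u - v\|^{1/2},
\]
using $6\sqrt{240} \leq 128$ and $\|\beta(u)\| \leq \|u - u_{NN}\|$ from \eqref{eq: constraint_norm}. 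I do not expect any single step to be a serious obstacle; the one place demanding care is the bookkeeping in matching $z_v$, $\beta(v)$ and $\rho = \|z_v\|$ to the hypotheses of Lemma~\ref{lem: circle} — in particular verifying $\|\beta(v)\| \leq \|z_v\|$ so that $\beta(v) \in B$, and correctly reading off the halfspace membership $z_v \in K_{\beta(v)}$ from the projection inequality — together with the purely arithmetic check that the accumulated constant stays below $128$.
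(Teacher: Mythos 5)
Your proof is correct and follows essentially the same route as the paper: feasibility of $z_v$ (Lemma~\ref{lem: z in F_v}) gives the halfspace membership $z_v\in K_{\beta(v)}$, Lemma~\ref{lem: z_v small} controls $\|z_v\|/\|\beta(v)\|$, Lemma~\ref{lem: circle} converts this into a distance bound, and the constants are unwound via $\|\beta(u)\|\le\|u-u_{NN}\|$. The only (cosmetic) difference is that you take $\rho=\|z_v\|$ and factor $\|z_v\|^2-\|\beta(v)\|^2$, whereas the paper takes $\rho=(1+4a)\|\beta(v)\|$ and expands $(1+4a)^2-1$; both land on the same intermediate bound $4\sqrt{a}\,\|\beta(v)\|$. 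You also use the tighter $1+a\le 3/2$ rather than the paper's $1+a\le 2$, which gives you $6\sqrt{240}\approx 93$ in place of the paper's $8\sqrt{240}\approx 124$ — both comfortably below the stated $128$.
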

	
	\begin{proof}
		Recall that $a=\delta_v \frac{\|u-u_{NN}\|}{\|\beta(u)\|}$, let $\rho=(1+4a)\|\beta(v)\|$, and let $B=B(0,\rho)\subseteq \mathbb{R}^m$. Let $F_v$ be the feasible set for $v$, that is,
		\[F_v=\{z\in \mathbb{R}^m \colon g_i(z,v)\leq 0 \quad \forall \, i =1,\ldots,2\ell\}.\]
		Define the hyperplane $K_{\beta(v)}= \{z \in \mathbb{R}^m \colon \langle \beta(v)-z,\beta(v)\rangle \leq 0\}$. Since $\beta(v)$ is the orthogonal projection of $0$ onto the convex set $F_v$, we have that $\langle \beta(v)-z,\beta(v)\rangle \leq 0$ for any $z \in F_v$. In other words, $F_v\subseteq K_{\beta(v)}$. Thus, Lemma \ref{lem: z in F_v} gives $z_v \in K_{\beta(v)}$. Also, Lemma \ref{lem: z_v small} implies $z_v \in B$. Consequently, $z_v \in B\cap K_{\beta(v)}$ and so Lemma \ref{lem: circle}, combined with the fact that $a\leq \frac12$, gives
		\[ \|\beta(v)-z_v\| \leq \|\beta(v)\|\sqrt{(1+4a)^2 -1} = \|\beta(v)\|\sqrt{8a + 16a^2}  \leq \|\beta(v)\|\sqrt{16a} = 4\sqrt{a} \|\beta(v)\|. \]
		Next, recall that Lemma \ref{lem: z in F_v} implies that $\|\beta(v)\| \leq \|z_v\| \leq (1+a)\|\beta(u)\|\leq 2\|\beta(u)\|$. Therefore,
		\begin{align*}
			\|\beta(v)-z_v\| \leq 4\sqrt{a}\|\beta(v)\| &\leq 8\sqrt{a} \|\beta(u)\| \leq  8\sqrt{\delta_v \|u-u_{NN}\|\|\beta(u)\|} \leq 8 \left (\frac{240\|u-u_{NN}\|}{\varepsilon }\right )^{1/2} \|u-v\|^{1/2}\\
			&\leq \frac{124 \|u-u_{NN}\|^{1/2}}{\varepsilon^{1/2}} \|u-v\|^{1/2}.\qedhere
		\end{align*}
	\end{proof}
	
	We now have everything we need to prove Lemma \ref{lem: holder}.
	
	\begin{proof}[Proof of Lemma \ref{lem: holder}]
		Recall that (\ref{eq: trivial case}) shows that without loss of generality we may assume $u\neq u_{NN}$. Consider $r_u$ and $\widetilde{U}$ as defined in (\ref{eq: r}) and let $v \in \widetilde{U}$ with $v\neq u$. Consider $\delta_v$ and $z_v$ defined in (\ref{eq: delta}). First, we study the case when
		\[ \|u-v\| \geq \frac{\varepsilon \|\beta(u)\|}{480},\]
		which implies that $\|\beta(u)\| \leq 480\varepsilon^{-1} \|u-v\|$. Recall that Lemma \ref{lem: z in F_v} shows that $z_v$ belongs to the feasible set $F_v$, whence $\|\beta(v)\|\leq \|z_v\|$. Therefore,
		\begin{align*} \|\beta(u)-\beta(v)\|&\leq \|\beta(u)\|+\|\beta(v)\| \leq \|\beta(u)\|+\|z_v\| \leq 2\|\beta(u)\| + \delta_v\|z^*\| \leq 2\|\beta(u)\| + \delta_v \|u-u_{NN}\| \\
			&\leq \frac{960\|u-v\|}{\varepsilon } + \frac{240 \|u-v\|}{\varepsilon} = \frac{1200}{\varepsilon} \|u-v\| \leq \frac{1200}{\varepsilon} \|u-v\|^{1/2}.
		\end{align*}
		Next, we assume that 
		\begin{equation*}
			\|u-v\| \leq \frac{\varepsilon \|\beta(u)\|}{480}.
		\end{equation*}
		In particular, $\beta(u)\neq 0$.
		By the triangle inequality we have
		\[\|\beta(u)-\beta(v)\|\leq \|\beta(u)-z_v\|+ \|z_v-\beta(v)\|. \]
		Recall that $\|\beta(u)-z_v\|$ was already bounded in (\ref{eq: |u'-zv|}). Moreover, $\|z_v-\beta(v)\|$ was bounded by Lemma \ref{lem: zv-v'}. We conclude that
		\[ \|\beta(u)-\beta(v)\| \leq \left (\frac{480}{\varepsilon} + \frac{128 \|u-u_{NN}\|^{1/2}}{\varepsilon^{1/2}} \right ) \|u-v\|^{1/2}.\qedhere\]
	\end{proof}
	
	\subsection{Proof of Theorem \ref{theo: main infinite}}
	\label{ss:proof_main_infinite}
	We start by proving Theorem \ref{theo: u' holder}. The strategy will be to apply the special case $w=u$ proved in Lemma~\ref{lem: holder}, together with the fact that the constants $\widetilde{C}_u$ obtained in such result behave in a Lipschitz manner.
	
	\begin{proof}[Proof Theorem \ref{theo: u' holder}]
		We divide the proof in two cases. Namely, $u \in \widetilde{X}\setminus \overline{X}$ and $u \in \overline{X}$.
		
		\paragraph{Case 1: $u \in \widetilde{X}\setminus \overline{X}$.} 
		Write $\widetilde{U}$ and $\widetilde{C}_u$ for the neighborhood of $u$ and the constant provided by Lemma \ref{lem: holder}. Thus, for any $v \in \widetilde{U}$ we have
		\[ \|\beta(v)-\beta(u)\|\leq \widetilde{C}_u \|v-u\|^{1/2}.\]
		As we discussed in Remark \ref{rem: constants for lemma}, for $u \notin \overline{X}$ we can take 
		\begin{equation}\label{eq: radius} r_u =\min\left\{1,\frac{\varepsilon \|u-u_{NN}\|}{480}\right\}, \quad \quad  \widetilde{U}=B(u,r_u) \cap \widetilde{X}, \quad \quad \widetilde{C}_u= \frac{1}{\varepsilon}\max\left \{1200, 480+128\|u-u_{NN}\|^{1/2}\right \}.
		\end{equation}
		Take $v$, $w \in \widetilde{U}$. We now distinguish two subcases. First, assume that 
		\[\|v-w\| \geq \frac{1}{3} \big{(}\|v-u\| + \|u-w\|\big{)}.\] 
		In this case, we have
        \begin{align*}\|\beta(v)-\beta(w)\|^2 &\leq \big{(}\|\beta(v)-\beta(u)\| + \|\beta(u)-\beta(w)\|\big{)}^2 \leq 2\big{(}\|\beta(v)-\beta(u)\|^2 + \|\beta(u)-\beta(w)\|^2\big{)}\\ &\leq 2\widetilde{C}_u^2\big{(}\|v-u\|+\|u-w\|\big{)} \leq 6\widetilde{C}_u^2\|v-w\|.
    \end{align*}
		Consequently,
		\[\|\beta(v)-\beta(w)\| \leq \sqrt{6} \widetilde{C}_u \|v-w\|^{1/2}.\]
		
		Next, assume that
		\[\|v-w\| < \frac{1}{3} \big{(}\|v-u\| + \|u-w\|\big{)}.\]
		In particular, we have $\|v-w\|< \frac{2}{3} r_u.$ We claim that $r_v\geq \frac{2}{3}r_u$ and $\widetilde{C}_v \leq \sqrt{\frac{3}{2}} \widetilde{C}_u$. If the claim holds, since $\|w-v\| < \frac{2}{3} r_u \leq r_v$, then Lemma \ref{lem: holder} applied to $v$ yields
		\[ \|\beta(v)-\beta(w)\| \leq \widetilde{C}_v\|v-w\|^{1/2} \leq \sqrt{\frac{3}{2}}\widetilde{C}_u\|v-w\|^{1/2}.\] 
		To prove the claim, notice that from (\ref{eq: u-u_NN Lipschitz}) and $\|u-v\|<r_u$ we obtain
		\[ \left |\|v-v_{NN}\| - \|u-u_{NN}\|\right | \leq \|(v-v_{NN}) - (u-u_{NN})\| \leq 3\|u-v\| \leq \frac{1}{160} \|u-u_{NN}\|. \]
		One can easily deduce from the above inequality that
		\begin{equation}\label{eq: u-u_NN vs v-v_NN}
			\frac{2}{3} \|u-u_{NN}\| \leq \|v-v_{NN}\| \leq \frac{3}{2} \|u-u_{NN}\|.
		\end{equation}
		Observe that, by definition of $r_u$, we have $\|u-v\|< \|u-u_{NN}\|$, which implies that $v \notin \overline{X}$. Therefore, $r_u$, $r_v$, $\widetilde{C}_u$, and $\widetilde{C}_v$ are given by (\ref{eq: radius}). Hence, we can use (\ref{eq: u-u_NN vs v-v_NN}) to prove the claim. Indeed, if $r_v =1$ then clearly $r_v \geq r_u$. Otherwise,
		\[ r_u=\min\left\{1,\frac{\varepsilon \|u-u_{NN}\|}{480}\right\} \leq \frac{\varepsilon \|u-u_{NN}\|}{480} \leq \frac{3}{2} \frac{\varepsilon \|v-v_{NN}\|}{480} =\frac{3}{2}r_v. \]
		Finally, note that
		\begin{align*} \widetilde{C}_v&= \frac{1}{\varepsilon}\max\left \{1200, 480+128\|v-v_{NN}\|^{1/2}\right \} \\
			&\leq \frac{1}{\varepsilon}\max\left \{1200, 480+128\sqrt{\frac{3}{2}}\|u-u_{NN}\|^{1/2}\right \} \\
			&\leq \sqrt{\frac{3}{2}} \cdot \frac{1}{\varepsilon}\max\left \{1200, 480+128\|u-u_{NN}\|^{1/2}\right \}= \sqrt{\frac{3}{2}} \widetilde{C}_u.
		\end{align*}
		
		\paragraph{Case 2: $u \in  \overline{X}$.} 
		In this case, we have $u=u_{NN}$. Let $U'=B(u,\frac{1}{2})\cap \widetilde{X}$ and fix $v$, $w \in U'$ with $w\neq v$. First, assume that
		\[ \|w-v\|< \frac{\varepsilon \|v-v_{NN}\|}{480}.\]
		In particular, $v\notin \overline{X}$. Therefore, Remark \ref{rem: constants for lemma} provides an explicit expression for the radius $r_v$ provided by Lemma \ref{lem: holder} when applied to $v$. In this case, $\|w-v\|\leq 1$ yields $\|w-v\|<r_v$ and Lemma \ref{lem: holder} gives
		\[ \|\beta(w)-\beta(v)\|\leq \widetilde{C}_v\|v-w\|^{1/2},\]
		where $\widetilde{C}_v$ is a constant also explicitly described in Remark \ref{rem: constants for lemma}. Note that $u=u_{NN}$ and (\ref{eq: u-u_NN Lipschitz}) imply $\|v-v_{NN}\|\leq 3\|u-v\| \leq 3/2$. Therefore $\widetilde{C}_v$ satisfies
		\[\widetilde{C}_v=\frac{1}{\varepsilon} \max\{ 1200, 480+128\|v-v_{NN}\|^{1/2}\}\leq \frac{1200}{\varepsilon}.\]
		Finally, let us study the case when
		\[ \|w-v\|\geq \frac{\varepsilon \|v-v_{NN}\|}{480}.\]
		Under this assumption, the triangle inequality, (\ref{eq: constraint_norm}), and (\ref{eq: u-u_NN Lipschitz}) give
		\begin{align*}
			\|\beta(w)-\beta(v)\|&\leq \|\beta(w)\|+\|\beta(v)\| \leq \|w-w_{NN}\|+ \|v-v_{NN}\|= \|w-w_{NN}\| - \|v-v_{NN}\| +2\|v-v_{NN}\|\\
			&\leq \|(w-w_{NN})-(v-v_{NN})\| + 2\|v-v_{NN}\| \leq 3\|w-v\| + 2\|v-v_{NN}\|\\
			&\leq \left (\frac{960}{\varepsilon} + 3\right ) \|w-v\| \leq  \frac{963}{\varepsilon} \|w-v\|^{1/2}.\qedhere
		\end{align*}
	\end{proof}
	
	\begin{remark}\label{rem: constants theo holder}
		The proof of Theorem \ref{theo: u' holder} provides explicit expressions for the neighborhood $U'$ and the constant $C'_u$. If $u \in \overline{X}$, then one can take $U'=B(u,1/2)\cap \widetilde{X}$, and $C'_u=1200/\varepsilon$. Let $\widetilde{U}$ and $\widetilde{C}_u$ be the neighborhood and constant discussed in Remark \ref{rem: constants for lemma}. For $u \notin \overline{X}$, one can take $U'=\widetilde{U}\cap\widetilde{X}$ and $C'_u=\sqrt{6}\widetilde{C}_u$.
	\end{remark}
	
	We are finally ready to prove Theorem \ref{theo: main infinite}.
	
	\begin{proof}[Proof of Theorem \ref{theo: main infinite}]
		Given $\varepsilon \in (0,1)$, consider the terminal embedding $f\colon \widetilde{X} \longrightarrow \mathbb{R}^m$ with distortion $\varepsilon$ provided by Lemma \ref{lem: is terminal}. Recall that $f$ is defined as
		\[f(u)=\begin{cases}
			\big{(}\Pi u_{NN} +u',\sqrt{\|u-u_{NN}\|^2 - \|u'\|^2} \big{)} & \mbox{if }  u \in \widetilde{X} \setminus\{\overline{X}\};\\
			\big{(}\Pi u,0\big{)} & \mbox{if } u \in \overline{X},
		\end{cases}\]
		where $\Pi \in \mathbb{R}^{d \times m}$ provides $\frac{\varepsilon}{60}$-convex hull distortion for $S_X$, $u_{NN}$ is the closest point from $\overline{X}$ to $u$, and $u'$ is the solution of the optimization problem \ref{optimization problem infinite}. Let $p\colon \widetilde{X} \longrightarrow \mathbb{R}$ defined by $p(u)=\|u-u_{NN}\|^2 - \|u'\|^2$. Then, for any points $v$, $w$ in $\widetilde{X}$ we have that
		\begin{align*}
			\|f(v)-f(w)\|^2&= \|\Pi v_{NN} - \Pi w_{NN} + \beta(v)-\beta(w)\|^2 + \left |\sqrt{p(v)}-\sqrt{p(w)}\right |^2 \\
			&\leq \big{(} \|\Pi v_{NN} - \Pi w_{NN}\|+ \|\beta(v)-\beta(w)\| \big{)}^2 + |p(v)-p(w)|\\
			&\leq 2\|\Pi v_{NN} - \Pi w_{NN}\|^2 + 2\|\beta(v)-\beta(w)\|^2 + |p(v)-p(w)|
		\end{align*}
		Using the embedding condition and (\ref{eq: u_NN Lipschitz}), we can bound
		\[\|\Pi v_{NN} - \Pi w_{NN}\| \leq (1+\varepsilon) \|v_{NN}-w_{NN}\| \leq 4 \|v-w\|.\]
		Moreover, Theorem \ref{theo: u' holder} shows that there are a radius $0<r_u<\frac{1}{2}$ and a constant $C'_u>0$ such that $\|\beta(v)-\beta(w)\| \leq C'_u \|v-w\|^{1/2}$ for any $v$, $w \in B(u,r_u)\cap \widetilde{X}$. Thus, it remains to bound $|p(v)-p(w)|$. First, by the triangle inequality we have
		\begin{align*}\big{|}\|\beta(v)\|^2 - \|\beta(w)\|^2\big{|}&=\big{(}\|\beta(v)\|+\|\beta(w)\|\big{)}\big{|}\|\beta(v)\|-\|\beta(w)\|\big{|} \leq \big{(}\|\beta(v)\|+\|\beta(w)\|\big{)}\|\beta(v)-\beta(w)\|\\
			&\leq \big{(}\|v-v_{NN}\| +\|w-w_{NN}\|\big{)} \|\beta(v)-\beta(w)\|\\
			&\leq C'_u \big{(}\|v-v_{NN}\| +\|w-w_{NN}\|\big{)}\|v-w\|^{1/2}.
		\end{align*}
		Similarly, the triangle inequality and (\ref{eq: u-u_NN Lipschitz}) give
		\begin{align*}
			\big{|}\|v-v_{NN}\|^2 -\|w-w_{NN}\|^2\big{|} &\leq \big{(}\|v-v_{NN}\| +\|w-w_{NN}\|\big{)}\|(v-v_{NN}) - (w-w_{NN})\|\\
			& \leq 3 \big{(}\|v-v_{NN}\| +\|w-w_{NN}\|\big{)} \|v-w\| \\
			&\leq 3 \big{(}\|v-v_{NN}\| +\|w-w_{NN}\|\big{)} \|v-w\|^{1/2}.
		\end{align*}
		Finally, notice that
		\[ \|v-v_{NN}\| \leq \|u-u_{NN}\| + \|(v-v_{NN})-(u-u_{NN})\| \leq \|u-u_{NN}\| + 3\|u-v\| \leq \|u-u_{NN}\|+3r_u,\]
		where the same inequality also holds for $w$. Consequently, we obtain
		\[ |p(v)-p(w)| \leq (3+C'_u)\big{(}\|v-v_{NN}\| +\|w-w_{NN}\|\big{)} \|v-w\|^{1/2} \leq (6+2C'_u)(\|u-u_{NN}\|+3r_u)\|v-w\|^{1/2}.\]
		We conclude that for any $v$, $w \in B(u,r_u)\cap \widetilde{X}$ we have
		\begin{align*}
			\|f(v)-f(w)\|^2 &\leq 2\|\Pi v_{NN} - \Pi w_{NN}\|^2 + 2\|\beta(v)-\beta(w)\|^2 + |p(v)-p(w)|\\
			&\leq 32\|v-w\|^2 + 2C'^{2}_u\|v-w\| + (6+2C'_u)(\|u-u_{NN}\|+3r_u)\|v-w\|^{1/2}\\
			&\leq \left (32+2C'^{2}_u + (6+2C'_u)(\|u-u_{NN}\| + 2) \right ) \|v-w\|^{1/2}.\qedhere
		\end{align*}
	\end{proof}
	
	\section*{Acknowledgements}

Simone Brugiapaglia acknowledges the support of the Natural Sciences and Engineering Research Council of Canada (NSERC) through grant RGPIN-2020-06766. Mark Iwen and Rafael Chiclana Vega were supported in part by NSF DMS 2106472. Tim Hoheisel was partially supported by a grant RGPIN-2024-04116 of the Natural Sciences and Engineering Research Council (NSERC) of Canada.  The authors are also grateful to the  {\em Centre de Recherches Math\'ematiques (CRM)}, in particular its Applied Mathematics Laboratory, for supporting Mark Iwen's visit in Summer 2022.

\end{document}